\newtheorem{theorem}{Theorem}[section]
\newtheorem{corollary}{Corollary}[section]
\newtheorem{lemma}{Lemma}[section]
\newtheorem{proposition}{Proposition}[section]
\newtheorem{definition}{Definition}[section]
\newtheorem{remark}{Remark}[section]
\theoremstyle{definition} \theoremstyle{remark}
\numberwithin{equation}{section}
\begin{document}

\markboth{Y.Z. Yang, Y. Zhou et al}{Time-space fractional Schr\"{o}dinger equation on $\mathbb{R}^{d}$}

\date{}

\baselineskip 0.22in

\title{\bf Local/global well-posedness analysis of time-space fractional Schr\"{o}dinger equation on $\mathbb{R}^{d}$}

\author{ Yong Zhen Yang$^1$, Yong Zhou$^{1,2}$\\[1.8mm]
\footnotesize {Correspondence: yozhou@must.edu.mo}\\
\footnotesize  {$^{1}$ Faculty of Mathematics and Computational Science, Xiangtan University}\\
\footnotesize  {Hunan 411105, P.R. China}\\[1.5mm]
\footnotesize {$^2$ Macao Centre for Mathematical Sciences, Macau University of Science and Technology}\\
\footnotesize {Macau 999078, P.R. China}\\[1.5mm]
}

\maketitle

\begin{abstract}
We investigate a class of nonlinear time-space fractional Schr\"{o}dinger equations with nonlocal effects in both time and space. The time derivative is of Achar type, and the space operator is a $\phi(-\Delta)$-type operator defined via a Bernstein function $\phi$. This nonlocality invalidates classical Strichartz estimates. By combining asymptotic analysis of Mittag-Leffler functions, the H\"{o}rmander multiplier theorem, and harmonic analysis techniques, we establish a Gagliardo-Nirenberg inequality in $\phi$-Triebel-Lizorkin spaces and derive key Sobolev estimates for the solution operator. These analyses yield the local and global well-posedness of the equations in appropriate Banach spaces. Our work demonstrates the effectiveness of the $\phi(-\Delta)$-framework for handling fractional dispersive equations with nonlocality.\\ [2mm]
{\bf MSC:} 26A33; 35R11.\\
{\bf Keywords:}time-space fractional Schr\"{o}dinger equations; global/local well-posedness; Sobolev estimates
\end{abstract}

\baselineskip 0.25in

\section{Introduction}

Over the past few decades, fractional calculus has been widely applied in various fields, including fluid mechanics, physics, materials science, signal processing and system identification, automatic control and robotics, electrochemistry, biology, and economics. Theoretical and numerical experiments have demonstrated that, compared to traditional derivatives, fractional derivatives, due to their non-local effects, are particularly suitable for characterizing physical processes in the real world with memory, hereditary, and global correlations. They offer advantages such as clear physical meaning of parameters and accurate descriptions of complex systems. For instance, fractional partial differential equations are used to describe viscoelastic mechanical models, electrical conduction in biological systems, electromagnetic wave propagation, fractional electrical circuits, fractional control systems and fractional controllers, and fluid flow in fractal media. For more details, see \cite{Kilbas,Zhou2014,Zhou2024,Podlubny}.

The Schr\"{o}dinger equation is one of the cornerstones of quantum mechanics, providing a mathematical framework for understanding the properties of atoms, molecules, and other microscopic particles. The time-fractional Schr\"{o}dinger equation was first introduced by Naber \cite{Naber}, who employed a core rotation technique to modify the exponent of $i$ to $i^{\alpha}$, resulting in the following equation:
$$
i^{\alpha}\partial^{\alpha}_{t}w(t,x)=-\Delta w(t,x),
$$
In this work, Naber investigated the time-dependent Hamiltonian characteristics of the time-fractional Schr\"{o}dinger equation and provided a physical interpretation, demonstrating that the fractionalization of $i$ yields more intuitive physical insights. However, Narahari Achar \cite{Achar} reached a contrasting conclusion, advocating for retaining the standard $i$:
$$
i\partial^{\alpha}_{t}w(t,x)=-\Delta w(t,x).
$$
Moreover,  Laskin \cite{Laskin} derived the space-fractional Schr\"{o}dinger equation by replacing Brownian paths with L\'{e}vy stable paths in the Feynman path integral framework. Notably, the Markov property of the solution remains preserved in this formulation. For space fractional operators, a prominent example is the fractional Laplacian $(-\Delta)^{\frac{\beta}{2}}$,$0<\beta<2$, which holds significant research value in both mathematics and finance. Professor L. Caffarelli, a member of the U.S. National Academy of Sciences and Wolf Prize laureate, made groundbreaking contributions in \cite[\emph{Ann. Math.}, \emph{Invent. Math.}]{Caffarelli,Caffarelli1} by using the unique property of $(-\Delta)^{\frac{\beta}{2}}$ that maps Dirichlet boundary conditions to Neumann-type boundary conditions. His work resolved critical issues such as maximal regularity and free boundary regularity for fractional Laplacian equations, garnering widespread acclaim in the academic community. Consequently, in recent years, studies on the nonlinear time-space fractional Schr\"{o}dinger equation which incorporates both time and space fractional derivatives have attracted considerable attention. Key research directions include well-posedness, dispersion estimates, and decay properties, as evidenced by works such as \cite{Banquet,Grande,Lee,Su,Su1}.

In this paper, We study the space-time fractional Schr\"{o}dinger equation governed by the operator \( \phi(-\Delta) \):
\begin{align}\label{E.Q.T.F.S.E}
\begin{cases}
i\partial_t^\alpha w = \phi(-\Delta)w + g(w) & \text{in } (0,\infty) \times \mathbb{R}^d, \\
w(0,\cdot) = w_0 & \text{in } \mathbb{R}^d,
\end{cases}
\end{align}
where \( \partial_t^\alpha \) denotes the Caputo fractional derivative (\( 0 < \alpha < 1 \)) and \( g \) is a prescribed nonlinearity, and $\phi$ is a Bernstein function satisfying $\phi(0^{+}) = 0$, that is, $\phi: (0,\infty) \rightarrow (0,\infty)$ and
\[
(-1)^{k+1}\phi^{(k)}(x) \geq 0, \quad x > 0, \quad k = 0, 1, 2, \ldots
\]
The operator $\phi(\Delta):=-\phi(-\Delta)$ is the quantization of rotationally invariant subordinate Brownian motion with characteristic exponent $\phi(|\xi|^{2})$, and it can be defined as
\[
\phi(\Delta)w(x) = \mathcal{F}^{-1}\left(-\phi(|\xi|^{2})\mathcal{F}w(\xi)\right)(x), \quad w \in \mathcal{S}(\mathbb{R}^{d}).
\]
In particular, when $\phi(\lambda) = \lambda^{\frac{\beta}{2}}$ with $\beta \in (0,2)$, the operator $\phi(-\Delta)$  reduces to the fractional Laplacian $(-\Delta)^{\frac{\beta}{2}}$.In this case, equation \eqref{E.Q.T.F.S.E}becomes:
\begin{align}\label{E.Q.T.F.S.E.1}
\begin{cases}
    i\partial_{t}^{\alpha}w(t,x) = (-\Delta)^{\frac{\beta}{2}}w(t,x) + g(w(t,x)) & \text{in } (0,\infty) \times \mathbb{R}^{d}, \\
    w(0,x) = w_{0}(x) & \text{in } \mathbb{R}^{d},
\end{cases}
\end{align}
Su \cite{Su} study the local well-posedness of equation \eqref{E.Q.T.F.S.E.1} in a suitable Banach space.Notably, due to the lack of compactness in the operator $\exp(-it(-\Delta)^{\frac{\beta}{2}})$and the absence of decay in its symbol, the subordination principle \cite{Zhou2014} cannot be directly applied to establish the compactness of the solution operator.Furthermore, $L^{p}-L^{r}$ estimates analogous to those for fractional diffusion equations \cite{Zhou2014,Zhou2024} are unavailable. Therefore, the study of the time-space fractional Schr\"{o}dinger equation using the subordination principle has certain limitations. To circumvent these limitations, Su \cite{Su} employed the Mellin transform and asymptotic analysis of the H-Fox function---defined via a Mellin-Barnes integral---to characterize solution operator properties. This approach enabled the derivation of decay estimates similar to those for fractional semilinear heat equations, thereby establishing the local well-posedness of mild solutions in an appropriate Banach space.

However, Su \cite{Su} method heavily relies on the homogeneity of the symbol associated with the operator. Specifically, it depends on the Fourier transform of homogeneous distributions \cite{Grafakos}, that is
$$
\mathcal{F}(|x|^{z})=\frac{\Gamma(\frac{z+d}{2})}{\Gamma(\frac{-z}{2})}|\xi|^{-z-d},
$$
which fails for operators with non-homogeneous symbols, such as general $\phi(-\Delta)$. Thus, extending the techniques used for equation \eqref{E.Q.T.F.S.E.1} to equation \eqref{E.Q.T.F.S.E} requires novel methodologies. To address this challenge, we first construct embedding theorems for $\phi$-type Besov space and $\phi$-type Triebel-Lizorkin space, building upon the framework introduced by Mikulevi\v{c}ius \cite[\emph{Potential Anal.}]{Mikulevicius}. Subsequently, we establish a Gagliardo-Nirenberg inequality in these $\phi$-type Triebel-Lizorkin space, which plays a pivotal role in deriving $L^{p}-L^{r}$ estimates for the solution operator. Next, by using the asymptotic properties of the Mittag-Leffler function (Proposition \ref{X.Z.of Mitag function}) as derived by Gorenflo \cite{Gorenflo}, combined with tools from harmonic analysis, including the H\"{o}rmander multiplier theorem, Gagliardo-Nirenberg inequality, and real interpolation techniques, we obtain $L^{p}-L^{r}$ estimates for the solution operator. This allows us to establish the local/global well-posedness and asymptotic behavior of mild solutions to E.q. \eqref{E.Q.T.F.S.E}. Crucially, our approach does not require homogeneity in the symbol of the spatial operator, thereby complementing the results of Su \cite{Su} and offering a distinct methodology.

This paper is organized as follows. In Section 2, we introduce some of the notations required for this paper, including fractional derivatives, the $\phi(\Delta)$-type operator and $\phi$-Besov space, $\phi$-Triebel-Lizorkin space etc. In Section 3, we construct the the Gagliardo-Nirenberg inequality in the $\phi$-Triebel-Lizorkin space and obtain some Sobolev estimates for the solution operator. In Section 4, we establish the global/local well-posedness of the E.q.\eqref{E.Q.T.F.S.E} and analyze the asymptotic behavior.
\section{Preliminaries}
In this section, we introduce definitions, notations, and key lemmas used throughout the paper.

We denote by \( C \) a generic constant that may vary from line to line in the formulas. The symbols \( a \vee b \) and \( a \wedge b \) represent the maximum and minimum of \( a \) and \( b \), respectively. We write \( a \lesssim b \) if \( a \leq Cb \) for some constant \( C > 0 \), and \( a \sim b \) if there exist positive constants \( C_1, C_2 \) such that \( C_1 b \leq a \leq C_2 b \).
Let $\mathcal{S}(\mathbb{R}^d)$ denote the Schwartz space of rapidly decaying smooth functions on $\mathbb{R}^d$, with its dual space $\mathcal{S}'(\mathbb{R}^d)$ consisting of tempered distributions. We further define the restricted Schwartz space
$$
\mathcal{\dot{S}}(\mathbb{R}^{d}) = \left\{ f \in \mathcal{S} : \partial^{\gamma}f(0) = 0 \text{ for all multi-indices } \gamma \in \mathbb{N}^d \right\},
$$
whose dual space corresponds to the quotient space $\mathcal{S}'/\mathcal{P}$, where $\mathcal{P}$ denotes polynomial functions.

The integral transforms are defined as follows: For any $f \in \mathcal{S}(\mathbb{R}^d)$, the Fourier transform pair is given by
\[
\mathcal{F}f(\xi) = \int_{\mathbb{R}^d} e^{-i x \cdot \xi} f(x) dx, \quad
\mathcal{F}^{-1}f(x) = \frac{1}{(2\pi)^{d/2}} \int_{\mathbb{R}^d} e^{i x \cdot \xi} f(\xi) d\xi,
\]
with $\mathcal{L}$ and $\mathcal{L}^{-1}$ denoting the Laplace transform and its inverse respectively.
By using the dual method, we can extend the $\mathcal{F}$ and $\mathcal{F}^{-1}$ to the $\mathcal{S}'$, that is for $f\in\mathcal{S}'$
$$
\langle\mathcal{F}f,g\rangle=\langle f,\mathcal{F}g\rangle,\quad\langle\mathcal{F}^{-1}f,g\rangle=\langle f,\mathcal{F}^{-1}g\rangle
\text{ for any }g\in\mathcal{S}(\mathbb{R}^d).
$$
We denote that $*$ and $\star$ represent the convolution to the variable of $t$ and $x$, respectively. Moreover,
for a measurable function $f \in \mathcal{M}(\mathbb{R}^d)$ with polynomially bounded growth at infinity, the operator $f(D)$ (where $D = i\partial_x$) is defined by Fourier duality as:
\[
f(D)a \, := \, \mathcal{F}^{-1}\big[\,f(\xi) \cdot (\mathcal{F}a)(\xi)\,\big].
\]

\begin{definition}
For a function $g\in L^{1}\big(0,\infty;\mathcal{S}(\mathbb{R}^{d})\big)$, $0<\alpha<1$, the Riemann-Liouville fractional integral and Caputo fractional derivative of $g$ can be defined that
\begin{align*}
_{0}J_{t}^{\alpha}\!g(t,x)&=g_{\alpha}(t)*g(t,x),\\
\partial^{\alpha}_{t}g(t,x)&=\frac{d}{dt}\left(g_{1-\alpha}(t)*(g(t,x)-g(0,x))\right)
\end{align*}
where $g_{\alpha}(t)=t^{\alpha}/\Gamma(\alpha)$.
\end{definition}
Next, we introduce the Mittag-Leffler function $E_{\alpha,\beta}(z)$, which is defined as follow:
\begin{equation*}
E_{\alpha,\beta}(\rho) = \sum_{n=0}^{\infty} \frac{\rho^{n}}{\Gamma(\alpha n + \beta)}, \qquad \text{for }\alpha, \beta > 0, \text{ and }\rho \in \mathbb{C},
\end{equation*}

The Mittag-Leffler function has the following properities, can refer to \cite{Kilbas,Gorenflo}.

\begin{proposition}\label{X.Z.of Mitag function}
For the Mittag-Leffler function \( E_{\alpha,\beta}(z) \), the following properties hold:

\begin{enumerate}[\rm(i)]
   \item The following Laplace transform equation holds:
    \begin{equation}\label{M.T.F.Laplace}
    \int_{0}^{\infty} e^{-st} t^{\beta-1} E_{\alpha,\beta}(\pm a t^{\alpha}) \, dt = \frac{s^{\alpha-\beta}}{s^{\alpha} \mp a} \quad \text{for} \quad \Re(s) > 0, \, a \in \mathbb{C}, \, |s^{-\alpha} a| < 1.
    \end{equation}
    \item If \( 0 < \alpha < 1\), \( \beta < 1 + \alpha \), \( z \neq 0 \), and \( |\arg(z)|<\alpha\pi\), then
    \begin{equation}\label{asbe1}
    E_{\alpha,\beta}(z) = \frac{1}{\alpha} z^{\frac{1-\beta}{\alpha}} \exp\left(z^{\frac{1}{\alpha}}\right)
    + \int_{0}^{\infty} \frac{1}{\pi \alpha} r^{\frac{1-\beta}{\alpha}} \exp\left(-r^{\frac{1}{\alpha}}\right)
    \frac{r \sin(\pi(1-\beta)) - z \sin(\pi(1-\beta + \alpha))}{r^2 - 2rz \cos(\pi \alpha) + z^2} \, dr.
    \end{equation}
    \item If $0<\alpha<1$, $\beta<1+\alpha$, $z \neq 0$, and $|\arg(z)|>\alpha\pi$, then
    \begin{align}\label{asbe2}
    E_{\alpha,\beta}(z) = \int_{0}^{\infty} \frac{1}{\pi \alpha} r^{\frac{1-\beta}{\alpha}} \exp\left(-r^{\frac{1}{\alpha}}\right)
    \frac{r \sin(\pi(1-\beta)) - z \sin(\pi(1-\beta + \alpha))}{r^2 - 2rz \cos(\pi \alpha) + z^2} \, dr.
    \end{align}
\end{enumerate}
\end{proposition}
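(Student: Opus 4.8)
The plan is to handle part (i) by direct termwise Laplace transformation of the defining series, and parts (ii)--(iii) by a Hankel contour representation combined with the residue theorem, where the dichotomy in $|\arg z|$ is exactly what separates the two formulas. For \eqref{M.T.F.Laplace} I would insert the series $E_{\alpha,\beta}(\pm a t^{\alpha}) = \sum_{n\ge 0}\frac{(\pm a)^{n} t^{\alpha n}}{\Gamma(\alpha n+\beta)}$, multiply by $t^{\beta-1}e^{-st}$, and integrate term by term using the elementary transform $\int_{0}^{\infty} e^{-st}t^{\alpha n+\beta-1}\,dt = \Gamma(\alpha n+\beta)\,s^{-(\alpha n+\beta)}$. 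The gamma factors cancel, leaving $s^{-\beta}\sum_{n\ge 0}(\pm a s^{-\alpha})^{n}$, a geometric series summing to $s^{-\beta}(1\mp a s^{-\alpha})^{-1}=s^{\alpha-\beta}/(s^{\alpha}\mp a)$. Both the interchange of sum and integral and the convergence of the geometric series are licensed precisely by the hypothesis $|s^{-\alpha}a|<1$ together with $\Re(s)>0$, via Fubini and dominated convergence.

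For \eqref{asbe1}--\eqref{asbe2} I would first establish the Hankel-type representation
\[
E_{\alpha,\beta}(z) = \frac{1}{2\pi i\,\alpha}\int_{Ha}\frac{e^{\zeta^{1/\alpha}}\,\zeta^{(1-\beta)/\alpha}}{\zeta-z}\,d\zeta,
\]
obtained by substituting Hankel's integral $1/\Gamma(\alpha n+\beta)=\frac{1}{2\pi i}\int e^{u} u^{-(\alpha n+\beta)}\,du$ into the series, summing the resulting geometric series in $u$, and changing variables $u=\zeta^{1/\alpha}$; this maps the rays $\arg u=\pm\pi$ to rays $\arg\zeta=\pm\alpha\pi$, so $Ha$ consists of these two rays joined by a small arc $|\zeta|=\epsilon$. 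The integrand has a single simple pole at $\zeta=z$ with residue $\frac{1}{\alpha}z^{(1-\beta)/\alpha}e^{z^{1/\alpha}}$, and this pole lies in the sector $|\arg\zeta|<\alpha\pi$ enclosed by the contour if and only if $|\arg z|<\alpha\pi$. Applying the residue theorem therefore produces the extra term $\frac{1}{\alpha}z^{(1-\beta)/\alpha}\exp(z^{1/\alpha})$ exactly in the regime \eqref{asbe1}, and no such term in \eqref{asbe2}.

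It then remains to evaluate the contour integral itself, which gives the common integral term in both formulas. Letting $\epsilon\to 0$ (the arc contributes nothing, since $\beta<1+\alpha$ yields $(1-\beta)/\alpha>-1$), I would parametrize the rays by $\zeta=re^{\pm i\alpha\pi}$, on which $\zeta^{1/\alpha}=-r^{1/\alpha}$ and $\zeta^{(1-\beta)/\alpha}=r^{(1-\beta)/\alpha}e^{\pm i\pi(1-\beta)}$. Adding the two conjugate ray contributions, putting them over the common denominator $(r-ze^{i\alpha\pi})(r-ze^{-i\alpha\pi})=r^{2}-2rz\cos(\pi\alpha)+z^{2}$, and collapsing the numerator via $e^{i\theta}-e^{-i\theta}=2i\sin\theta$, I obtain precisely
\[
\frac{1}{\pi\alpha}\int_{0}^{\infty} r^{(1-\beta)/\alpha}e^{-r^{1/\alpha}}\,\frac{r\sin(\pi(1-\beta))-z\sin(\pi(1-\beta+\alpha))}{r^{2}-2rz\cos(\pi\alpha)+z^{2}}\,dr,
\]
which matches the integral appearing in \eqref{asbe1} and \eqref{asbe2}.

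The main obstacle is the rigorous justification of the contour apparatus rather than any single computation: one must verify the Hankel representation (convergence and the sum--integral interchange, then analytic continuation beyond the disk $|z|<\epsilon$ where the geometric series converges), choose the opening angle of the rays so that $z$ is genuinely separated from the contour, and control the orientation so that both the residue and the ray integral enter with the correct sign. One must also confirm that the decay $e^{-r^{1/\alpha}}$ renders the ray integrals absolutely convergent at infinity while $(1-\beta)/\alpha>-1$ secures integrability at $r=0$. The most delicate point is the angular condition: the boundary case $|\arg z|=\alpha\pi$ must be assigned to \eqref{asbe2}, reflecting that the pole then sits exactly on the limiting direction of the contour and contributes no residue.
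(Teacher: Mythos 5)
The paper itself contains no proof of Proposition \ref{X.Z.of Mitag function}: it quotes the result from the cited monographs of Kilbas et al.\ and Gorenflo et al., and the arguments in those sources are exactly the ones you reconstruct --- termwise Laplace transformation of the defining series for (i), and a Hankel-contour representation plus the residue theorem for (ii)--(iii). Your core computations are correct: the representation $E_{\alpha,\beta}(z)=\frac{1}{2\pi i\alpha}\int_{Ha}e^{\zeta^{1/\alpha}}\zeta^{(1-\beta)/\alpha}(\zeta-z)^{-1}d\zeta$, the residue $\frac{1}{\alpha}z^{(1-\beta)/\alpha}e^{z^{1/\alpha}}$ at $\zeta=z$, the criterion that the pole is enclosed precisely when $|\arg z|<\alpha\pi$, the recombination of the two rays $\zeta=re^{\pm i\alpha\pi}$ into the stated kernel, and the role of $\beta<1+\alpha$ in killing the small arc. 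One small repair in (i): Fubini--Tonelli applies directly only when $|a|<(\Re s)^{\alpha}$, since the absolute integrals sum to $(\Re s)^{-\beta}\sum_{n}\big(|a|(\Re s)^{-\alpha}\big)^{n}$; the stated range $|as^{-\alpha}|<1$, i.e.\ $|a|<|s|^{\alpha}$, is then reached by analytic continuation in $a$ (both sides are analytic on that disc). This is a one-line addition, but it should be said.

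The genuine gap is your handling of the boundary case $|\arg z|=\alpha\pi$, which the statement of (iii) includes through the inequality $|\arg(z)|\geq\alpha\pi$. You assert that the pole then ``sits exactly on the limiting direction of the contour and contributes no residue,'' so that \eqref{asbe2} still holds; in fact \eqref{asbe2} is ill-defined there. If $\arg z=\pm\alpha\pi$, the denominator factors as $(r-ze^{i\alpha\pi})(r-ze^{-i\alpha\pi})$ and the factor $r-ze^{\mp i\alpha\pi}=r-|z|$ vanishes at $r=|z|$, while the numerator at $r=|z|$ equals $-|z|\sin(\alpha\pi)e^{\pm i\pi(1-\beta+\alpha)}$ (use $\sin A-e^{\pm iB}\sin(A+B)=-\sin B\,e^{\pm i(A+B)}$ with $A=\pi(1-\beta)$, $B=\alpha\pi$), whose modulus $|z|\sin(\alpha\pi)$ is nonzero for $0<\alpha<1$. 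So the integrand has a simple, non-integrable pole on the path of integration and the integral diverges; no indentation of the contour rescues the formula as written, since indenting produces a principal value plus one half of the residue term, not the clean formula \eqref{asbe2}. What your argument actually proves is (ii) for $|\arg z|<\alpha\pi$ and (iii) for $|\arg z|>\alpha\pi$, and you should state it that way, flagging equality as requiring a separate principal-value analysis. Note that this defect is inherited from the paper's statement rather than introduced by you: the paper's own Remark \ref{take value E.q.} applies (iii) with $\alpha\in(0,\frac{1}{2}]$ to $z=-it^{\alpha}\phi(|\xi|^{2})$, and at $\alpha=\frac{1}{2}$ this is exactly the boundary case, where the displayed integral (denominator $r^{2}-t^{2\alpha}\phi(|\xi|^{2})^{2}$) likewise diverges.
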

\begin{remark}\label{take value E.q.}
In particular, for $z=-it^{\alpha}\phi(|\xi|^{2})$, we have $|\arg(z)|=\pi/2$. Thus we obtain that
\begin{align*}
E_{\alpha,1}(-it^{\alpha}\phi(|\xi|^{2}))&=\frac{i\sin(\alpha\pi)}{\alpha\pi}\int_{0}^{\infty}
\frac{e^{-r^{\frac{1}{\alpha}}}t^{\alpha}\phi(|\xi|^{2})}{r^{2}
+2it^{\alpha}\phi(|\xi|^{2})r\cos(\alpha\pi)-t^{2\alpha}(\phi(|\xi|^{2}))^{2}}dr,\text{ if }\alpha\in\big(0,\frac{1}{2}\big),\\
E_{\alpha,1}(-it^{\alpha}\phi(|\xi|^{2}))&=\frac{1}{\alpha}
\exp\big(\cos(\frac{\pi}{2\alpha})t\phi(|\xi|^{2})^{\frac{1}{\alpha}}
-i\sin(\frac{\pi}{2\alpha})t\phi(|\xi|^{2})^{\frac{1}{\alpha}}\big)\\
 &\quad+\frac{i\sin(\alpha\pi)}{\alpha\pi}\int_{0}^{\infty}
\frac{e^{-r^{\frac{1}{\alpha}}}t^{\alpha}\phi(|\xi|^{2})}{r^{2}
+2it^{\alpha}\phi(|\xi|^{2})r\cos(\alpha\pi)-t^{2\alpha}(\phi(|\xi|^{2}))^{2}}dr,\text{ if }\alpha\in\big(\frac{1}{2},1\big),
\end{align*}
\text{and}
\begin{align*}
E_{\alpha,\alpha}(-it^{\alpha}\phi(|\xi|^{2}))&=\frac{\sin((1-\alpha)\pi)}{\alpha\pi}\int_{0}^{\infty}
\frac{r^{\frac{1}{\alpha}}e^{-r^{\frac{1}{\alpha}}}}{r^{2}
+2it^{\alpha}\phi(|\xi|^{2})r\cos(\alpha\pi)-t^{2\alpha}(\phi(|\xi|^{2}))^{2}}dr,\text{ if }\alpha\in\big(0,\frac{1}{2}\big),\\
E_{\alpha,\alpha}(-it^{\alpha}\phi(|\xi|^{2}))&=\frac{1}{\alpha}\exp\big(-\frac{i\pi}{2}\frac{1-\alpha}{\alpha}\big)
t^{1-\alpha}(\phi(|\xi|^{2}))^{\frac{1-\alpha}{\alpha}}
\exp\big(e^{-i\frac{\pi}{2\alpha}}t\phi(|\xi|^{2})^{\frac{1}{\alpha}}\big)\\
 &\quad+\frac{\sin((1-\alpha)\pi)}{\alpha\pi}\int_{0}^{\infty}
\frac{r^{\frac{1}{\alpha}}e^{-r^{\frac{1}{\alpha}}}}{r^{2}
+2it^{\alpha}\phi(|\xi|^{2})r\cos(\alpha\pi)-t^{2\alpha}(\phi(|\xi|^{2}))^{2}}dr,\text{ if }\alpha\in\big(\frac{1}{2},1\big).
\end{align*}
\end{remark}

\begin{remark}
	In particular, for $\alpha=\frac{1}{2}$, the asymptotic integral expansions of the Mittag-Leffler functions 
	$E_{\alpha,1}(-it^{\alpha}\phi(|\xi|^{2}))$ and $E_{\alpha,\alpha}(-it^{\alpha}\phi(|\xi|^{2}))$ in Remark \ref{take value E.q.} 
	are no longer valid. Indeed, the integral expansion must then be interpreted in the sense of the Cauchy principal value, i.e.
	$$
	P.V.\int_{0}^{\infty}\frac{e^{-r^{2}}t^{\frac{1}{2}}\phi(|\xi|^{2})}{r^{2}-t(\phi(|\xi|^{2}))^{2}}\,dr=
	\lim_{\varepsilon\rightarrow 0^{+}}\int_{|r-t^{\frac{1}{2}}\phi(|\xi|^{2})|\geq \varepsilon}
	\frac{e^{-r^{2}}t^{\frac{1}{2}}\phi(|\xi|^{2})}{r^{2}-t(\phi(|\xi|^{2}))^{2}}\,dr.
	$$
	Here, following \cite{Kilbas}, we employ the error function $\operatorname{erf}(z)$ and the complementary error function $\operatorname{erfc}(z)$ to obtain the closed forms
	\begin{align*}
		E_{\frac{1}{2},1}(z)&=e^{z^{2}}\big(1+\operatorname{erf}(z)\big)=e^{z^{2}}\big(\operatorname{erfc}(-z)\big),\qquad z\in\mathbb{C},\\
		E_{\frac{1}{2},\frac{1}{2}}(z)&=\frac{1}{\sqrt{\pi}}+ze^{z^{2}}\big(1+\operatorname{erf}(z)\big)=\frac{1}{\sqrt{\pi}}+ze^{z^{2}}\big(\operatorname{erfc}(-z)\big),\qquad z\in\mathbb{C},
	\end{align*}
	where $\operatorname{erf}(z)$ and $\operatorname{erfc}(z)$ are defined by
	$$
	\operatorname{erf}(z)=\frac{2}{\sqrt{\pi}}\int_{0}^{z}e^{-t^{2}}\,dt,\qquad 
	\operatorname{erfc}(z)=1-\operatorname{erf}(z)=\frac{2}{\sqrt{\pi}}\int_{z}^{\infty}e^{-t^{2}}\,dt.
	$$
	
	Furthermore, for $z=ix$, we define the imaginary error function $\operatorname{erfi}(x)$,
	$$
	\operatorname{erfi}(x)=-i\operatorname{erf}(ix)=\frac{2}{\sqrt{\pi}}\int_{0}^{x}e^{t^{2}}\,dt.
	$$
	As $x\to\infty$, $\operatorname{erfi}(x)$ admits the asymptotic expansion
	$$
	\operatorname{erfi}(x)\sim\frac{e^{x^{2}}}{\sqrt{\pi}x}\sum_{k=0}^{\infty}\frac{(2k-1)!!}{(2x^{2})^{k}},\qquad x\to\infty.
	$$
	Retaining the leading terms gives
	$$
	\operatorname{erfi}(x)\sim\frac{e^{x^{2}}}{\sqrt{\pi}x}\left(1+\frac{1}{2x^{2}}+\frac{3}{4x^{3}}+\frac{15}{8x^{6}}+O\!\left(\frac{1}{x^{8}}\right)\right),\qquad x\to\infty.
	$$
	Moreover, $\operatorname{erfi}(x)$ also admits the closed form
	$$
	\operatorname{erfi}(x)=\frac{e^{x^{2}}}{x\sqrt{\pi}}+R(x),\qquad 
	R(x)=-\frac{1}{x\sqrt{\pi}}+\frac{1}{\sqrt{\pi}}\int_{0}^{x}\frac{e^{t^{2}}-1}{t^{2}}\,dt.
	$$
	By L'Hôpital's rule, we observe that
	$$
	\lim_{x\to\infty}\frac{x\int_{0}^{x}\frac{e^{t^{2}}-1}{t^{2}}\,dt}{e^{x^{2}}}
	=\lim_{x\to\infty}\frac{\int_{0}^{x}\frac{e^{t^{2}}-1}{t^{2}}\,dt+\frac{e^{x^{2}}-1}{x}}{2xe^{x^{2}}}
	=\lim_{x\to\infty}\frac{\frac{e^{x^{2}}-1}{x}}{2xe^{x^{2}}}
	=\lim_{x\to\infty}\frac{1}{2x^{2}}=0,
	$$
	which implies $\displaystyle\lim_{x\to\infty}xe^{-x^{2}}R(x)=0$.
	In particular, $\operatorname{erfi}(x)\approx\frac{e^{x^{2}}}{\sqrt{\pi}x}$ for $x\gg 1$.
	
	Therefore, for $\alpha=1/2$, setting $a=t^{\frac{1}{2}}\phi(|\xi|^{2})$, we obtain
	\begin{align*}
		E_{\frac{1}{2},1}(-ia)&=e^{-a^{2}}\operatorname{erfc}(ia)=e^{-a^{2}}\big(1-i\operatorname{erfi}(a)\big),\\
		E_{\frac{1}{2},\frac{1}{2}}(-ia)&=\frac{1}{\sqrt{\pi}}-iae^{-a^{2}}\big(1-i\operatorname{erfi}(a)\big)=\frac{1}{\sqrt{\pi}}-iae^{-a^{2}}-ae^{-a^{2}}\operatorname{erfi}(a).
	\end{align*}
\end{remark}

Next, we introduce some facts about the Bernstein function $\phi$ and the operator $\phi(-\Delta)$. For more details, refer to \cite{Kim1,Kim2}.

For a function $\phi: (0, \infty) \rightarrow (0, \infty)$ satisfying $\phi(0^+) = 0$, if there exists a constant $a \geq 0$ such that
$$
\phi(x) = a x + \int_{0}^{\infty} \left(1 - e^{-t x}\right) \mathcal{M}(dt),
$$
where the L\'{e}vy measure $\mathcal{M}$ satisfies
$$
\int_{0}^{\infty} (1 \wedge t) \mathcal{M}(dt) < \infty,
$$
then $\phi$ is called a Bernstein function. Let $\phi$ be a Bernstein function associated with a non-negative L\'{e}vy process $S_t$ (subordinator) on a probability space $(\Omega, \mathcal{Z}, \mathbb{P})$, characterized by its Laplace transform:
\[
\mathbb{E}[e^{-\lambda S_t}] = \exp(-t\phi(\lambda)), \quad \lambda \geq 0.
\]

Given a $d$-dimensional Brownian motion $X_t$ independent of $S_t$, we define the subordinate Brownian motion $W_t := X_{S_t}$. The infinitesimal generator of this process is given by the operator $\phi(\Delta)$, satisfying for $g \in \mathcal{S}(\mathbb{R}^d)$:
\[
\phi(\Delta)g(x) = \lim_{t\to 0^+} \frac{\mathbb{E}[g(x+W_t)] - g(x)}{t}.
\]
Moreover, the operator $\phi(\Delta)$ is an integro-differential operator defined as
$$
\phi(\Delta) g(x) = a \Delta g(x) + \int_{\mathbb{R}^d} \left(g(x + y) - g(x) - \nabla g(x) \cdot y I_{|y| \leq 1}\right) j(|y|) dy, \quad g \in \mathcal{S}(\mathbb{R}^d),
$$
where
$$
j(|y|) = \int_{(0, \infty)} \frac{1}{(4 \pi s)^{\frac{d}{2}}} \exp\left(-\frac{|y|^2}{4 s}\right) \mathcal{M}(ds),
$$
and it can also be defined as
$$
\phi(\Delta) g(x) = -\mathcal{F}^{-1}\left(\phi(|\xi|^2) \mathcal{F} g(\xi)\right)(x), \quad g \in \mathcal{S}(\mathbb{R}^d).
$$

For the Bernstein function $\phi$, it is easy to see that
\begin{align}\label{B.S.T,func.bdd}
|x^n \phi^{(n)}(x)| \leq a I_{n=1} + \int_{0}^{\infty} (t x)^n e^{-t x} \mathcal{M}(dt) \lesssim \phi(x).
\end{align}

Next, we present the following assumption for the Bernstein function, which is taken from \cite{Kim1,Kim2}.

\textbf{Assumption H1} There exists $\delta \in (0, 1]$ such that
\begin{align}\label{S.C.condition}
c \left(\frac{K}{k}\right)^{\delta} \leq \frac{\phi(K)}{\phi(k)}, \quad 0 < k < K < \infty.
\end{align}

The above assumption is reasonable, and there exist some Bernstein functions satisfying the condition:
\begin{enumerate}[\rm(1)]
  \item $\phi(x) = x^{\gamma}$, $\gamma \in (0, 1]$.
  \item $\phi(x) = x^{\gamma_1} + x^{\gamma_2}$, $0 < \gamma_1, \gamma_2 \leq 1$.
  \item $\phi(x) = \left(x + m^{\frac{1}{\gamma}}\right)^{\gamma} - m$, $\gamma \in (0, 1)$, $m > 0$.
  \item $\phi(x) = \frac{x}{\log(1 + x^{\frac{\beta}{2}})}$, $\beta \in (0, 2)$.
\end{enumerate}

Next, we introduce some Littlewood-Paley theory. For more details, please refer to \cite{Wang,Bahouri,Grafakos,Triebel}.

Let $\varphi \in \mathcal{S}$, and $\hat{\varphi}$ be supported in $\{\xi: \frac{1}{2} < |\xi| < 2\}$. Define
$$
\hat{\varphi}_{j}(\xi) = \varphi(2^{-j}\xi), \quad j = 0, \pm 1, \pm 2, \ldots,
$$
satisfying
$$
\sum_{j \in \mathbb{Z}} \varphi_{j}(\xi) = 1, \quad \xi \neq 0.
$$
Let $\hat{\chi} = 1 - \sum_{j=1}^{\infty} \hat{\varphi}_{j}(\xi)$, then $\text{supp}\, \hat{\chi} \subset B(0,1)$. We denote
$$
\Delta_{j}g(x) = \varphi_{j} \star g, \quad j = 0, \pm 1, \pm 2, \ldots.
$$

For $s \in \mathbb{R}$ and $1 \leq p, q \leq \infty$, we can define the standard Besov space $B^{s}_{p,q}(\mathbb{R}^{d})$ and Triebel-Lizorkin space $F^{s}_{p,q}(\mathbb{R}^{d})$, which are the closures of $\mathcal{S}$ under the following norms:
$$
\|w\|_{B^{s}_{p,q}} := \|\chi \star w\|_{p} + \left( \sum_{j=0}^{\infty} 2^{jsq} \| \Delta_{j}w \|^{q}_{p} \right)^{\frac{1}{q}},
$$
and
$$
\|w\|_{F^{s}_{p,q}} := \|\chi \star w\|_{p} + \left\| \left( \sum_{j=0}^{\infty} 2^{jsq} | \Delta_{j}w |^{q} \right)^{\frac{1}{q}} \right\|_{p}.
$$

The homogeneous Besov space $\dot{B}^{s}_{p,q}$ and Triebel-Lizorkin space $\dot{F}^{s}_{p,q}$ are defined similarly. Based on the above content, we can define the following $\phi$-type Besov space $B^{s,\phi}_{p,q}(\mathbb{R}^{d})$ and $\phi$-type Triebel-Lizorkin space $F^{s,\phi}_{p,q}(\mathbb{R}^{d})$, which have been studied in \cite{Mikulevicius,Sato}.

\begin{definition}
For $s \in \mathbb{R}$ and $1 \leq p, q \leq \infty$, the $\phi$-Besov space $B^{\phi,s}_{p,q}(\mathbb{R}^{d})$ and $\phi$-Triebel-Lizorkin space $F^{\phi,s}_{p,q}(\mathbb{R}^{d})$ can be defined as follows:
$$
B^{\phi,s}_{p,q}(\mathbb{R}^{d}) = \left\{ w \in \mathcal{S}'(\mathbb{R}^{d}) : \|w\|_{B^{\phi,s}_{p,q}} = \| \chi \star w \|_{p} + \left\| \left\{ \phi(2^{2j})^{\frac{s}{2}} \| \Delta_{j}w \|_{p} \right\}_{j \in \mathbb{Z}^{+}} \right\|_{l^{q}} \right\},
$$
and
$$
F^{\phi,s}_{p,q}(\mathbb{R}^{d}) = \left\{ w \in \mathcal{S}'(\mathbb{R}^{d}) : \|w\|_{F^{\phi,s}_{p,q}} = \| \chi \star w \|_{p} + \left\| \left\| \left\{ \phi(2^{2j})^{\frac{s}{2}} | \Delta_{j}w | \right\}_{j \in \mathbb{Z}^{+}} \right\|_{l^{q}} \right\|_{L^{p}} \right\}.
$$
\end{definition}

The definitions of homogeneous $\phi$-Besov space and homogeneous $\phi$-Triebel-Lizorkin space are similar, with the removal of $\| \chi \star w \|_{p}$, changing $j \in \mathbb{Z}^{+}$ to $j \in \mathbb{Z}$, and replacing $\mathcal{S}'$ with $\mathcal{S}' / \mathcal{P}$.

\begin{remark}\label{some remark of bernstein function} The following facts hold:
\begin{enumerate}[\rm(i)]
 \item In particular, if $\phi(x) = x$, then $B^{\phi,s}_{p,q}(\mathbb{R}^{d})$ and $F^{\phi,s}_{p,q}(\mathbb{R}^{d})$ coincide with the standard spaces $B^{s}_{p,q}(\mathbb{R}^{d})$ and $F^{s}_{p,q}(\mathbb{R}^{d})$. And for $1<p<\infty$, we also have $L^{p}(\mathbb{R}^{d})\sim F^{0}_{p,2}(\mathbb{R}^{d})\sim \dot{F}^{0}_{p,2}(\mathbb{R}^{d})\sim F^{\phi,0}_{p,2}(\mathbb{R}^{d})\sim \dot{F}^{\phi,0}_{p,2}(\mathbb{R}^{d})$, where $\sim$ indicates that the corresponding norms are equivalent.
 \item Since $\phi$ is a Bernstein function, it satisfies $\phi(2^{2j}) \leq 1 \vee 2^{2j}$ for any $j \in \mathbb{Z}$. This implies that
 $$
 \big\|w\big\|_{B^{\phi,s}_{p,q}} \lesssim \big\|w\big\|_{B^{s}_{p,q}} \quad \text{and} \quad \big\|w\big\|_{F^{\phi,s}_{p,q}} \lesssim \big\|w\big\|_{F^{s}_{p,q}}.
 $$
 \item It is easy to see that
 $$
 F^{\phi,s}_{p,2} \sim H^{s,\phi}_{p}(\mathbb{R}^{d}),
 $$
 where
 $$
 H^{s,\phi}_{p}(\mathbb{R}^{d}) = \bigg\{ w \in \mathcal{S}': \big(I - \phi(\Delta)\big)^{\frac{s}{2}} w \in L^{p}(\mathbb{R}^{d}) \bigg\}
 $$
 is the $\phi$-Bessel potential space. More details about $H^{s,\phi}_{p}$ can be found in {\rm\cite{Farkas}}.
 \item The $\phi$-Bessel potential $\big(I - \phi(\Delta)\big)^{\frac{\nu}{2}}$ is an isomorphism:
 $$
 B^{s,\phi}_{p,q}(\mathbb{R}^{d}) \rightarrow B^{s-\nu,\phi}_{p,q}(\mathbb{R}^{d}), \quad \text{and} \quad F^{s,\phi}_{p,q}(\mathbb{R}^{d}) \rightarrow F^{s-\nu,\phi}_{p,q}(\mathbb{R}^{d}),
 $$
 which can be found in \rm\cite{Mikulevicius,Kim1}.
\end{enumerate}
\end{remark}

In the following, we introduce H\"{o}rmander multiplier theory. For more details, refer to \cite{Grafakos,Triebel}.

As is well-known, for any bounded linear operator $T$ on $L^{p}$ that commutes with translation-invariant operators, there exists a unique tempered distribution $m_{T}$ such that
$$
T(f) = m_{T} \star f \quad \text{for any } f \in \mathcal{S}(\mathbb{R}^{d}).
$$
Let $\mathcal{U}_{p}$ denote the set of bounded linear operators on $L^{p}$ that commute with translation-invariant operators. Then, we define
$$
\big\|T\big\|_{L^{p}-L^{p}} = \big\|m\big\|_{\mathcal{U}_{p}}, \quad Tg = \mathcal{F}^{-1}\big(m \mathcal{F}g\big),
$$
where the tempered distribution $m$ is also called a H\"{o}rmander multiplier. The multiplier $m$ has good scaling properties, that is, for any $\lambda>0$,
\begin{align}\label{M.P.scailing}
\big\|m(\lambda \cdot)\big\|_{\mathcal{U}_{p}} = \big\|m\big\|_{\mathcal{U}_{p}}.
\end{align}
The following multiplier theorem can be found in \cite{Grafakos}.

\begin{theorem}\label{Multiplier theorem}
Let $m:\mathbb{R}^{d}\setminus \{0\}\rightarrow\mathbb{C}$ be a bounded function and for any multi-index $\beta$ with $|\beta| \leq \lceil\frac{d}{2}\rceil + 1$, satisfying
$$
\left( \int_{M < |\xi| < 2M} \big|D_{\xi}^{\beta} m(\xi)\big|^{2} d\xi \right)^{\frac{1}{2}} \lesssim M^{\frac{d}{2} - |\beta|}, \quad \text{for any } M > 0.
$$
Then, the operator $g \mapsto \mathcal{F}^{-1}(m \mathcal{F}g)$ is of strong type $(p,p)$ for $1 < p < \infty$ and weak type $(1,1)$.
\end{theorem}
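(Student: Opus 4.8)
The plan is to realize $Tg=\mathcal F^{-1}(m\,\mathcal Fg)=K\star g$ as a Calderón--Zygmund operator. The $L^{2}$ bound is immediate from Plancherel: since $m$ is bounded, $\|Tg\|_{2}=\|m\,\mathcal Fg\|_{2}\le\|m\|_{\infty}\|g\|_{2}$. Everything then reduces to showing that the convolution kernel $K=\mathcal F^{-1}m$ satisfies the H\"ormander regularity condition
\[
\sup_{y\neq0}\int_{|x|\ge 2|y|}\bigl|K(x-y)-K(x)\bigr|\,dx<\infty,
\]
after which the standard Calderón--Zygmund theorem yields weak type $(1,1)$ and strong type $(p,p)$ for $1<p\le2$, and duality (the adjoint multiplier $\overline{m}$ satisfies the same hypotheses) upgrades this to all $1<p<\infty$.

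To exploit the annular hypothesis I would decompose $m$ dyadically. Writing $N:=\lceil d/2\rceil+1$, choose a smooth bump $\Psi$ supported in $\{1/2\le|\xi|\le2\}$ with $\sum_{j\in\mathbb Z}\Psi(2^{-j}\xi)=1$ for $\xi\neq0$ (this is the Littlewood--Paley partition already fixed above), and set $m_{j}(\xi)=m(\xi)\Psi(2^{-j}\xi)$, $K_{j}=\mathcal F^{-1}m_{j}$, and $\tilde m_{j}(\xi)=m_{j}(2^{j}\xi)$, so that $K_{j}(x)=2^{jd}\tilde K_{j}(2^{j}x)$ with $\tilde K_{j}=\mathcal F^{-1}\tilde m_{j}$. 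The crucial computation is that the rescaled pieces are \emph{uniformly} bounded in $H^{N}$: by the Leibniz rule the derivatives of $\Psi(2^{-j}\cdot)$ contribute factors $2^{-j|\beta-\gamma|}$, and feeding the hypothesis $\bigl(\int_{M<|\xi|<2M}|D^{\gamma}m|^{2}\bigr)^{1/2}\lesssim M^{d/2-|\gamma|}$ into the change of variables gives $\|D^{\beta}\tilde m_{j}\|_{2}\lesssim1$ for every $|\beta|\le N$, uniformly in $j$. By Plancherel this transfers to weighted bounds on the kernel, $\|\tilde K_{j}\|_{2}\lesssim1$ and $\||x|^{N}\tilde K_{j}\|_{2}\lesssim1$ (and likewise for $\nabla\tilde K_{j}$, whose symbol is $\xi\tilde m_{j}$). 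Splitting the integral at radius $1$ and using Cauchy--Schwarz then yields the two estimates I will actually use: $\int_{|u|\ge R}|\tilde K_{j}(u)|\,du\lesssim R^{d/2-N}$ (finite because $2N>d$) and $\int|\nabla\tilde K_{j}|\,du\lesssim1$.

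With these in hand I would verify the H\"ormander condition by summing $I_{j}:=\int_{|x|\ge2|y|}|K_{j}(x-y)-K_{j}(x)|\,dx$ over $j$, splitting at the critical scale $2^{j}|y|\sim1$. For the coarse scales $2^{j}|y|\le1$ the mean value theorem together with the scaling identity $\int|\nabla K_{j}|=2^{j}\int|\nabla\tilde K_{j}|\lesssim2^{j}$ gives $I_{j}\lesssim2^{j}|y|$, which sums geometrically to $O(1)$. For the fine scales $2^{j}|y|>1$ I bound $I_{j}$ by the two tail integrals $\int_{|x|\ge2|y|}|K_{j}|$ and its $y$-translate; the tail estimate gives $I_{j}\lesssim(2^{j}|y|)^{d/2-N}$, and since the exponent is negative this too sums to $O(1)$ uniformly in $y$. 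This closes the kernel estimate and, together with the $L^{2}$ bound, the Calderón--Zygmund machinery, and duality, completes the proof.

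The main obstacle is the dyadic transference step: converting the purely frequency-side, annulus-by-annulus $L^{2}$ derivative bounds on $m$ into the weighted kernel bounds that simultaneously control smoothness (via $\nabla\tilde K_{j}$) and decay (via $|x|^{N}\tilde K_{j}$) needed to sum the $I_{j}$. The bookkeeping in the Leibniz expansion, and the verification that the choice $N=\lceil d/2\rceil+1$ at once guarantees $2N>d$ (for convergence of the tail integral) and keeps the derivative count within the hypothesis, is where the argument must be handled with care; once the uniform bounds $\|\tilde m_{j}\|_{H^{N}}\lesssim1$ are secured, the scale splitting is routine.
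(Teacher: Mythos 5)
Your proposal is correct and takes essentially the same route as the source the paper relies on: the paper itself gives no proof of this theorem (it is quoted from Grafakos, \emph{Classical Fourier Analysis}), and your argument --- Plancherel for the $L^{2}$ bound, dyadic decomposition $m_{j}=m\,\Psi(2^{-j}\cdot)$ with uniform bounds $\|D^{\beta}\tilde m_{j}\|_{L^{2}}\lesssim 1$ for $|\beta|\leq N=\lceil d/2\rceil+1$, transfer via Plancherel to the weighted kernel estimates $\|\tilde K_{j}\|_{2}+\||x|^{N}\tilde K_{j}\|_{2}\lesssim 1$, verification of the H\"ormander kernel condition by splitting at the scale $2^{j}|y|\sim 1$, and then Calder\'on--Zygmund theory plus duality --- is precisely the standard proof given there. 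Your check that $2N>d$ (so the tail integrals $\int_{|u|\geq R}|\tilde K_{j}|\,du\lesssim R^{d/2-N}$ converge and both scale sums are geometric) is the key numerical point, and it is handled correctly.
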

\section{Some Lemmas}
In this section, we prove some Lemma that are crucial for this paper.
\begin{lemma}\label{find Mild solution}
If $w$ satisfies E.q. \eqref{E.Q.T.F.S.E}, then the solution $w(t,x)$ can be represented as:
\begin{align}\label{mild solution}
w(t,x) &= \mathcal{S}_{\alpha,\phi}(t)w_{0}(x) + \frac{1}{i}
\int_{0}^{t}(t-\tau)^{\alpha-1}\mathcal{P}_{\alpha,\phi}(t-\tau)g(w(\tau,x)) \, d\tau,
\end{align}
where in the distribution sence,
$$
\mathcal{S}_{\alpha,\phi}(t)=\mathcal{F}^{-1}\big(E_{\alpha,1}(-it^{\alpha}\phi(|\xi|^{2}))\big) \text{ and }\mathcal{P}_{\alpha,\phi}(t)=\mathcal{F}^{-1}\big(E_{\alpha,\alpha}(-it^{\alpha}\phi(|\xi|^{2}))\big).
$$
\end{lemma}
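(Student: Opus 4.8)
The plan is to reduce the fractional evolution problem \eqref{E.Q.T.F.S.E} to a scalar algebraic equation by composing the spatial Fourier transform with the temporal Laplace transform, and then to recover \eqref{mild solution} by inverting both transforms with the help of the Mittag-Leffler Laplace identity \eqref{M.T.F.Laplace}. First I would apply $\mathcal{F}$ in the $x$-variable. Since $\phi(-\Delta) = -\phi(\Delta)$ acts as the Fourier multiplier $\phi(|\xi|^{2})$ and $1/i = -i$, for each fixed $\xi$ the equation becomes the scalar fractional ODE
$$
\partial_t^\alpha \widehat{w}(t,\xi) = -\,i\,\phi(|\xi|^{2})\,\widehat{w}(t,\xi) + \tfrac{1}{i}\,\widehat{g(w)}(t,\xi), \qquad \widehat{w}(0,\xi) = \widehat{w}_0(\xi).
$$

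Next I would take the Laplace transform in $t$. Using the definition of the Caputo derivative together with $\mathcal{L}[g_{1-\alpha}](s) = s^{\alpha-1}$ and the differentiation rule, one has $\mathcal{L}[\partial_t^\alpha f](s) = s^{\alpha}\widetilde{f}(s) - s^{\alpha-1}f(0)$; writing $\widetilde{w},\widetilde{g}$ for the transforms in $t$, the ODE turns into the algebraic relation $(s^{\alpha} + i\phi(|\xi|^{2}))\widetilde{w}(s,\xi) = s^{\alpha-1}\widehat{w}_0(\xi) + \tfrac{1}{i}\widetilde{g}(s,\xi)$, hence
$$
\widetilde{w}(s,\xi) = \frac{s^{\alpha-1}}{s^{\alpha} + i\phi(|\xi|^{2})}\,\widehat{w}_0(\xi) + \frac{1}{i}\,\frac{1}{s^{\alpha} + i\phi(|\xi|^{2})}\,\widetilde{g}(s,\xi).
$$
Inverting in $s$ by \eqref{M.T.F.Laplace} of Proposition \ref{X.Z.of Mitag function} with $a = -i\phi(|\xi|^{2})$ (so that $s^{\alpha}-a = s^{\alpha}+i\phi(|\xi|^{2})$), the choice $\beta = 1$ identifies the first multiplier as $E_{\alpha,1}(-it^{\alpha}\phi(|\xi|^{2}))$, while $\beta = \alpha$ gives $\mathcal{L}^{-1}\big[(s^{\alpha}+i\phi(|\xi|^{2}))^{-1}\big] = t^{\alpha-1}E_{\alpha,\alpha}(-it^{\alpha}\phi(|\xi|^{2}))$. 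Because the second term is a product in $s$, the convolution theorem converts it into a convolution in $t$, yielding
$$
\widehat{w}(t,\xi) = E_{\alpha,1}(-it^{\alpha}\phi(|\xi|^{2}))\,\widehat{w}_0(\xi) + \frac{1}{i}\int_{0}^{t}(t-\tau)^{\alpha-1}E_{\alpha,\alpha}(-i(t-\tau)^{\alpha}\phi(|\xi|^{2}))\,\widehat{g(w)}(\tau,\xi)\,d\tau.
$$
Applying $\mathcal{F}^{-1}$ and recognizing the two symbols as $\mathcal{F}\mathcal{S}_{\alpha,\phi}(t)$ and $\mathcal{F}\mathcal{P}_{\alpha,\phi}(t)$ then gives exactly \eqref{mild solution}.

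The main obstacle is making this formal computation rigorous in the distributional sense asserted by the lemma. The identity \eqref{M.T.F.Laplace} is stated only in the region $|s^{-\alpha}a| < 1$, so the inversion for all relevant $s$ must be justified by analytic continuation of both sides as functions of $s$. Moreover, the symbols $E_{\alpha,1}(-it^{\alpha}\phi(|\xi|^{2}))$ and $E_{\alpha,\alpha}(-it^{\alpha}\phi(|\xi|^{2}))$ are bounded but generally not integrable, so $\mathcal{S}_{\alpha,\phi}(t)$ and $\mathcal{P}_{\alpha,\phi}(t)$ have to be interpreted as Fourier-multiplier operators on $\mathcal{S}'(\mathbb{R}^{d})$ rather than as convolutions against $L^{1}$ kernels, and the interchange of the two transforms together with the Fubini/convolution steps must be carried out in a class where these operations are legitimate (e.g. on Schwartz data, with the general case following by density). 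I expect these justifications, rather than the algebraic identification of the multipliers, to be the delicate part of the argument.
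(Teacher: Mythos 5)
Your proposal follows exactly the paper's own argument: Fourier transform in $x$, Laplace transform in $t$ (using the Caputo rule $\mathcal{L}[\partial_t^\alpha f](s)=s^\alpha\widetilde f(s)-s^{\alpha-1}f(0)$), inversion via the Mittag-Leffler identity \eqref{M.T.F.Laplace} with $a=-i\phi(|\xi|^2)$ and $\beta=1,\alpha$, the convolution theorem for the Duhamel term, and finally $\mathcal{F}^{-1}$. The additional remarks you make about analytic continuation in $s$ and the distributional interpretation of the multipliers are sound and in fact address justifications the paper leaves implicit.
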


\begin{proof}
By applying the Fourier transform on E.q. \eqref{E.Q.T.F.S.E}, we obtain:
\begin{align*}
\begin{cases}
i\partial_{t}^{\alpha} \mathcal{F}w(t, \xi) - \phi(|\xi|^{2}) \mathcal{F}w(t, \xi) - \mathcal{F}(g(w))(t, \xi) = 0 & \text{in } (0,\infty) \times \mathbb{R}^{d}, \\
\mathcal{F}w(0, \xi) = \mathcal{F}(w_0)(\xi) & \text{in } \mathbb{R}^{d},
\end{cases}
\end{align*}
By applying the Laplace transform, we have:
\begin{align*}
\mathcal{L}(\widehat{w}(s, \xi)) = \frac{s^{\alpha-1}}{s^{\alpha} +i\phi(|\xi|^{2})}\widehat{w}_0(\xi) + \frac{-i}{s^{\alpha} +i\phi(|\xi|^{2})} \mathcal{L}(\mathcal{F}(g(w)))(s, \xi),
\end{align*}
Additionally, considering Proposition \ref{X.Z.of Mitag function} and \eqref{M.T.F.Laplace}, we derive that
\begin{align*}
\widehat{w}(t, \xi) = E_{\alpha,1}(-it^{\alpha} \phi(|\xi|^{2})) \widehat{w}_0(\xi) + \frac{1}{i} \int_{0}^{t} (t-\tau)^{\alpha-1} E_{\alpha,\alpha}(-i(t-\tau)^{\alpha}\phi(|\xi|^{2})) \mathcal{F}(g(w))(\tau, \xi) \, d\tau
\end{align*}
By applying the inverse Fourier transform, we obtain that
\begin{align*}
w(t, x)= \mathcal{S}_{\alpha,\phi}(t)w_{0}(x) + \frac{1}{i}
\int_{0}^{t}(t-\tau)^{\alpha-1}\mathcal{P}_{\alpha,\phi}(t-\tau)g(w(\tau,x)) \, d\tau.
\end{align*}
\end{proof}
\begin{definition}\label{D.f.mild solution}
Let \(X\) be a Banach space and let \(0 < T \leq \infty\). If a measurable function \(w: [0, T] \rightarrow X\) satisfies the integral E.q. \eqref{mild solution}, then we call \(u\) a mild solution of Eq. \eqref{E.Q.T.F.S.E}. In particular, if \(T = \infty\), then we denote \(w\) as a global mild solution of E.q. \eqref{E.Q.T.F.S.E}.
\end{definition}

We establish the embedding theorem for $\phi$-type Besov spaces and $\phi$-type Triebel-Lizorkin spaces, as well as the Gagliardo-Nirenberg inequality in the $\phi$-Triebel-Lizorkin space.

\begin{lemma}\label{embedding theorem}
Let $1 \leq p, p_{0}, q, q_{0} \leq \infty$ and $s, s_{0} \in \mathbb{R}$. If
$$
\delta s_{0} - \frac{d}{p_{0}} = \delta s - \frac{d}{p},
$$
then the following embedding relations hold:
\begin{align*}
&\dot{B}^{s_{0}, \phi}_{p_{0}, q_{0}}(\mathbb{R}^{d}) \hookrightarrow \dot{B}^{s, \phi}_{p, q}(\mathbb{R}^{d}), \quad
B^{s_{0}, \phi}_{p_{0}, q_{0}}(\mathbb{R}^{d}) \hookrightarrow B^{s, \phi}_{p, q}(\mathbb{R}^{d}), \quad \text{for any } p_{0} \leq p, q_{0} \leq q, \\
&\dot{F}^{s_{0}, \phi}_{p_{0}, q_{0}}(\mathbb{R}^{d}) \hookrightarrow \dot{F}^{s, \phi}_{p, q}(\mathbb{R}^{d}), \quad
F^{s_{0}, \phi}_{p_{0}, q_{0}}(\mathbb{R}^{d}) \hookrightarrow F^{s, \phi}_{p, q}(\mathbb{R}^{d}), \quad \text{for any } p_{0} < p.
\end{align*}
\end{lemma}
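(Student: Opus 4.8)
The plan is to reduce both families of embeddings to two elementary facts applied block-by-block in the Littlewood--Paley decomposition: the classical Bernstein inequality and the scaling Assumption H1. First I would record the block estimate. Since $\widehat{\Delta_j w}$ is supported where $|\xi|\sim 2^{j}$, Bernstein's inequality gives, for $p_0\le p$,
\begin{align*}
\|\Delta_j w\|_{L^p}\lesssim 2^{jd(1/p_0-1/p)}\|\Delta_j w\|_{L^{p_0}}.
\end{align*}
The hypothesis $\delta s_0-d/p_0=\delta s-d/p$ rewrites as $d(1/p_0-1/p)=\delta(s_0-s)$, and $p_0\le p$ forces $s_0\ge s$. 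Assumption \eqref{S.C.condition} is exactly the statement that $x\mapsto \phi(x)/x^{\delta}$ is almost increasing, so on the relevant (high-frequency) scales $2^{jd(1/p_0-1/p)}=2^{j\delta(s_0-s)}\lesssim \phi(2^{2j})^{(s_0-s)/2}$. Multiplying the Bernstein bound by $\phi(2^{2j})^{s/2}$ then yields the key block inequality
\begin{align*}
\phi(2^{2j})^{s/2}\|\Delta_j w\|_{L^p}\lesssim \phi(2^{2j})^{s_0/2}\|\Delta_j w\|_{L^{p_0}}.
\end{align*}
This is the step where H1 replaces the homogeneity of the symbol exploited by Su \cite{Su}, converting the Euclidean integrability gain into the $\phi$-weight.

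For the Besov embeddings I would simply feed the block inequality into the $\ell^{q}$ summation. Because the $\ell^{q}$-norm is exterior to $L^{p}$, taking $\ell^{q}$ of the block inequality and then using $\ell^{q_0}\hookrightarrow\ell^{q}$ (valid since $q_0\le q$) gives $\|w\|_{\dot B^{s,\phi}_{p,q}}\lesssim\|w\|_{\dot B^{s_0,\phi}_{p_0,q_0}}$ at once. For the inhomogeneous space the sum runs over $j\ge 0$, so the comparison $2^{j\delta(s_0-s)}\lesssim\phi(2^{2j})^{(s_0-s)/2}$ is needed only for $2^{2j}\ge 1$, precisely the regime in which H1 furnishes the lower bound $\phi(x)\gtrsim x^{\delta}$; the low-frequency term $\|\chi\star w\|_{p}$ is then controlled separately by a single application of Bernstein.

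The Triebel--Lizorkin embeddings are the main obstacle, and the reason the hypothesis weakens to the strict condition $p_0<p$ with no constraint between $q_0$ and $q$. Here the $\ell^{q}$-norm sits inside $L^{p}$, so one can neither sum the block inequality termwise nor invoke $\ell^{q_0}\hookrightarrow\ell^{q}$, and the integrability gain $p_0\to p$ must instead be produced pointwise. I would follow the Jawerth--Franke scheme: use the Peetre maximal function for the band-limited blocks $\Delta_j w$ together with the Fefferman--Stein vector-valued maximal inequality to transfer the gain through the interior $\ell^{q}$-norm, using the strict inequality $p_0<p$ to absorb the maximal operator. The only $\phi$-specific modification is again the comparison supplied by H1, which lets the scaling exponent $\delta$ play the role that the homogeneity exponent plays in the classical argument. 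The delicate point is ensuring this comparison, and hence the whole maximal-function estimate, holds uniformly across dyadic scales; for the homogeneous spaces, where $j\to-\infty$ is included, this forces attention to the behaviour of $\phi(x)/x^{\delta}$ near the origin, whereas the inhomogeneous case needs only the high-frequency regime already guaranteed by H1.
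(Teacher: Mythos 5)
Your treatment of the Besov half matches the paper's own argument: Bernstein's inequality on each dyadic block, the identity $d(1/p_{0}-1/p)=\delta(s_{0}-s)$, the lower bound $2^{2j\delta}\lesssim\phi(2^{2j})$ supplied by Assumption H1 (valid for $j\geq 0$ by taking $k=1$ in \eqref{S.C.condition}), and finally $\ell^{q_{0}}\hookrightarrow\ell^{q}$. That part is correct, and your closing remark that the homogeneous case additionally requires a lower bound on $\phi(x)/x^{\delta}$ near the origin --- which H1 alone does not provide, since for $k<K=1$ it only yields $\phi(k)\lesssim k^{\delta}$ --- is a legitimate observation about a point the paper passes over by declaring the homogeneous case ``similar''.

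The genuine gap is in the Triebel--Lizorkin half, which is the hard part and the real content of the lemma. The mechanism you propose --- Peetre maximal functions combined with the Fefferman--Stein vector-valued maximal inequality, ``using the strict inequality $p_{0}<p$ to absorb the maximal operator'' --- cannot produce the result: the Fefferman--Stein inequality is a bounded map $L^{p}(\ell^{q})\rightarrow L^{p}(\ell^{q})$ for one fixed Lebesgue exponent, and the Hardy--Littlewood maximal operator does not map $L^{p_{0}}$ into $L^{p}$ when $p>p_{0}$, so no maximal-function majorization can manufacture the integrability gain $p_{0}\rightarrow p$ that the embedding asserts. That gain must come from band-limitedness, i.e.\ from the Nikolskii-type pointwise bound $|\Delta_{j}g(x)|\lesssim 2^{jd/p_{0}}\|\Delta_{j}g\|_{p_{0}}$, fed into an argument that can cope with the interior $\ell^{q}$-norm. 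The paper does exactly this: it reduces to the extreme case $F^{s_{0},\phi}_{p_{0},\infty}\hookrightarrow F^{s,\phi}_{p,1}$, normalizes $\|g\|_{F^{s_{0},\phi}_{p_{0},\infty}}=1$, writes $\big\|\sum_{j}\phi(2^{2j})^{s/2}|\Delta_{j}g|\big\|_{p}^{p}$ as an integral of the distribution function, and for each level $\lambda$ splits the frequency sum at $A=A(\lambda)$ with $2^{Ad/p}\sim\lambda$: the block $j<A$ is pointwise $\lesssim 2^{Ad/p}\leq\lambda/2$ by Nikolskii, so only the tail $j\geq A$ can exceed $\lambda/2$, and by H1 the tail is $\lesssim 2^{A\delta(s-s_{0})}\sup_{j}\phi(2^{2j})^{s_{0}/2}|\Delta_{j}g|$; the relation $\delta(s_{0}-s)=d/p_{0}-d/p$ then converts the level set into $\{\sup_{j}\phi(2^{2j})^{s_{0}/2}|\Delta_{j}g|>c\lambda^{p/p_{0}}\}$, and integrating in $\lambda$ closes the estimate with the $L^{p_{0}}$-norm of the supremum. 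Some device of this type (or Jawerth's interpolation argument, or a rearrangement proof) is indispensable; as written, your proposal does not prove the $F$-embedding.
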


\begin{proof}
We only prove the case for the nonhomogeneous spaces; the homogeneous spaces can be handled similarly.

First, we prove $B^{s_{0}, \phi}_{p_{0}, q_{0}}(\mathbb{R}^{d}) \hookrightarrow B^{s, \phi}_{p, q}(\mathbb{R}^{d})$.

By the Bernstein inequality \cite[Chapter 1]{Bahouri},
$$
\big\|\Delta_{j}g\big\|_{p} \lesssim 2^{j\left(\frac{d}{p_{0}} - \frac{d}{p}\right)} \big\|\Delta_{j}g\big\|_{p_{0}}, \quad
\big\|\chi \star g\big\|_{p} \lesssim \big\|\chi \star g\big\|_{p_{0}}.
$$
Noting that $\delta s_{0} - \delta s = \frac{d}{p_{0}} - \frac{d}{p}$, we obtain
\begin{align*}
\big\|g\big\|_{B^{s, \phi}_{p, q}} &= \big\|\chi \star g\big\|_{p} + \left( \sum_{j=0}^{\infty} \phi(2^{2j})^{\frac{s q}{2}} \big\|\Delta_{j}g\big\|_{p}^{q} \right)^{\frac{1}{q}} \\
&\lesssim \big\|\chi \star g\big\|_{p_{0}} + \left( \sum_{j=0}^{\infty} \phi(2^{2j})^{\frac{s q}{2}} 2^{j q \left( \frac{d}{p} - \frac{d}{p_{0}} \right)} \big\|\Delta_{j}g\big\|_{p_{0}}^{q} \right)^{\frac{1}{q}} \\
&\lesssim \big\|\chi \star g\big\|_{p_{0}} + \left( \sum_{j=0}^{\infty} \phi(2^{2j})^{\frac{s q}{2} - \frac{s_{0} q}{2}} 2^{j q \left( \frac{d}{p} - \frac{d}{p_{0}} \right)} \phi(2^{2j})^{\frac{s_{0} q}{2}} \big\|\Delta_{j}g\big\|_{p_{0}}^{q} \right)^{\frac{1}{q}} \\
&\lesssim \big\|\chi \star g\big\|_{p_{0}} + \left( \sum_{j=0}^{\infty} \phi(2^{2j})^{\frac{s_{0} q_{0}}{2}} \big\|\Delta_{j}g\big\|_{p_{0}}^{q_{0}} \right)^{\frac{1}{q_{0}}} \\
&= \big\|g\big\|_{B^{s_{0}, \phi}_{p_{0}, q_{0}}},
\end{align*}
where we used \eqref{S.C.condition}, the fact that $l^{q_{0}} \hookrightarrow l^{q}$, and $s_{0} > s$. This implies $B^{s_{0}, \phi}_{p_{0}, q_{0}}(\mathbb{R}^{d}) \hookrightarrow B^{s, \phi}_{p, q}(\mathbb{R}^{d})$.

Next, we prove that for any $p_{0} < p$, $F^{s_{0}, \phi}_{p_{0}, q_{0}}(\mathbb{R}^{d}) \hookrightarrow F^{s, \phi}_{p, q}(\mathbb{R}^{d})$. Indeed, we only verify that $F^{s_{0}, \phi}_{p_{0}, \infty}(\mathbb{R}^{d}) \hookrightarrow F^{s, \phi}_{p, 1}(\mathbb{R}^{d})$. Without loss of generality, assume $\big\|g\big\|_{F^{s_{0}, \phi}_{p_{0}, \infty}} = 1$, and consider
\begin{align*}
\big\|g\big\|_{F^{s, \phi}_{p, 1}} = \big\|\chi \star g\big\|_{p} + \left\| \sum_{j=0}^{\infty} \phi(2^{2j})^{\frac{s}{2}} |\Delta_{j}g| \right\|_{p}.
\end{align*}
We obtain
\begin{align*}
\left\| \sum_{j=0}^{\infty} \phi(2^{2j})^{\frac{s}{2}} |\Delta_{j}g| \right\|_{p}^{p} &= p \int_{0}^{K} s^{p-1} \left| \left\{ x : \sum_{j=0}^{\infty} \phi(2^{2j})^{\frac{s}{2}} |\Delta_{j}g| > s \right\} \right| ds \\
&\quad + p \int_{K}^{\infty} s^{p-1} \left| \left\{ x : \sum_{j=0}^{\infty} \phi(2^{2j})^{\frac{s}{2}} |\Delta_{j}g| > s \right\} \right| ds \\
&= I + II,
\end{align*}
where $K \gg 1$ is a constant to be chosen later.

Note that $p_{0} < p$, $s_{0} > s$, and \eqref{M.P.scailing}, hence
$$
\sum_{j=0}^{\infty} \phi(2^{2j})^{\frac{s}{2}} |\Delta_{j}g| \leq \sum_{j=0}^{\infty} \phi(2^{2j})^{\frac{s - s_{0}}{2}} \sup_{j \geq 0} \phi(2^{2j})^{\frac{s_{0}}{2}} |\Delta_{j}g| \lesssim \sup_{j \geq 0} \phi(2^{2j})^{\frac{s_{0}}{2}} |\Delta_{j}g|.
$$
Thus, we have
\begin{align*}
I &\lesssim \int_{0}^{K} K^{p - p_{0}} s^{p_{0} - 1} \left| \left\{ x : \sup_{j \geq 0} \phi(2^{2j})^{\frac{s_{0}}{2}} |\Delta_{j}g| > c s \right\} \right| ds \\
&\lesssim \int_{0}^{c K} t^{p_{0} - 1} \left| \left\{ x : \sup_{j \geq 0} \phi(2^{2j})^{\frac{s_{0}}{2}} |\Delta_{j}g| > t \right\} \right| dt \\
&\lesssim \left\| \sup_{j \geq 0} \phi(2^{2j})^{\frac{s_{0}}{2}} |\Delta_{j}g| \right\|_{p_{0}}^{p_{0}} \lesssim 1,
\end{align*}
where the constant depends on $K$, but $K$ is fixed.

Moreover, by Bernstein's inequality and \eqref{M.P.scailing}, we have that
\begin{align*}
\phi(2^{2j})^{\frac{s}{2}}\big|\Delta_{j}g\big|&\lesssim\phi(2^{2j})^{\frac{s}{2}}2^{j\frac{d}{p_{0}}}\big\|\Delta_{j}g\big\|_{p_{0}}
\lesssim \phi(2^{2j})^{\frac{s-s_{0}}{2}}2^{j\frac{d}{p_{0}}}\big(\sup_{j\geq 0}\phi(2^{2j})^{\frac{s_{0}}{2}}\big\|\Delta_{j}g\big\|_{p_{0}}\big)\\
&\lesssim 2^{j\frac{d}{p}}\bigg\|\sup_{j\geq 0}\phi(2^{2j})^{\frac{s_{0}}{2}}\big|\Delta_{j}g\big|\bigg\|_{p_{0}},
\end{align*}
Hence for any $A\in\mathbb{N}$,
\begin{align*}
\sum_{j=0}^{A-1}\phi(2^{2j})^{\frac{s}{2}}\big|\Delta_{j}g\big|\lesssim\sum_{j=0}^{A-1}2^{j\frac{d}{p}}\bigg\|\sup_{j\geq 0}\phi(2^{2j})^{\frac{s_{0}}{2}}\big|\Delta_{j}g\big|\bigg\|_{p_{0}}\lesssim 2^{\frac{Ad}{p}}.
\end{align*}
Note that $s>K\gg 1$, hence we can choose $A$ to be the largest natural number such that $2^{A\frac{d}{p}}\lesssim s/2$, i.e., $2^{A}\sim s^{\frac{p}{d}}$. Hence we get that
\begin{align*}
\frac{s}{2}<\sum_{j=A}^{\infty}\phi(2^{2j})^{\frac{s}{2}}\big|\Delta_{j}g\big|
\lesssim\sum_{j=A}^{\infty}\phi(2^{2j})^{\frac{s-s_{0}}{2}}\bigg(\sup_{j\geq 0}\phi(2^{2j})^{\frac{s_{0}}{2}}\big|\Delta_{j}g\big|\bigg)\lesssim 2^{A\delta(s-s_{0})}\bigg(\sup_{j\geq 0}\phi(2^{2j})^{\frac{s_{0}}{2}}\big|\Delta_{j}g\big|\bigg).
\end{align*}
Thus we obtain that
\begin{align*}
II&\lesssim\int_{K}^{\infty}s^{p-1}\bigg|\bigg\{x:\sum_{j=0}^{A-1}\phi(2^{2j})^{\frac{s}{2}}|\Delta_{j}g|>\frac{s}{2}\bigg\}
\bigg|ds+\int_{K}^{\infty}s^{p-1}\bigg|\bigg\{x:\sum_{j=A}^{\infty}\phi(2^{2j})^{\frac{s}{2}}|\Delta_{j}g|>\frac{s}{2}\bigg\}
\bigg|ds\\
&\lesssim\int_{K}^{\infty}s^{p-1}\bigg|\bigg\{x:C2^{\frac{Ad}{p}}>\frac{s}{2}\bigg\}
\bigg|ds+\int_{K}^{\infty}s^{p-1}\bigg|\bigg\{x:C2^{A\delta(s-s_{0})}\bigg(\sup_{j\geq 0}\phi(2^{2j})^{\frac{s_{0}}{2}}\big|\Delta_{j}g\big|\bigg)>\frac{s}{2}\bigg\}
\bigg|ds\\
&\lesssim\int_{K}^{\infty}s^{p-1}\bigg|\bigg\{x:C2^{A(\frac{d}{p}-\frac{d}{p_{0}})}\bigg(\sup_{j\geq 0}\phi(2^{2j})^{\frac{s_{0}}{2}}\big|\Delta_{j}g\big|\bigg)>\frac{s}{2}\bigg\}
\bigg|ds\\
&\lesssim\int_{K}^{\infty}s^{p-1}\bigg|\bigg\{x:\sup_{j\geq 0}\phi(2^{2j})^{\frac{s_{0}}{2}}\big|\Delta_{j}g\big|>cs^{\frac{p}{p_{0}}}\bigg\}
\bigg|ds\\
&\lesssim\int_{K'}^{\infty}t^{p_{0}-1}\bigg|\bigg\{x:\sup_{j\geq 0}\phi(2^{2j})^{\frac{s_{0}}{2}}\big|\Delta_{j}g\big|>t\bigg\}
\bigg|dt\\
&\lesssim\bigg\|\sup_{j\geq 0}\phi(2^{2j})^{\frac{s_{0}}{2}}\big|\Delta_{j}g\big|\bigg\|_{p_{0}}^{p_{0}}\lesssim 1.
\end{align*}
And combining $\big\|\chi\star g\big\|_{p}\lesssim\big\|\chi\star g\big\|_{p_{0}}$, we obtain that $F^{s_{0},\phi}_{p_{0},\infty}(\mathbb{R}^{d})\hookrightarrow F^{s,\phi}_{p,1}(\mathbb{R}^{d})$.
\end{proof}

\begin{lemma}\label{G.N.inequality}
Let $1\leq p,q,p_{0},p_{1}<\infty$ and $s,s_{0},s_{1}\in\mathbb{R}$. Then the following Gagliardo-Nirenberg inequality holds:
$$
\big\|g\big\|_{\dot{F}^{s,\phi}_{p,q}(\mathbb{R}^{d})}
\lesssim\big\|g\big\|^{\theta}_{\dot{F}^{s_{0},\phi}_{p_{0},\infty}(\mathbb{R}^{d})}
\big\|g\big\|^{1-\theta}_{\dot{F}^{s_{1},\phi}_{p_{1},\infty}(\mathbb{R}^{d})},\quad
\big\|g\big\|_{F^{s,\phi}_{p,q}(\mathbb{R}^{d})}\lesssim\big\|g\big\|^{\theta}_{F^{s_{0},\phi}_{p_{0},\infty}(\mathbb{R}^{d})}
\big\|g\big\|^{1-\theta}_{F^{s_{1},\phi}_{p_{1},\infty}(\mathbb{R}^{d})}
$$
if and only if
\begin{align*}
\delta s-\frac{d}{p}&=\bigg(\delta s_{0}-\frac{d}{p_{0}}\bigg)\theta+\bigg(\delta s_{1}-\frac{d}{p_{1}}\bigg)(1-\theta),\\
s&\leq \theta s_{0}+(1-\theta)s_{1},\\
\text{if }s&= \theta s_{0}+(1-\theta)s_{1},\text{ then }s_{0}\neq s_{1}.
\end{align*}
\end{lemma}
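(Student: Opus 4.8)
The plan is to establish both implications, carrying the nonhomogeneous and homogeneous spaces in parallel (they are identical once one replaces $j\ge 0$ by $j\in\mathbb{Z}$ and discards the $\chi\star g$ term). For the sufficiency I would first reduce to $q=1$, since $\ell^{1}\hookrightarrow\ell^{q}$ gives $F^{s,\phi}_{p,1}\hookrightarrow F^{s,\phi}_{p,q}$, so the $q=1$ estimate is the strongest. Next I would reduce the subcritical case $s<\theta s_{0}+(1-\theta)s_{1}$ to the critical case $s=\theta s_{0}+(1-\theta)s_{1}$ via Lemma \ref{embedding theorem}: setting $\bar{s}=\theta s_{0}+(1-\theta)s_{1}$ and defining $\bar{p}$ by $\delta\bar{s}-\frac{d}{\bar{p}}=\delta s-\frac{d}{p}$, the condition $\bar{s}>s$ forces $\frac{1}{\bar p}>\frac1p$, i.e. $\bar p<p$, and the two scaling identities are exactly the hypotheses of Lemma \ref{embedding theorem}, yielding $F^{\bar s,\phi}_{\bar p,q}\hookrightarrow F^{s,\phi}_{p,q}$; so the subcritical bound follows from the critical one at $(\bar s,\bar p)$. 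The degenerate subcase $s_{0}=s_{1}=:\sigma$ (which, by hypothesis, can occur only when $s<\sigma$) I would dispatch separately: with $G(x)=\sup_{j}\phi(2^{2j})^{\sigma/2}|\Delta_{j}g(x)|$, log-convexity of the Lebesgue norms gives $\|g\|^{\theta}_{F^{\sigma,\phi}_{p_{0},\infty}}\|g\|^{1-\theta}_{F^{\sigma,\phi}_{p_{1},\infty}}\gtrsim\|G\|_{p_{*}}$ with $\frac{1}{p_{*}}=\frac{\theta}{p_{0}}+\frac{1-\theta}{p_{1}}$, and one checks $\delta\sigma-\frac{d}{p_{*}}=\delta s-\frac{d}{p}$ with $p_{*}<p$, so Lemma \ref{embedding theorem} closes this case as well.

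It then remains to prove the critical inequality when $s=\theta s_{0}+(1-\theta)s_{1}$; here the non-degeneracy hypothesis guarantees $s_{0}\ne s_{1}$, say $s_{1}<s_{0}$, whence $s_{1}<s<s_{0}$ with $\theta=\frac{s-s_{1}}{s_{0}-s_{1}}$, while subtracting the two scaling relations yields $\frac1p=\frac{\theta}{p_{0}}+\frac{1-\theta}{p_{1}}$. Introduce the maximal densities $G_{i}(x)=\sup_{j}\phi(2^{2j})^{s_{i}/2}|\Delta_{j}g(x)|$, so that $\|G_{i}\|_{p_{i}}\le\|g\|_{F^{s_{i},\phi}_{p_{i},\infty}}$ (modulo the low-frequency term). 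For any integer $J$, write $\phi(2^{2j})^{s/2}|\Delta_{j}g|\le\phi(2^{2j})^{(s-s_{i})/2}G_{i}$ and split the sum at $J$; using \eqref{S.C.condition} in the form $\phi(2^{2j})/\phi(2^{2J})\le c^{-1}2^{-2\delta(J-j)}$ for $j<J$, the tail with $s-s_{1}>0$ sums geometrically to $\lesssim\phi(2^{2J})^{(s-s_{1})/2}G_{1}$, and the tail with $s-s_{0}<0$ to $\lesssim\phi(2^{2J})^{(s-s_{0})/2}G_{0}$. Hence pointwise
$$
\sum_{j}\phi(2^{2j})^{s/2}|\Delta_{j}g(x)|\lesssim\phi(2^{2J})^{(s-s_{1})/2}G_{1}(x)+\phi(2^{2J})^{-(s_{0}-s)/2}G_{0}(x).
$$
Choosing $J$ with $\phi(2^{2J})\sim(G_{0}(x)/G_{1}(x))^{2/(s_{0}-s_{1})}$ (attainable up to a bounded factor, since consecutive ratios $\phi(2^{2(J+1)})/\phi(2^{2J})$ lie in $[1,4]$ by monotonicity and concavity of $\phi$) balances the two terms and produces the pointwise interpolation $\sum_{j}\phi(2^{2j})^{s/2}|\Delta_{j}g|\lesssim G_{0}^{\theta}G_{1}^{1-\theta}$. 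Taking the $L^{p}$ norm and applying Hölder with exponents $\frac{p_{0}}{\theta p}$ and $\frac{p_{1}}{(1-\theta)p}$, which are conjugate precisely because $\frac1p=\frac{\theta}{p_{0}}+\frac{1-\theta}{p_{1}}$, gives $\|G_{0}^{\theta}G_{1}^{1-\theta}\|_{p}\le\|G_{0}\|_{p_{0}}^{\theta}\|G_{1}\|_{p_{1}}^{1-\theta}$, which is the assertion; the term $\|\chi\star g\|_{p}$ is of lower order and handled analogously or by Bernstein's inequality.

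For the necessity I would argue by scaling and test functions. Testing on $g$ with $\mathcal{F}g$ supported in a single annulus $|\xi|\sim 2^{N}$ collapses each $\phi$-norm to $\phi(2^{2N})^{\sigma/2}\|g\|_{r}$; coupling $N$ with a spatial dilation of a fixed bump and letting the scales tend to their limits forces the dimensional identity $\delta s-\frac{d}{p}=\theta(\delta s_{0}-\frac{d}{p_{0}})+(1-\theta)(\delta s_{1}-\frac{d}{p_{1}})$, the parameter $\delta$ entering through the power-type behaviour of $\phi$ at the tested scale. The constraint $s\le\theta s_{0}+(1-\theta)s_{1}$ follows by applying the inequality to a lacunary superposition of blocks at geometrically separated frequencies: if it failed, the left-hand $\ell^{q}$-sum would outgrow the right-hand side. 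Finally, were $s_{0}=s_{1}$ in the equality case, both right-hand norms would reduce to a frequency supremum while the left-hand side retains an $\ell^{q}$-sum; evaluating on $M$ equal-amplitude, frequency-separated bumps makes the left side grow like $M^{1/q}$ with the right side bounded, proving the necessity of $s_{0}\ne s_{1}$.

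The crux is the critical case, and within it the pointwise interpolation bound: everything hinges on summing the two frequency tails geometrically, which is exactly where the scaling hypothesis \eqref{S.C.condition} is indispensable—it substitutes for the homogeneity $\phi(\lambda)=\lambda$ available to the classical fractional Laplacian—and where the strict separation $s_{1}<s<s_{0}$, guaranteed by the non-degeneracy hypothesis, is used to make both tails converge. The remaining technical care is to select the integer threshold $J=J(x)$ balancing the two terms despite $\phi(2^{2\cdot})$ being discrete, and to confirm that the critical scaling relation yields precisely the conjugate Hölder exponents assembling the final $L^{p}$ estimate.
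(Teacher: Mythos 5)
Your sufficiency proof and the paper's share the same skeleton: pass to the critical parameters $s^{*}=\theta s_{0}+(1-\theta)s_{1}$, $\frac{d}{p^{*}}=\theta\frac{d}{p_{0}}+(1-\theta)\frac{d}{p_{1}}$, use pointwise convexity plus H\"{o}lder there, and transport the estimate to $(s,p,q)$ via Lemma \ref{embedding theorem}. The difference is that the paper's entire proof is the two-line chain $\big\|g\big\|_{F^{s,\phi}_{p,q}}\lesssim\big\|g\big\|_{F^{s^{*},\phi}_{p^{*},\infty}}\lesssim\big\|g\big\|^{\theta}_{F^{s_{0},\phi}_{p_{0},\infty}}\big\|g\big\|^{1-\theta}_{F^{s_{1},\phi}_{p_{1},\infty}}$, which is justified only in the strictly subcritical case $s<s^{*}$: there $p^{*}<p$ (the paper's ``$p\leq p^{*}$'' is a typo for $p^{*}\leq p$) and the $F$-part of Lemma \ref{embedding theorem} applies. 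In the critical case $s=s^{*}$ one has $p=p^{*}$, so the first inequality degenerates to $F^{s,\phi}_{p,\infty}\hookrightarrow F^{s,\phi}_{p,q}$, which is false for $q<\infty$; this is exactly where the hypothesis $s_{0}\neq s_{1}$ must intervene, and the paper never uses it. Your two-tail argument --- splitting $\sum_{j}\phi(2^{2j})^{s/2}|\Delta_{j}g|$ at a threshold $J$, summing both tails geometrically via \eqref{S.C.condition} thanks to $s_{1}<s<s_{0}$, and balancing $J=J(x)$, which is legitimate because $1\leq\phi(2^{2(j+1)})/\phi(2^{2j})\leq 4$ by monotonicity and concavity --- supplies precisely this missing critical-case estimate, and your H\"{o}lder exponents are the correct conjugate pair. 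In this respect your proof is more complete than the paper's; your separate treatment of $s_{0}=s_{1}$ and the reduction to $q=1$ are cosmetic variants of what the paper's chain already covers.

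Two caveats. First, in the inhomogeneous case the balancing index must be truncated at $J=0$: when the optimal $J$ would be negative, your maximal functions satisfy $G_{0}\lesssim G_{1}$ and the bound follows directly from the $G_{0}$-tail alone; this deserves a sentence. Second, your necessity argument (a direction the paper omits entirely, despite the ``if and only if'' formulation) tacitly assumes two-sided power behaviour $\phi(\lambda)\sim\lambda^{\delta}$ at the tested scales. Assumption H1 \eqref{S.C.condition} is only a one-sided lower bound, so frequency-localized test functions cannot force the exact identity $\delta s-\frac{d}{p}=\theta\big(\delta s_{0}-\frac{d}{p_{0}}\big)+(1-\theta)\big(\delta s_{1}-\frac{d}{p_{1}}\big)$ for a general Bernstein function $\phi$ (e.g.\ $\phi(x)=x$ satisfies H1 with $\delta=\tfrac12$, yet the classical Gagliardo--Nirenberg balance involves exponent $1$, not $\tfrac12$). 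That direction should be regarded as heuristic, and the lemma itself is safest read as a sufficiency statement.
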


\begin{proof}
We only need to verify the inhomogeneous case, as the homogeneous case can be verified similarly.

Let $s^{*}=\theta s_{0}+(1-\theta)s_{1}$ and $\frac{d}{p^{*}}=\theta \frac{d}{p_{0}}+(1-\theta)\frac{d}{p_{1}}$. Then we have $s^{*}\geq s$ and $p\leq p^{*}$. Moreover,
\begin{align*}
\delta s^{*}-\frac{d}{p^{*}}=\delta s-\frac{d}{p}=\bigg(\delta s_{0}-\frac{d}{p_{0}}\bigg)\theta+\bigg(\delta s_{1}-\frac{d}{p_{1}}\bigg)(1-\theta).
\end{align*}
By combining the convexity of H\"{o}lder's inequality and Lemma \ref{embedding theorem}, we obtain
\begin{align*}
\big\|g\big\|_{F^{s,\phi}_{p,q}}\lesssim\big\|g\big\|_{F^{s^{*},\phi}_{p^{*},\infty}}
\lesssim\big\|g\big\|^{\theta}_{F^{s_{0},\phi}_{p_{0},\infty}(\mathbb{R}^{d})}
\big\|g\big\|^{1-\theta}_{F^{s_{1},\phi}_{p_{1},\infty}(\mathbb{R}^{d})}.
\end{align*}
This completes the proof.
\end{proof}

\begin{lemma}\label{M.L.Mittag-Leffer}
For $1 < p < \infty$, any $a \geq 0$, and $\sigma \in [0, 2]$, we have that
$$
\left(a + t^{\alpha} \phi(|\xi|^{2})\right)^{\frac{\sigma}{2}} E_{\alpha, 1}(-it^{\alpha} \phi(|\xi|^{2})) \in \mathcal{U}_{p}(\mathbb{R}^{d}).
$$
\end{lemma}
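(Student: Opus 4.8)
The plan is to verify the hypotheses of the H\"{o}rmander multiplier theorem (Theorem \ref{Multiplier theorem}) for the symbol $m(\xi) := (a+t^{\alpha}\phi(|\xi|^{2}))^{\frac{\sigma}{2}}E_{\alpha,1}(-it^{\alpha}\phi(|\xi|^{2}))$. It suffices to prove the pointwise Mikhlin-type bound $|D_{\xi}^{\beta}m(\xi)|\lesssim |\xi|^{-|\beta|}$ for all multi-indices with $|\beta|\le \lceil d/2\rceil+1$; integrating over the annulus $\{M<|\xi|<2M\}$ (of measure $\sim M^{d}$, on which $|\xi|\sim M$) then immediately yields $\big(\int_{M<|\xi|<2M}|D_{\xi}^{\beta}m|^{2}\,d\xi\big)^{1/2}\lesssim M^{d/2-|\beta|}$, together with the boundedness of $m$ (the case $\beta=0$).

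First I would record the scaling of the inner function. Writing $\psi(\xi)=\phi(|\xi|^{2})$, the Bernstein bound \eqref{B.S.T,func.bdd} gives $|\phi^{(n)}(u)|\lesssim u^{-n}\phi(u)$, and an induction on $|\gamma|$ (each $\partial_{\xi_{i}}$ producing a factor $2\xi_{i}\phi^{(\cdot)}$) yields $|D_{\xi}^{\gamma}\psi(\xi)|\lesssim|\xi|^{-|\gamma|}\phi(|\xi|^{2})$. Then I would set $F(u)=(a+t^{\alpha}u)^{\frac{\sigma}{2}}E_{\alpha,1}(-it^{\alpha}u)$, so that $m=F\circ\psi$, and apply Fa\`{a} di Bruno: $D_{\xi}^{\beta}m$ is a finite sum of terms $F^{(k)}(\psi)\prod_{j}D_{\xi}^{\gamma_{j}}\psi$ with $\sum_{j}|\gamma_{j}|=|\beta|$ and $1\le k\le|\beta|$. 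Using the scaling bound, each such term is $\lesssim |\xi|^{-|\beta|}\,\psi^{k}|F^{(k)}(\psi)|$, so the whole problem reduces to the uniform estimate $u^{k}|F^{(k)}(u)|\lesssim 1$ on $(0,\infty)$ for $0\le k\le\lceil d/2\rceil+1$.

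To obtain this I would split $F=GH$ with $G(u)=(a+t^{\alpha}u)^{\frac{\sigma}{2}}$ and $H(u)=E_{\alpha,1}(-it^{\alpha}u)$, and use Leibniz. The factor $G$ is elementary: $G^{(j)}(u)=c_{j}t^{j\alpha}(a+t^{\alpha}u)^{\frac{\sigma}{2}-j}$, and since $t^{\alpha}u\le a+t^{\alpha}u$ one gets $u^{j}|G^{(j)}(u)|\lesssim(a+t^{\alpha}u)^{\frac{\sigma}{2}}$. For $H$ I would prove the symbol-of-order-$(-1)$ estimate: with $v=t^{\alpha}u$ and $e(v)=E_{\alpha,1}(-iv)$, that $|e^{(m)}(v)|\lesssim v^{-m}(1+v)^{-1}$, whence $u^{m}|H^{(m)}(u)|=v^{m}|e^{(m)}(v)|\lesssim(1+v)^{-1}$. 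Combining the two, $u^{k}|F^{(k)}(u)|\lesssim(a+t^{\alpha}u)^{\frac{\sigma}{2}}(1+t^{\alpha}u)^{-1}$, which is bounded on $(0,\infty)$ precisely because $\sigma\le 2$ (it behaves like $v^{\frac{\sigma}{2}-1}$ as $v\to\infty$ and like $a^{\frac{\sigma}{2}}$ as $v\to0$); this simultaneously delivers the boundedness of $m$ at $k=0$.

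The hard part will be the Mittag-Leffler symbol estimate $|e^{(m)}(v)|\lesssim v^{-m}(1+v)^{-1}$. Near $v=0$ it is immediate since $E_{\alpha,1}$ is entire, so each $e^{(m)}$ is bounded there while $v^{-m}(1+v)^{-1}\gtrsim1$. For $v\to\infty$ I would use, on the ray $\arg(-iv)=-\pi/2$, the algebraic asymptotic expansion $E_{\alpha,1}(z)=-\sum_{n=1}^{N}z^{-n}/\Gamma(1-n\alpha)+O(|z|^{-N-1})$ implicit in Proposition \ref{X.Z.of Mitag function}, which may be differentiated termwise to give $|e^{(m)}(v)|\lesssim v^{-m-1}$. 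The delicate point is the transition at $\alpha=\tfrac12$, where $|\arg(-iv)|=\pi/2$ crosses the sector boundary $\alpha\pi$: for $\alpha\in(\tfrac12,1)$ the representation \eqref{asbe1} recorded in Remark \ref{take value E.q.} carries the extra term $\tfrac1\alpha\exp\big(\cos(\tfrac{\pi}{2\alpha})t\phi^{1/\alpha}-i\sin(\tfrac{\pi}{2\alpha})t\phi^{1/\alpha}\big)$, but since $\cos(\tfrac{\pi}{2\alpha})<0$ this term and all its $v$-derivatives are exponentially small and harmless, while for $\alpha\in(0,\tfrac12]$ only the integral term \eqref{asbe2} is present. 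The rigorous route I would follow is to differentiate the integral representations of Remark \ref{take value E.q.} under the integral sign and bound the resulting kernels uniformly in $v$, the only genuinely subtle case being $\alpha=\tfrac12$, where $\cos(\alpha\pi)=0$ makes the denominator $r^{2}+2ivr\cos(\alpha\pi)-v^{2}$ degenerate and forces a principal-value/limiting interpretation of the integral.
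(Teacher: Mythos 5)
Your proposal is correct, and it shares the paper's core ingredients (Theorem \ref{Multiplier theorem}, the bound \eqref{B.S.T,func.bdd} giving $|D^{\gamma}_{\xi}\phi(|\xi|^{2})|\lesssim|\xi|^{-|\gamma|}\phi(|\xi|^{2})$, Fa\`{a} di Bruno, and the Mittag-Leffler asymptotics of Proposition \ref{X.Z.of Mitag function}), but it is organized differently at exactly the point where the paper's own argument is weakest: the passage from $t=1$ to general $t$. The paper first proves \eqref{t=1,M.L.condition} (splitting into small frequencies, handled via the Wright-function representation, and large frequencies, handled via Remark \ref{take value E.q.}), and then invokes the dilation invariance \eqref{M.P.scailing} together with an intermediate-value choice of $\lambda^{*}$ solving $\phi(|\lambda^{*}\xi|^{2})=t^{\alpha}\phi(|\xi|^{2})$; since $\phi$ is not homogeneous, this $\lambda^{*}$ necessarily depends on $\xi$, so the resulting symbol is not a dilate of the $t=1$ symbol and \eqref{M.P.scailing} does not literally apply. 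Your reduction to the one-dimensional estimate $v^{k}|\tilde{F}^{(k)}(v)|\lesssim 1$ for $\tilde{F}(v)=(a+v)^{\sigma/2}E_{\alpha,1}(-iv)$ in the dilation-invariant variable $v=t^{\alpha}\phi(|\xi|^{2})$ bypasses this step entirely and yields the multiplier bound uniformly in $t>0$ in one stroke (and this uniformity in $t$ is what Lemma \ref{operator S} actually needs); your near-zero argument (entirety of $E_{\alpha,1}$) also replaces the paper's Wright-function computation by something simpler. One caveat: the route you label as rigorous, differentiating the integral formulas of Remark \ref{take value E.q.} under the integral sign, genuinely does break at $\alpha=\tfrac12$, as you note, because $\cos(\alpha\pi)=0$ makes the denominator $r^{2}-v^{2}$ vanish at $r=v$ (the paper's proof silently ignores this same degeneracy); so you should finish instead by your first route: the expansion $E_{\alpha,1}(z)=-\sum_{n=1}^{N}z^{-n}/\Gamma(1-n\alpha)+O(|z|^{-N-1})$ is valid on closed sectors $\mu\le|\arg z|\le\pi$ whenever $\alpha\pi/2<\mu<\min(\pi,\alpha\pi)$, and for every $\alpha\in(0,1)$ such a $\mu$ can be chosen below $\pi/2$, so the ray $\arg z=-\pi/2$ is covered uniformly, including at $\alpha=\tfrac12$; termwise differentiation is then justified by Cauchy's estimate on disks $|w-z|\le\epsilon(\alpha)|z|$ kept inside the subsector where $\exp\big(w^{1/\alpha}\big)$ decays. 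With that substitution your argument is complete, uniform in $t$, and in this respect more robust than the paper's.
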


\begin{proof}
We first prove that
\begin{align}\label{t=1,M.L.condition}
\left(a + \phi(|\xi|^{2})\right)^{\frac{\sigma}{2}} E_{\alpha, 1}(-i\phi(|\xi|^{2})) \in \mathcal{U}_{p}.
\end{align}
By Theorem \ref{Multiplier theorem}, we need to verify that for any $M > 0$,
\begin{align}\label{multiplier condition}
\sum_{|\beta| \leq \lceil\frac{d}{2}\rceil + 1} \int_{\frac{M}{2} < |\xi| < 2M} \left|M^{|\beta|} D^{\beta}_{\xi} \left[\left(a + \phi(|\xi|^{2})\right)^{\frac{\sigma}{2}} E_{\alpha, 1}(-i\phi(|\xi|^{2}))\right]\right|^{2} d\xi \lesssim M^{d}.
\end{align}
By Fa\'{a} di Bruno's formula, for any multi-index $\beta$, we have
\begin{align*}
D^{\beta}_{\xi} \phi(|\xi|^{2}) = \sum_{\frac{|\beta|}{2} \leq j \leq 2|\beta|} \phi^{(j)}(|\xi|^{2}) \prod_{i=1}^{d} \xi_{i}^{\gamma_{i}}, \quad \sum_{i=1}^{d} \gamma_{i} = 2j - |\beta|.
\end{align*}
Thus, combining \eqref{B.S.T,func.bdd}, we obtain that
$$
\left|D^{\beta}_{\xi} \phi(|\xi|^{2})\right| \lesssim |\xi|^{-2j} \phi(|\xi|^{2}) |\xi|^{\sum_{i=1}^{d} \gamma_{i}} \lesssim |\xi|^{-|\beta|} \phi(|\xi|^{2}),
$$
which implies that
\begin{align*}
\left|D_{\xi}^{\beta} \left(a + \phi(|\xi|^{2})\right)^{\frac{\sigma}{2}}\right| &\lesssim \sum_{b_{1} + b_{2} + \cdots + b_{l} = \beta} \prod_{i=1}^{l} D_{\xi}^{b_{i}} \left(a + \phi(|\xi|^{2})\right) \left(a + \phi(|\xi|^{2})\right)^{\frac{\sigma}{2} - l} \\
&\lesssim |\xi|^{-|\beta|} \left(a + \phi(|\xi|^{2})\right)^{\frac{\sigma}{2}}.
\end{align*}
For $0 < M < 1$, note that
$$
E_{\alpha, 1}(-i\phi(|\xi|^{2})) = \int_{0}^{\infty} M_{\alpha, 1}(\theta) \exp(-i\theta\phi(|\xi|^{2})) d\theta,
$$
where $M_{\alpha}(\theta)$ is the Wright function, as referenced in \cite{Kilbas,Zhou2024}. Hence, we have
\begin{align*}
&\left|D^{\beta}_{\xi} \left[\left(a + \phi(|\xi|^{2})\right)^{\frac{\sigma}{2}} E_{\alpha, 1}(-i\phi(|\xi|^{2}))\right]\right| \\
&\lesssim \sum_{\substack{\beta_{1} + \beta_{2} = \beta, \\ \frac{|\beta_{2}|}{2} \leq l \leq |\beta_{2}|}} \left|D_{\xi}^{\beta_{1}} \left(a + \phi(|\xi|^{2})\right)^{\frac{\sigma}{2}} D_{\xi}^{\beta_{2}} E_{\alpha, 1}(-i\phi(|\xi|^{2}))\right| \\
&\lesssim \sum_{\substack{\beta_{1} + \beta_{2} = \beta, \\ \frac{|\beta_{2}|}{2} \leq l \leq |\beta_{2}|}} \left(a + \phi(|\xi|^{2})\right)^{\frac{\sigma}{2} + l} \left|\int_{0}^{\infty} M_{\alpha, 1}(\theta) \theta^{|\beta_{2}|} \exp(-i\theta\phi(|\xi|^{2})) d\theta\right| \\
&\leq C(d, \alpha, \delta, \beta) M^{-|\beta|},
\end{align*}
where we use the property of the Bernstein function $\phi(x) \leq 1 \vee x$ and the convergence of the Wright function, that is
$$
\int_{0}^{\infty} M_{\alpha, 1}(\theta) \theta^{|\beta_{2}|} d\theta = \frac{\Gamma(1 + |\beta_{2}|)}{\Gamma(1 + \alpha|\beta_{2}|)}.
$$
Therefore, for $0 < M < 1$, \eqref{multiplier condition} holds.

On the other hand, we verify that \eqref{multiplier condition} holds for $M \geq 1$.

First, we verify that for any $\varrho > 0$ and $\alpha \in (\frac{1}{2}, 1)$,
\begin{align*}
\left(a + \phi(|\xi|^2)\right)^{\frac{\varrho}{2}} \exp\left(\cos\left(\frac{\pi}{2\alpha}\right) \phi(|\xi|^2)^{\frac{1}{\alpha}} - i \sin\left(\frac{\pi}{2\alpha}\right) \phi(|\xi|^2)^{\frac{1}{\alpha}}\right) \in \mathcal{U}_p.
\end{align*}
Note that at this point, $\cos\left(\frac{\pi}{2\alpha}\right) < 0$, so this can be guaranteed by
\begin{align*}
&\left|D_{\xi}^{\beta}\left[\left(a + \phi(|\xi|^2)\right)^{\frac{\varrho}{2}} \exp\left(\cos\left(\frac{\pi}{2\alpha}\right) \phi(|\xi|^2)^{\frac{1}{\alpha}} - i \sin\left(\frac{\pi}{2\alpha}\right) \phi(|\xi|^2)^{\frac{1}{\alpha}}\right)\right]\right| \\
&\lesssim \left|\sum_{\beta_1 + \beta_2 = \beta} D^{\beta_1}_{\xi}\left(a + \phi(|\xi|^2)\right)^{\frac{\varrho}{2}} D^{\beta_2}_{\xi} \exp\left(\cos\left(\frac{\pi}{2\alpha}\right) \phi(|\xi|^2)^{\frac{1}{\alpha}} - i \sin\left(\frac{\pi}{2\alpha}\right) \phi(|\xi|^2)^{\frac{1}{\alpha}}\right)\right| \\
&\lesssim \sum_{\substack{\beta_1 + \beta_2 = \beta, \\ \frac{|\beta_2|}{2} \leq l \leq |\beta_2|}} |\xi|^{-|\beta|} \left|\left(a + \phi(|\xi|^2)\right)^{\frac{\varrho}{2} + \frac{l}{\alpha}} \exp\left(\cos\left(\frac{\pi}{2\alpha}\right) \phi(|\xi|^2)^{\frac{l}{\alpha}} - i \sin\left(\frac{\pi}{2\alpha}\right) \phi(|\xi|^2)^{\frac{1}{\alpha}}\right)\right| \\
&\lesssim |\xi|^{-|\beta|}.
\end{align*}

Next, we verify that for any $\alpha \in (0,\frac{1}{2})$ and $\alpha\in (\frac{1}{2},1)$, $M \geq 1$, $\sigma \in [0, 2]$,
\begin{align}\label{integral term}
\left(a + \phi(|\xi|^2)\right)^{\frac{\sigma}{2}} \int_{0}^{\infty} \frac{e^{-r^{\frac{1}{\alpha}}} \phi(|\xi|^2)}{r^2 + 2i \phi(|\xi|^2) r \cos(\alpha \pi) - (\phi(|\xi|^2))^2} \, dr
\end{align}
satisfies \eqref{multiplier condition}. Note that
\begin{align*}
&\left|D_{\xi}^{\beta} \int_{0}^{\infty} \frac{e^{-r^{\frac{1}{\alpha}}} \left(a + \phi(|\xi|^2)\right)^{\frac{\sigma}{2}} \phi(|\xi|^2)}{r^2 + 2i \phi(|\xi|^2) r \cos(\alpha \pi) - (\phi(|\xi|^2))^2} \, dr\right| \\
&\lesssim \sum_{\beta_1 + \beta_2 = \beta} \int_{0}^{\infty} \exp(-r^{\frac{1}{\alpha}}) \left|D_{\xi}^{\beta_1} \left(a + \phi(|\xi|^2)\right)^{\frac{\sigma}{2}} \phi(|\xi|^2)\right|\left|D_{\xi}^{\beta_2} \left(\frac{1}{r^2 + 2i \phi(|\xi|^2) r \cos(\alpha \pi) - (\phi(|\xi|^2))^2}\right)\right| \, dr \\
&\lesssim \sum_{\substack{\beta_1 + \beta_2 = \beta, \\ \frac{|\beta_2|}{2} \leq l \leq |\beta_2|}} |\xi|^{-|\beta|} \int_{0}^{\infty} \exp(-r^{\frac{1}{\alpha}}) \left(a + \phi(|\xi|^2)\right)^{\frac{\sigma}{2}} \phi(|\xi|^2)\left|\frac{r \cos(\alpha \pi) \phi(|\xi|^2)^l - \phi(|\xi|^2)^{2l}}{\phi(|\xi|^2)^{2(l + 1)}}\right| \, dr \\
&\leq C(\alpha, \beta, d, \delta) |\xi|^{-|\beta|},
\end{align*}
where we use the property of the Bernstein function, i.e., $\phi(x) \geq 1 \wedge x$, which shows that \eqref{integral term} satisfies \eqref{multiplier condition}.

For $\alpha=\frac{1}{2}$,
$$
E_{\frac{1}{2},1}(-i\phi(|\xi|^{2}))=\exp(-(\phi(|\xi|^{2}))^{2})-i\exp(-(\phi(|\xi|^{2}))^{2})\operatorname{erfi}(\phi(|\xi|^{2})).
$$
Clearly, $\left(a + \phi(|\xi|^2)\right)^{\frac{\sigma}{2}}\exp(-(\phi(|\xi|^{2}))^{2})\in\mathscr{U}_{p}(\mathbb{R}^{d})$; hence it remains to verify that for $M \geq 1$, $\sigma \in [0, 2]$,
$$
\left(a + \phi(|\xi|^2)\right)^{\frac{\sigma}{2}}\exp(-(\phi(|\xi|^{2}))^{2})\operatorname{erfi}(\phi(|\xi|^{2}))
$$
satisfies \eqref{multiplier condition}.  
First, note that when $M\geq 1$ and $|\xi|\sim M$, the asymptotic behaviour of the imaginary error function yields
$$
\operatorname{erfi}(\phi(|\xi|^{2}))\sim\frac{\exp((\phi(|\xi|^{2})^{2})}{\sqrt{\pi}\phi(|\xi|^{2})}\bigg(1+\frac{1}{2(\phi(|\xi|^{2}))^{2}}+O\!\left(\frac{1}{(\phi(|\xi|^{2}))^{3}}\right)\bigg),
$$
which implies that for $|\xi|\in[M/2,M]$,
$$
\left(a + \phi(|\xi|^2)\right)^{\frac{\sigma}{2}}\exp(-(\phi(|\xi|^{2}))^{2})\operatorname{erfi}(\phi(|\xi|^{2}))\sim\frac{\left(a + \phi(|\xi|^2)\right)^{\frac{\sigma}{2}}}{(\phi(|\xi|^{2}))}<\infty,
$$  
and 
\begin{align*}
	&\big|D^{\beta}_{\xi}\big[\left(a + \phi(|\xi|^2)\right)^{\frac{\sigma}{2}}\exp(-(\phi(|\xi|^{2}))^{2})\operatorname{erfi}(\phi(|\xi|^{2}))\big]\big|\\
	&\lesssim \bigg|D^{\beta}_{\xi}\big[\frac{\left(a + \phi(|\xi|^2)\right)^{\frac{\sigma}{2}}}{(\phi(|\xi|^{2}))}\big]\bigg|\\
	&\lesssim\sum_{\beta_1 + \beta_2 = \beta}\big|D^{\beta_{1}}_{\xi}\left(a + \phi(|\xi|^2)\right)^{\frac{\sigma}{2}}\big|\big|D_{\xi}^{\beta_{2}}\!\left(\frac{1}{\phi(|\xi|^{2})}\right)\big|\\
	&\lesssim|\xi|^{-|\beta|}.
\end{align*}
Therefore, from the above we conclude that \eqref{t=1,M.L.condition} holds.

Additionally, noting that $\phi(0^+) = 0$ and under Assumption H1 \eqref{S.C.condition}, for any $\xi \in \mathbb{R}^d$, consider the function $f(\lambda) = \phi(|\lambda \xi|^2)$. For every $t > 0$,
$$
\frac{f(1 + t^{\frac{\alpha}{2\delta}})}{t^{\alpha} \phi(|\xi|^2)} \geq \frac{(1 + t^{\frac{\alpha}{2\delta}})^{2\delta}}{t^{\alpha}} > 1,
$$
so by the Intermediate Value Theorem, we can take $\lambda^* \in (0, 1 + t^{\frac{\alpha}{2\delta}})$ such that $f(\lambda^*) = t^{\alpha} \phi(|\xi|^2)$. Combining \eqref{M.P.scailing} and \eqref{t=1,M.L.condition}, we obtain that for any $t > 0$,
\begin{align*}
\left\|\left(a + t^{\alpha} \phi(|\xi|^2)\right)^{\frac{\sigma}{2}} E_{\alpha, 1}(-i t^{\alpha} \phi(|\xi|^2))\right\|_{\mathcal{U}_p} &= \left\|\left(a + \phi(|\lambda^* \xi|^2)\right)^{\frac{\sigma}{2}} E_{\alpha, 1}(-i \phi(|\lambda^* \xi|^2))\right\|_{\mathcal{U}_p} \\
&= \left\|\left(a + \phi(|\xi|^2)\right)^{\frac{\sigma}{2}} E_{\alpha, 1}(-i \phi(|\xi|^2))\right\|_{\mathcal{U}_p},
\end{align*}
thus we conclude that
$$
\left(a + t^{\alpha} \phi(|\xi|^2)\right)^{\frac{\sigma}{2}} E_{\alpha, 1}(-i t^{\alpha} \phi(|\xi|^2)) \in \mathcal{U}_p(\mathbb{R}^d).
$$
\end{proof}
\begin{lemma}\label{M.L.Mittag-Leffer1}
For $1 < p < \infty$, any $a \geq 0$, and $\sigma \in [0, 4]$, we have that
$$
\left(a + t^{\alpha} \phi(|\xi|^{2})\right)^{\frac{\sigma}{2}} E_{\alpha, \alpha}(-it^{\alpha} \phi(|\xi|^{2})) \in \mathcal{U}_{p}(\mathbb{R}^{d}).
$$
\end{lemma}
\begin{proof}
	
	Analogously to Lemma~\ref{M.L.Mittag-Leffer}, it suffices to verify that for any $M > 0$,
	\begin{align}\label{multiplier condition1}
		\sum_{|\beta| \leq \lceil\frac{d}{2}\rceil + 1} \int_{\frac{M}{2} < |\xi| < 2M} \left|M^{|\beta|} D^{\beta}_{\xi} \left[\left(a + \phi(|\xi|^{2})\right)^{\frac{\sigma}{2}} E_{\alpha, \alpha}(-i\phi(|\xi|^{2}))\right]\right|^{2} d\xi \lesssim M^{d}.
	\end{align}
	
	For $0<M<1$, the subordination principle together with the arguments in Lemma~\ref{M.L.Mittag-Leffer} yields \eqref{multiplier condition1}.  
	
	For $M>1$, if $\alpha\in(1/2,1)$, applying Faà di Bruno's formula, similarly to Lemma~\ref{M.L.Mittag-Leffer}, we obtain that  
	$$
	\frac{1}{\alpha}\exp\!\big(-\frac{i\pi}{2}\frac{1-\alpha}{\alpha}\big)
	(\phi(|\xi|^{2}))^{\frac{1-\alpha}{\alpha}}
	\exp\!\big(e^{-i\frac{\pi}{2\alpha}}\phi(|\xi|^{2})^{\frac{1}{\alpha}}\big)
	$$
	satisfies \eqref{multiplier condition1}.  
	By the same reasoning, for $\alpha\in(0,1)$ with $\alpha\neq 1/2$ and $\sigma\in[0,4]$,  
	$$
	\left(a + \phi(|\xi|^2)\right)^{\frac{\sigma}{2}}\int_{0}^{\infty}
	\frac{r^{\frac{1}{\alpha}}e^{-r^{\frac{1}{\alpha}}}}{r^{2}
		+2i\phi(|\xi|^{2})r
		\cos(\alpha\pi)-(\phi(|\xi|^{2}))^{2}}\,dr
	$$  
	satisfies \eqref{multiplier condition1}.
	
	Furthermore, note that when $\alpha=1/2$,  
	$$
	E_{\frac{1}{2},\frac{1}{2}}(-i\phi(|\xi|^{2}))=\frac{1}{\sqrt{\pi}}-i\phi(|\xi|^{2})\exp(-(\phi(|\xi|^{2}))^{2})-\phi(|\xi|^{2})\exp(-(\phi(|\xi|^{2}))^{2})\operatorname{erfi}(\phi(|\xi|^{2})).
	$$  
	By Faà di Bruno's formula, it is clear that $\left(a + \phi(|\xi|^2)\right)^{\frac{\sigma}{2}}\phi(|\xi|^{2})\exp(-(\phi(|\xi|^{2}))^{2})\in\mathscr{U}_{p}(\mathbb{R}^{d})$; hence it remains to verify that for $M \geq 1$, $\sigma \in [0, 4]$,  
	$$
	\left(a + \phi(|\xi|^2)\right)^{\frac{\sigma}{2}}\bigg(\frac{1}{\sqrt{\pi}}-\phi(|\xi|^{2})\exp(-(\phi(|\xi|^{2}))^{2})\operatorname{erfi}(\phi(|\xi|^{2}))\bigg)
	$$  
	satisfies \eqref{multiplier condition1}.  
	
	Observe that when $M\geq 1$ and $|\xi|\sim M$, the asymptotic behaviour of the imaginary error function gives  
	$$
	\operatorname{erfi}(\phi(|\xi|^{2}))\sim\frac{\exp((\phi(|\xi|^{2})^{2})}{\sqrt{\pi}\phi(|\xi|^{2})}\bigg(1+\frac{1}{2(\phi(|\xi|^{2}))^{2}}+O\!\left(\frac{1}{(\phi(|\xi|^{2}))^{3}}\right)\bigg),
	$$  
	which implies that for $|\xi|\in[M/2,M]$,  
	$$
	\bigg|\left(a + \phi(|\xi|^2)\right)^{\frac{\sigma}{2}}\bigg(\frac{1}{\sqrt{\pi}}-\phi(|\xi|^{2})\exp(-(\phi(|\xi|^{2}))^{2})\operatorname{erfi}(\phi(|\xi|^{2}))\bigg)\bigg|\sim\frac{\left(a + \phi(|\xi|^2)\right)^{\frac{\sigma}{2}}}{(\phi(|\xi|^{2}))^{2}}<\infty.
	$$  
	\begin{align*}
		&\bigg|D^{\beta}_{\xi}\bigg[\left(a + \phi(|\xi|^2)\right)^{\frac{\sigma}{2}}\bigg(\frac{1}{\sqrt{\pi}}-\phi(|\xi|^{2})\exp(-(\phi(|\xi|^{2}))^{2})\operatorname{erfi}(\phi(|\xi|^{2}))\bigg)\bigg]\bigg|\\
		&\lesssim \bigg|D^{\beta}_{\xi}\big[\frac{\left(a + \phi(|\xi|^2)\right)^{\frac{\sigma}{2}}}{(\phi(|\xi|^{2}))^{2}}\big]\bigg|\\
		&\lesssim\sum_{\beta_1 + \beta_2 = \beta}\big|D^{\beta_{1}}_{\xi}\left(a + \phi(|\xi|^2)\right)^{\frac{\sigma}{2}}\big|\big|D_{\xi}^{\beta_{2}}\!\left(\frac{1}{(\phi(|\xi|^{2}))^{2}}\right)\big|\\
		&\lesssim|\xi|^{-|\beta|}.
	\end{align*}  
	In conclusion, \eqref{multiplier condition1} holds.
	
\end{proof}

Next, we establish some $L^{p}-L^{r}$ estimates and Sobolev estimates for the solution operators $\mathcal{S}_{\alpha,\phi}(t)$ and $\mathcal{P}_{\alpha,\phi}(t)$.

\begin{lemma}\label{operator S}
Let $\sigma \in [0, 2]$. The operator $\mathcal{S}_{\alpha,\phi}(t)$ satisfies the following properties:
\begin{align*}
\big\|\mathcal{S}_{\alpha,\phi}(t)g\big\|_{L^{r}} &\lesssim t^{-\frac{\alpha}{2\delta}\left(\frac{d}{p}-\frac{d}{r}\right)}\big\|g\big\|_{L^{p}}, \quad \text{where } 1 < p \leq r <
\begin{cases}
\frac{dp}{d - 2\delta}, & d \geq 2\delta, \\
\infty, & d < 2\delta,
\end{cases}
\end{align*}
and
\begin{align*}
\big\|\big(I - \phi(\Delta)\big)^{\frac{\sigma}{2}}\mathcal{S}_{\alpha,\phi}(t)g\big\|_{L^{r}} &\lesssim t^{-\frac{\alpha}{2\delta}\left(\delta\sigma + \frac{d}{p} - \frac{d}{r}\right)}\big\|g\big\|_{L^{p}}, \quad \text{where } 1 < p \leq r <
\begin{cases}
\frac{dp}{d - (2 - \sigma)\delta}, & d \geq (2 - \sigma)\delta, \\
\infty, & d < (2 - \sigma)\delta.
\end{cases}
\end{align*}
\end{lemma}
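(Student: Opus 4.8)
The plan is to derive the Sobolev estimate first and recover the plain $L^p$-$L^r$ estimate as its $\sigma=0$ specialization, since $(I+\phi(\Delta))^{0}=I$ and the range $1<p\le r<\frac{dp}{d-(2-\sigma)\delta}$ collapses to $1<p\le r<\frac{dp}{d-2\delta}$. The whole estimate should be assembled from two independent mechanisms: a decay/smoothing gain at the fixed integrability exponent $p$, supplied by the Mittag-Leffler multiplier through Lemma \ref{M.L.Mittag-Leffer}, and a gain of integrability $p\to r$, supplied by the embedding and Gagliardo-Nirenberg machinery of Lemma \ref{embedding theorem} and Lemma \ref{G.N.inequality}. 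The exponent $\frac{\alpha}{2\delta}(\delta\sigma+\frac dp-\frac dr)$ is expected to appear automatically once the scaling condition $\delta\nu-\frac dp=\delta\sigma-\frac dr$ of the embedding is matched against the decay rate $t^{-\alpha\nu/2}$ of the smoothing step.

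First I would extract the smoothing. Applying Lemma \ref{M.L.Mittag-Leffer} with $a=0$ and exponent $\nu\in[0,2]$ in place of $\sigma$, the multiplier $(t^\alpha\phi(|\xi|^2))^{\nu/2}E_{\alpha,1}(-it^\alpha\phi(|\xi|^2))$ belongs to $\mathcal{U}_p$, and the scaling step at the end of that proof (using \eqref{M.P.scailing} and the auxiliary point $\lambda^*$) shows that its $\mathcal{U}_p$-norm does not depend on $t$. Factoring out $t^{\alpha\nu/2}$ gives
\begin{align*}
\big\|\mathcal{F}^{-1}\!\big(\phi(|\xi|^2)^{\nu/2}\,\mathcal{F}\mathcal{S}_{\alpha,\phi}(t)g\big)\big\|_{L^p}\lesssim t^{-\frac{\alpha\nu}{2}}\big\|g\big\|_{L^p},
\end{align*}
which, by $L^p\sim\dot F^{\phi,0}_{p,2}$ (Remark \ref{some remark of bernstein function}(i)) and the homogeneous $\phi$-Bessel isomorphism (Remark \ref{some remark of bernstein function}(iv)), reads $\|\mathcal{S}_{\alpha,\phi}(t)g\|_{\dot F^{\phi,\nu}_{p,2}}\lesssim t^{-\alpha\nu/2}\|g\|_{L^p}$. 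Choosing $\nu=\sigma+\frac1\delta(\frac dp-\frac dr)$ so that $\delta\nu-\frac dp=\delta\sigma-\frac dr$ with $p<r$, the embedding $\dot F^{\phi,\nu}_{p,2}\hookrightarrow\dot F^{\phi,\sigma}_{r,2}$ of Lemma \ref{embedding theorem} and a second application of the $\phi$-Bessel isomorphism produce the homogeneous Sobolev estimate with decay $t^{-\alpha\nu/2}=t^{-\frac{\alpha}{2\delta}(\delta\sigma+\frac dp-\frac dr)}$; the special case $\sigma=0$ is exactly the first asserted inequality.

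For the inhomogeneous potential $(I+\phi(\Delta))^{\sigma/2}$ actually appearing in the statement I would run the same scheme through the inhomogeneous spaces. Using $L^r\sim F^{\phi,0}_{r,2}$ with Remark \ref{some remark of bernstein function}(iii) and (iv), the left-hand side equals $\|\mathcal{S}_{\alpha,\phi}(t)g\|_{F^{\phi,\sigma}_{r,2}}$, which I interpolate by the Gagliardo-Nirenberg inequality (Lemma \ref{G.N.inequality}) between two endpoints $F^{s_i,\phi}_{p,\infty}$ at the fixed integrability $p$, with $\theta s_0+(1-\theta)s_1=\sigma+\frac1\delta(\frac dp-\frac dr)$. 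Each endpoint factor is estimated through $F^{s_i,\phi}_{p,2}\hookrightarrow F^{s_i,\phi}_{p,\infty}$ and Lemma \ref{M.L.Mittag-Leffer}, Remark \ref{M.L.Mittag-Leffer1} with $a=1$, after dominating $(1+\phi(|\xi|^2))^{s_i/2}$ by $t^{-\alpha s_i/2}(1+t^\alpha\phi(|\xi|^2))^{s_i/2}$ via the elementary inequality $1+\phi(|\xi|^2)\le t^{-\alpha}\big(1+t^\alpha\phi(|\xi|^2)\big)$ valid for $0<t\le1$; this gives $\|\mathcal{S}_{\alpha,\phi}(t)g\|_{F^{s_i,\phi}_{p,\infty}}\lesssim t^{-\alpha s_i/2}\|g\|_{L^p}$, and Lemma \ref{G.N.inequality} recombines the factors into the claimed rate. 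The admissibility of the endpoints, namely $s_0,s_1\in[0,2]$ as required by Lemma \ref{M.L.Mittag-Leffer}, amounts to $\sigma+\frac1\delta(\frac dp-\frac dr)\le2$, i.e. $\frac dp-\frac dr\le(2-\sigma)\delta$, which one checks is exactly guaranteed by the stated range $r<\frac{dp}{d-(2-\sigma)\delta}$ (and is vacuous when $d<(2-\sigma)\delta$).

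I expect the crux to be the last passage from the scale-covariant Mittag-Leffler multiplier to the inhomogeneous Bessel potential: since $(I+\phi(\Delta))^{\sigma/2}$ does not share the $t^\alpha$-scaling of $\mathcal{S}_{\alpha,\phi}(t)$, one must verify that the auxiliary symbol $\big(t^\alpha(1+\phi(|\xi|^2))/(1+t^\alpha\phi(|\xi|^2))\big)^{\sigma/2}$ is a H\"ormander multiplier with bounds uniform in $t\in(0,1]$. This forces a reuse of the Fa\`a di Bruno computation and the Bernstein-function bound \eqref{B.S.T,func.bdd} from the proof of Lemma \ref{M.L.Mittag-Leffer}, and it is precisely here that the constraint $\sigma\in[0,2]$ (equivalently $\nu\le2$) is genuinely needed; the complementary regime $t\ge1$, carrying the faster decay, is handled by the same decomposition with the reverse comparison.
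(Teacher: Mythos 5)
Your toolkit is the same as the paper's (Lemmas \ref{M.L.Mittag-Leffer}, \ref{embedding theorem}, \ref{G.N.inequality}), but your routing is genuinely different. The paper proves both inequalities in a single stroke: it applies the Gagliardo--Nirenberg inequality with \emph{both} endpoints at the exponent $p$,
\begin{align*}
\big\|(I+\phi(\Delta))^{\frac{\sigma}{2}}\mathcal{S}_{\alpha,\phi}(t)g\big\|_{L^{r}}
\lesssim \big\|(I+\phi(\Delta))\mathcal{S}_{\alpha,\phi}(t)g\big\|_{L^{p}}^{\theta}\,
\big\|(I+\phi(\Delta))^{\frac{\sigma}{2}}\mathcal{S}_{\alpha,\phi}(t)g\big\|_{L^{p}}^{1-\theta},
\end{align*}
then writes $I+\phi(\Delta)=t^{-\alpha}\big(t^{\alpha}+t^{\alpha}\phi(\Delta)\big)$ and invokes Lemma \ref{M.L.Mittag-Leffer} with $a=t^{\alpha}$, whereas you do smoothing at fixed $p$ with the homogeneous symbol ($a=0$) followed by the embedding of Lemma \ref{embedding theorem}, and treat the inhomogeneous potential by Gagliardo--Nirenberg with $a=1$ multipliers plus the comparison $1+\phi\le t^{-\alpha}(1+t^{\alpha}\phi)$ for $t\le1$. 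For the first inequality your argument is correct and has a real advantage: with $a=0$ (and $a=1$) the symbol is a function of $t^{\alpha}\phi(|\xi|^{2})$ alone, so the $t$-uniformity of the H\"ormander constants is transparent. Two caveats: the identification $\|\mathcal{F}^{-1}(\phi(|\xi|^{2})^{\nu/2}\mathcal{F}u)\|_{L^{p}}\sim\|u\|_{\dot{F}^{\nu,\phi}_{p,2}}$ is a \emph{homogeneous} lifting property which the paper records only in inhomogeneous form (Remark \ref{some remark of bernstein function}(iii)--(iv)), so it needs its own routine multiplier proof; and the stated range $r<dp/(d-(2-\sigma)\delta)$ actually gives $\frac{d}{p}-\frac{d}{r}<\frac{(2-\sigma)\delta}{p}$, which is strictly stronger than your requirement $\sigma+\frac{1}{\delta}(\frac{d}{p}-\frac{d}{r})\le2$, so ``exactly guaranteed'' is inaccurate, though harmless.

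The genuine gap is your final claim that the regime $t\ge1$ ``is handled by the same decomposition with the reverse comparison.'' For $t\ge1$ the reverse comparison $1+\phi(|\xi|^{2})\le 1+t^{\alpha}\phi(|\xi|^{2})$ extracts no power of $t$ at all: running your scheme with it yields only $\big\|(I+\phi(\Delta))^{\frac{\sigma}{2}}\mathcal{S}_{\alpha,\phi}(t)g\big\|_{L^{r}}\lesssim\|g\|_{L^{p}}$, which for $\sigma>0$ is strictly weaker than the asserted rate $t^{-\frac{\alpha}{2\delta}(\delta\sigma+\frac{d}{p}-\frac{d}{r})}\to0$. Moreover, this gap cannot be filled, because for $\sigma>0$ the second inequality is false for large $t$: take $p=r=2$ (admissible whenever $\sigma<2$), fix a small $u_{0}>0$ with $E_{\alpha,1}(-iu_{0})\neq0$, and let $\hat{g}$ be supported in the nonempty open set $\{\xi:\,|t^{\alpha}\phi(|\xi|^{2})-u_{0}|<\varepsilon\}$; then Plancherel, continuity of $E_{\alpha,1}$, and $(1+\phi(|\xi|^{2}))^{\sigma/2}\ge1$ give
\begin{align*}
\big\|(I+\phi(\Delta))^{\frac{\sigma}{2}}\mathcal{S}_{\alpha,\phi}(t)g\big\|_{L^{2}}
\geq \tfrac{1}{2}\big|E_{\alpha,1}(-iu_{0})\big|\,\big\|g\big\|_{L^{2}},
\end{align*}
with a lower bound independent of $t$, incompatible with decay $t^{-\frac{\alpha\sigma}{2}}$ as $t\to\infty$. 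The paper's own proof conceals exactly this problem: applying Lemma \ref{M.L.Mittag-Leffer} with $a=t^{\alpha}$ treats its $\mathcal{U}_{p}$-constant as uniform in $a$, yet the symbol $(t^{\alpha}+t^{\alpha}\phi(|\xi|^{2}))^{\sigma/2}E_{\alpha,1}(-it^{\alpha}\phi(|\xi|^{2}))$ equals $t^{\alpha\sigma/2}$ at $\xi=0$, so that constant is at least $t^{\alpha\sigma/2}$ and blows up as $t\to\infty$. Your restriction to $0<t\le1$ is therefore the honest scope of the second estimate (and is all the local theory, Theorems \ref{Local Well-posedness 1} and \ref{Local Well-posedness 2}, actually uses); an estimate valid for all $t>0$ holds only for your homogeneous variant, with $\phi(|\xi|^{2})^{\sigma/2}$ in place of $(1+\phi(|\xi|^{2}))^{\sigma/2}$.
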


\begin{proof}
From Remark \ref{some remark of bernstein function}, Lemma \ref{M.L.Mittag-Leffer}, and the Gagliardo-Nirenberg inequality in Lemma \ref{G.N.inequality}, we have
\begin{align*}
\big\|\mathcal{S}_{\alpha,\phi}(t)g\big\|_{L^{r}} &\lesssim
\big\|\big(I - \phi(\Delta)\big)\mathcal{S}_{\alpha,\phi}(t)g\big\|_{L^{p}}^{\theta}
\big\|\mathcal{S}_{\alpha,\phi}(t)g\big\|_{L^{p}}^{1 - \theta} \\
&\lesssim t^{-\alpha\theta}\big\|\big(t^{\alpha} -   t^{\alpha}\phi(\Delta)\big)\mathcal{S}_{\alpha,\phi}(t)g\big\|_{L^{p}}^{\theta}
\big\|\mathcal{S}_{\alpha,\phi}(t)g\big\|_{L^{p}}^{1 - \theta} \\
&\lesssim t^{-\frac{\alpha}{2\delta}\left(\frac{d}{p} - \frac{d}{r}\right)}\big\|g\big\|_{L^{p}},
\end{align*}
where we use the condition
$$
\frac{d}{r} = \theta\left(\frac{d}{p} - 2\delta\right) + (1 - \theta)\frac{d}{p}.
$$
Similarly, we also have
\begin{align*}
&\big\|\big(I - \phi(\Delta)\big)^{\frac{\sigma}{2}}\mathcal{S}_{\alpha,\phi}(t)g\big\|_{L^{r}} \\
&\lesssim
\big\|\big(I - \phi(\Delta)\big)\mathcal{S}_{\alpha,\phi}(t)g\big\|_{L^{p}}^{\theta}
\big\|\big(I - \phi(\Delta)\big)^{\frac{\sigma}{2}}\mathcal{S}_{\alpha,\phi}(t)g\big\|_{L^{p}}^{1 - \theta} \\
&\lesssim t^{-\alpha\theta - \frac{\alpha\sigma}{2}(1 - \theta)}\big\|\big(t^{\alpha} - t^{\alpha}\phi(\Delta)\big)\mathcal{S}_{\alpha,\phi}(t)g\big\|_{L^{p}}^{\theta}
\big\|\big(t^{\alpha} - t^{\alpha}\phi(\Delta)\big)^{\frac{\sigma}{2}}\mathcal{S}_{\alpha,\phi}(t)g\big\|_{L^{p}}^{1 - \theta} \\
&\lesssim t^{-\frac{\alpha}{2\delta}\left(\delta\sigma + \frac{d}{p} - \frac{d}{r}\right)}\big\|g\big\|_{L^{p}},
\end{align*}
where we use the condition
$$
\frac{d}{r} - \delta\sigma = \theta\left(\frac{d}{p} - 2\delta\right) + (1 - \theta)\left(\frac{d}{p} - \sigma\delta\right).
$$
\end{proof}
\begin{lemma}\label{operator P}
Let $\sigma \in [0, 4]$. The operator $t^{\alpha-1}\mathcal{P}_{\alpha,\phi}(t)$ satisfies the following properties:
\begin{align*}
\big\|t^{\alpha-1}\mathcal{P}_{\alpha,\phi}(t)g\big\|_{L^{r}} &\lesssim t^{\alpha-1-\frac{\alpha}{2\delta}\left(\frac{d}{p}-\frac{d}{r}\right)}\big\|g\big\|_{L^{p}}, \quad \text{where } 1 < p \leq r <
\begin{cases}
\frac{dp}{d - 2\delta}, & d \geq 2\delta, \\
\infty, & d < 2\delta,
\end{cases}
\end{align*}
and
\begin{align*}
\big\|\big(I + \phi(\Delta)\big)^{\frac{\sigma}{2}}t^{\alpha-1}\mathcal{P}_{\alpha,\phi}(t)g\big\|_{L^{r}} &\lesssim t^{\alpha-1-\frac{\alpha}{2\delta}\left(\delta\sigma + \frac{d}{p} - \frac{d}{r}\right)}\big\|g\big\|_{L^{p}},
\end{align*}
where
$
1 < p \leq r <
\begin{cases}
\frac{dp}{d - (2 - \sigma)\delta}, & d \geq (2 - \sigma)\delta, \\
\infty, & d < (2 - \sigma)\delta.
\end{cases}
$
\end{lemma}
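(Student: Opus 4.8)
The plan is to prove Lemma \ref{operator P} by running the argument of Lemma \ref{operator S} essentially verbatim, with three changes: the symbol $E_{\alpha,1}(-it^{\alpha}\phi(|\xi|^{2}))$ is replaced by $E_{\alpha,\alpha}(-it^{\alpha}\phi(|\xi|^{2}))$, the uniform multiplier bound of Lemma \ref{M.L.Mittag-Leffer} is replaced by its $E_{\alpha,\alpha}$-counterpart recorded in Remark \ref{M.L.Mittag-Leffer1}, and the scalar factor $t^{\alpha-1}$ is simply pulled outside every estimate. Since $t^{\alpha-1}$ does not interact with the spatial multiplier, it suffices to bound $\|\mathcal{P}_{\alpha,\phi}(t)g\|_{L^{r}}$ and $\|(I+\phi(\Delta))^{\sigma/2}\mathcal{P}_{\alpha,\phi}(t)g\|_{L^{r}}$ and then multiply through by $t^{\alpha-1}$.

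First I would record the two base $L^{p}$-bounds that drive everything. Using $F^{\phi,0}_{p,2}\sim L^{p}$ from Remark \ref{some remark of bernstein function}(i) together with Remark \ref{M.L.Mittag-Leffer1} (taking $a=0$, $\sigma=0$), the symbol $E_{\alpha,\alpha}(-it^{\alpha}\phi(|\xi|^{2}))$ lies in $\mathcal{U}_{p}$ uniformly in $t>0$, so $\|\mathcal{P}_{\alpha,\phi}(t)g\|_{L^{p}}\lesssim\|g\|_{L^{p}}$; likewise, for every $\sigma\in[0,2]$ (which sits inside the range $[0,4]$ permitted by Remark \ref{M.L.Mittag-Leffer1}) one has $\|(t^{\alpha}+t^{\alpha}\phi(\Delta))^{\sigma/2}\mathcal{P}_{\alpha,\phi}(t)g\|_{L^{p}}\lesssim\|g\|_{L^{p}}$ uniformly in $t$. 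These are the uniform-in-$t$ inputs that replace the two $\mathcal{S}$-bounds used in Lemma \ref{operator S}.

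For the first estimate I would apply the Gagliardo-Nirenberg inequality of Lemma \ref{G.N.inequality} in the form $\|\mathcal{P}_{\alpha,\phi}(t)g\|_{L^{r}}\lesssim\|(I+\phi(\Delta))\mathcal{P}_{\alpha,\phi}(t)g\|_{L^{p}}^{\theta}\|\mathcal{P}_{\alpha,\phi}(t)g\|_{L^{p}}^{1-\theta}$, where $\theta=\frac{1}{2\delta}(\frac{d}{p}-\frac{d}{r})$ is fixed by the scaling relation $\frac{d}{r}=\theta(\frac{d}{p}-2\delta)+(1-\theta)\frac{d}{p}$, and the constraint $r<dp/(d-2\delta)$ (resp. $r<\infty$ when $d<2\delta$) guarantees $\theta\in[0,1)$. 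Writing $(I+\phi(\Delta))\mathcal{P}_{\alpha,\phi}(t)g=t^{-\alpha}(t^{\alpha}+t^{\alpha}\phi(\Delta))\mathcal{P}_{\alpha,\phi}(t)g$ and invoking the two base bounds produces the factor $t^{-\alpha\theta}=t^{-\frac{\alpha}{2\delta}(\frac{d}{p}-\frac{d}{r})}$; multiplying by $t^{\alpha-1}$ yields the claimed exponent. For the Sobolev estimate I would instead interpolate between $(I+\phi(\Delta))\mathcal{P}_{\alpha,\phi}(t)g$ and $(I+\phi(\Delta))^{\sigma/2}\mathcal{P}_{\alpha,\phi}(t)g$ at the $L^{p}$ level under the scaling relation $\frac{d}{r}-\delta\sigma=\theta(\frac{d}{p}-2\delta)+(1-\theta)(\frac{d}{p}-\sigma\delta)$, which gives $\theta=\frac{1}{\delta(2-\sigma)}(\frac{d}{p}-\frac{d}{r})$; a short computation then shows that $-\alpha\theta-\frac{\alpha\sigma}{2}(1-\theta)=-\frac{\alpha}{2\delta}(\delta\sigma+\frac{d}{p}-\frac{d}{r})$, so after factoring out $t^{\alpha-1}$ the asserted power of $t$ appears.

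The only genuinely new analytic input relative to Lemma \ref{operator S} is Remark \ref{M.L.Mittag-Leffer1}; once it is granted, the proof is routine bookkeeping. The point requiring the most care is therefore confirming that every multiplier produced by pulling out $t^{\alpha}$, namely $(t^{\alpha}+t^{\alpha}\phi(\Delta))^{\sigma/2}E_{\alpha,\alpha}(-it^{\alpha}\phi(|\xi|^{2}))$, stays within the admissible exponent range of that remark so that its $\mathcal{U}_{p}$-norm is uniform in $t$ — and since we only ever use $\sigma\in[0,2]\subset[0,4]$ this is automatic. The remaining work is simply the verification of the two $(p,r)$-ranges and the two scaling identities for $\theta$, exactly as in Lemma \ref{operator S}.
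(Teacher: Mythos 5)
Your proposal is correct, and the estimates, scaling relations for $\theta$, and exponent computations all check out; but it is not the route the paper takes. The paper proves Lemma \ref{operator P} by a one-line reduction: it observes the symbol identity
\begin{equation*}
\partial_{t}^{1-\alpha}E_{\alpha,1}(-it^{\alpha}\phi(|\xi|^{2}))
=\frac{d}{dt}\big(g_{\alpha}\ast E_{\alpha,1}(-it^{\alpha}\phi(|\xi|^{2}))\big)
=t^{\alpha-1}E_{\alpha,\alpha}(-it^{\alpha}\phi(|\xi|^{2})),
\end{equation*}
i.e.\ $t^{\alpha-1}\mathcal{P}_{\alpha,\phi}(t)g=\frac{d}{dt}\big(g_{\alpha}\ast\mathcal{S}_{\alpha,\phi}(t)g\big)$, and then asserts that the estimates follow from Lemma \ref{operator S}. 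You instead re-run the Lemma \ref{operator S} machinery directly on the symbol $E_{\alpha,\alpha}(-it^{\alpha}\phi(|\xi|^{2}))$, with Remark \ref{M.L.Mittag-Leffer1} supplying the uniform $\mathcal{U}_{p}$-bounds in place of Lemma \ref{M.L.Mittag-Leffer}. The comparison favors your version in two respects: first, the paper's reduction leaves the actual transfer of the $L^{p}$--$L^{r}$ bounds implicit (differentiating a time-convolution does not by itself carry spatial operator bounds across; what makes it work is precisely the multiplier information for $E_{\alpha,\alpha}$ that you invoke explicitly), so your write-up is the self-contained argument the paper is gesturing at — indeed it is the argument the paper itself deploys later in Remark \ref{M.L.Mittag-Leffer1 estimate}. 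Second, your route is strictly more powerful: since $E_{\alpha,\alpha}$ decays one order faster than $E_{\alpha,1}$ (hence $\sigma\in[0,4]$ rather than $[0,2]$ in Remark \ref{M.L.Mittag-Leffer1}), the direct method yields the enlarged admissible ranges of Remark \ref{M.L.Mittag-Leffer1 estimate} (the ``$<2$'' condition), which a reduction capped by the $E_{\alpha,1}$ bounds of Lemma \ref{operator S} cannot reach. The only caveat, which you inherit from the paper rather than introduce, is that the uniformity in $t$ of the $\mathcal{U}_{p}$-norm is applied with $a=t^{\alpha}$, whereas Lemma \ref{M.L.Mittag-Leffer} and Remark \ref{M.L.Mittag-Leffer1} are stated for a fixed $a\geq 0$; your proof stands or falls with the paper's own use of this point in Lemma \ref{operator S}.
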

\begin{proof}
	Combining Remark \ref{some remark of bernstein function}, Lemma \ref{M.L.Mittag-Leffer1}, and the Gagliardo-Nirenberg inequality in Lemma \ref{G.N.inequality}, the proof follows by repeating the argument of Lemma \ref{operator S}.
\end{proof}

\begin{lemma}\label{Sobolev estimates operator S}
Let $a,b\in\mathbb{R}$, $b\geq a$, $1\leq \kappa\leq\infty$, and $1<p\leq r<\infty$ satisfy
$$
\frac{1}{2\delta}\left((b-a)\delta+\frac{d}{p}-\frac{d}{r}\right)<1.
$$
Then the following statements hold:
\begin{enumerate}[\rm(i)]
  \item
  $$
  \big\|\mathcal{S}_{\alpha,\phi}(t)g\big\|_{B^{b,\phi}_{r,\kappa}(\mathbb{R}^{d})}\lesssim t^{-\frac{\alpha}{2\delta}\left((b-a)\delta+\frac{d}{p}-\frac{d}{r}\right)}
  \big\|g\big\|_{B^{a,\phi}_{p,\kappa}(\mathbb{R}^{d})},
  $$
  \item
  If $b>a$, then we also have
  $$
  \big\|\mathcal{S}_{\alpha,\phi}(t)g\big\|_{B^{b,\phi}_{r,1}(\mathbb{R}^{d})}\lesssim t^{-\frac{\alpha}{2\delta}\left((b-a)\delta+\frac{d}{p}-\frac{d}{r}\right)}
  \big\|g\big\|_{B^{a,\phi}_{p,\infty}(\mathbb{R}^{d})}
  $$
\end{enumerate}
\end{lemma}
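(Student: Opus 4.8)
The plan is to reduce everything to the $L^p$–$L^r$ Sobolev bounds of Lemma \ref{operator S} by working frequency block by frequency block. Since $\mathcal{S}_{\alpha,\phi}(t)$, the Littlewood–Paley projections $\Delta_j$, the low-frequency cut-off $\chi\star\,\cdot$ and the $\phi$-Bessel potential $(I+\phi(\Delta))^{\sigma/2}$ are all Fourier multipliers, they commute; in particular $\Delta_j\mathcal{S}_{\alpha,\phi}(t)=\mathcal{S}_{\alpha,\phi}(t)\Delta_j$. The first ingredient I would record is a Bernstein-type equivalence: if $\widehat h$ is supported in the annulus $\{2^{j-1}<|\xi|<2^{j+1}\}$ with $j\geq1$, then for $\sigma\in[0,2]$,
$$
\phi(2^{2j})^{\sigma/2}\|h\|_{r}\sim\|(I+\phi(\Delta))^{\sigma/2}h\|_{r}, \qquad 1<r<\infty,
$$
uniformly in $j$, and the analogue with $\phi(2^{2j})^{\sigma/2}$ replaced by a fixed constant holds for $\widehat h$ supported in $B(0,1)$. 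I would prove this by inserting a fattened cut-off $\tilde\varphi_j$, rescaling via \eqref{M.P.scailing}, and checking that the rescaled symbol $\phi(2^{2j})^{-\sigma/2}(1+\phi(2^{2j}|\xi|^2))^{\sigma/2}\tilde\varphi(\xi)$ and its reciprocal satisfy the hypotheses of Theorem \ref{Multiplier theorem} uniformly in $j$; the required derivative bounds follow from \eqref{B.S.T,func.bdd} together with Assumption H1 \eqref{S.C.condition}, exactly as in the proof of Lemma \ref{M.L.Mittag-Leffer}. I expect this uniform multiplier estimate to be the main technical obstacle.

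Granting this, part (i) is straightforward. Taking $\sigma=b-a\in[0,2)$ (the hypothesis $\tfrac{1}{2\delta}((b-a)\delta+\tfrac dp-\tfrac dr)<1$ forces $b-a<2$ and places $r$ in the admissible range of Lemma \ref{operator S}), the block equivalence applied to the frequency-localized function $\mathcal{S}_{\alpha,\phi}(t)\Delta_j g$ together with Lemma \ref{operator S} gives, for each $j\geq1$,
$$
\phi(2^{2j})^{\frac b2}\|\Delta_j\mathcal{S}_{\alpha,\phi}(t)g\|_{r}
\lesssim\phi(2^{2j})^{\frac a2}\|(I+\phi(\Delta))^{\frac{b-a}{2}}\mathcal{S}_{\alpha,\phi}(t)\Delta_j g\|_{r}
\lesssim t^{-E}\phi(2^{2j})^{\frac a2}\|\Delta_j g\|_{p},
$$
where $E=\tfrac{\alpha}{2\delta}((b-a)\delta+\tfrac dp-\tfrac dr)$. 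Taking the $\ell^{\kappa}$ norm over $j$ reproduces the seminorm part of $\|g\|_{B^{a,\phi}_{p,\kappa}}$. The low-frequency term is handled identically using the $B(0,1)$ version of the equivalence and the boundedness of $\chi\star\,\cdot$ on $L^{p}$, yielding $\|\chi\star\mathcal{S}_{\alpha,\phi}(t)g\|_{r}\lesssim t^{-E}\|\chi\star g\|_{p}$. Summing the two contributions proves (i).

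For part (ii) the difficulty is that the passage from $\ell^{\infty}$ to $\ell^{1}$ in the summation index is not free, and this is precisely where the strict inequality $b>a$ is used. I would keep $\sigma$ as a free parameter and record, for every admissible $\sigma$ and every $j$,
$$
\phi(2^{2j})^{\frac b2}\|\Delta_j\mathcal{S}_{\alpha,\phi}(t)g\|_{r}
\lesssim t^{-E}\,u_j^{\frac{(b-a)-\sigma}{2}}\,\phi(2^{2j})^{\frac a2}\|\Delta_j g\|_{p},
\qquad u_j:=t^{\alpha}\phi(2^{2j}),
$$
so that, after bounding $\phi(2^{2j})^{a/2}\|\Delta_j g\|_{p}\leq\|g\|_{B^{a,\phi}_{p,\infty}}$ and allowing $\sigma=\sigma_j$ to depend on $j$, it remains to show $\sum_{j}u_j^{((b-a)-\sigma_j)/2}\lesssim1$ uniformly in $t$. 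I would split at the index $j_*$ with $u_{j_*}\sim1$: on $j\geq j_*$ choose $\sigma_+\in(b-a,2]$ so the exponent is negative, and on $1\leq j<j_*$ choose $\sigma_-\in[0,b-a)$ so the exponent is positive. Assumption H1 gives $u_j/u_{j_*}\gtrsim 4^{(j-j_*)\delta}$ for $j\geq j_*$ and $u_j/u_{j_*}\lesssim 4^{-(j_*-j)\delta}$ for $j<j_*$, so both pieces are geometric series summing to $O(1)$ independently of $t$; since the constraint is strict, $\sigma_{\pm}$ can be taken close enough to $b-a$ to remain in the admissible range of Lemma \ref{operator S}. Adding the low-frequency term as in (i) then gives $\|\mathcal{S}_{\alpha,\phi}(t)g\|_{B^{b,\phi}_{r,1}}\lesssim t^{-E}\|g\|_{B^{a,\phi}_{p,\infty}}$, completing the proof.
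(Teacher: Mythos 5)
Your proposal is correct, but it takes a genuinely different route from the paper's proof, in both parts. For (i), the paper never frequency-localizes: it rewrites the $B^{b,\phi}_{r,\kappa}$ norm via the Bessel-potential isomorphism of Remark \ref{some remark of bernstein function}(iv), trades integrability for smoothness through the embedding Lemma \ref{embedding theorem} with $c=\tfrac{d}{\delta}\big(\tfrac1p-\tfrac1r\big)$, and then absorbs the total smoothness $b+c-a<2$ into the solution operator by applying the multiplier Lemma \ref{M.L.Mittag-Leffer} to the weighted symbol $\big(t^{\alpha}+t^{\alpha}\phi(|\xi|^{2})\big)^{(b+c-a)/2}E_{\alpha,1}(-it^{\alpha}\phi(|\xi|^{2}))$. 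You instead apply the $L^{p}$--$L^{r}$ bounds of Lemma \ref{operator S} block by block, at the price of the uniform Bernstein-type equivalence $\phi(2^{2j})^{\sigma/2}\|h\|_{r}\sim\|(I+\phi(\Delta))^{\sigma/2}h\|_{r}$ for annulus-supported $h$; that extra lemma is genuine but provable exactly as you indicate (Mikhlin bounds uniform in $j$ via \eqref{B.S.T,func.bdd}, doubling of $\phi$, and Theorem \ref{Multiplier theorem}), and in the paper it is effectively hidden inside the cited isomorphism property. For (ii) the divergence is sharper: the paper applies (i) at smoothness $b\pm\varepsilon$ with $\kappa=\infty$ and then invokes the interpolation identity $\big(B^{b+\varepsilon,\phi}_{r,\infty},B^{b-\varepsilon,\phi}_{r,\infty}\big)_{\theta,1}=B^{b,\phi}_{r,1}$ (called ``complex interpolation'' there, though the functor written is real interpolation), whereas you prove the $\ell^{\infty}\to\ell^{1}$ gain by hand, splitting at the frequency where $t^{\alpha}\phi(2^{2j})\sim 1$ and using two smoothing exponents $\sigma_{\pm}$ straddling $b-a$, with Assumption H1 supplying geometric decay on both sides of the splitting index. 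Your argument is precisely the mechanism underlying that interpolation identity, unwound at the level of Littlewood--Paley blocks: it is more self-contained, since the interpolation identity for general $\phi$-Besov scales is itself nontrivial and the paper gives no reference or proof for it, while the paper's version is shorter given its toolkit; in both arguments the strict hypothesis and the condition $b>a$ enter at the same points and for the same reasons.
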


\begin{proof}
By Lemma \ref{M.L.Mittag-Leffer}, we have
\begin{align*}
\big\|\mathcal{S}_{\alpha,\phi}(t)g\big\|_{B^{b,\phi}_{r,\kappa}(\mathbb{R}^{d})}&=
\big\|(I-\phi(\Delta))^{\frac{b}{2}}\mathcal{S}_{\alpha,\phi}(t)g\big\|_{B^{0,\phi}_{r,\kappa}(\mathbb{R}^{d})}\\
&\lesssim\big\|(I-\phi(\Delta))^{\frac{b}{2}}\mathcal{S}_{\alpha,\phi}(t)g\big\|_{B^{c,\phi}_{p,\kappa}(\mathbb{R}^{d})}\quad \text{where } c=\frac{d}{\delta}\left(\frac{1}{p}-\frac{1}{r}\right)\\
&\lesssim\big\|(I-\phi(\Delta))^{\frac{b+c-a}{2}}\mathcal{S}_{\alpha,\phi}(t)g\big\|_{B^{a,\phi}_{p,\kappa}(\mathbb{R}^{d})}\\
&\lesssim t^{-\frac{\alpha}{2}(b+c-a)}\big\|(t^{\alpha}-t^{\alpha}\phi(\Delta))^{\frac{b+c-a}{2}}
\mathcal{S}_{\alpha,\phi}(t)g\big\|_{B^{a,\phi}_{p,\kappa}(\mathbb{R}^{d})}\\
&\lesssim t^{-\frac{\alpha}{2\delta}\left((b-a)\delta+\frac{d}{p}-\frac{d}{r}\right)}\big\|g\big\|_{B^{a,\phi}_{p,\kappa}(\mathbb{R}^{d})}.
\end{align*}
This completes the proof of (i).

Moreover, if $b>a$, we can choose $\varepsilon>0$ such that $b-\varepsilon\geq a$ and
$$
\frac{1}{2\delta}\left((b-a+\varepsilon)\delta+\frac{d}{p}-\frac{d}{r}\right)<1.
$$
From (i), we then have
\begin{align*}
\big\|\mathcal{S}_{\alpha,\phi}(t)g\big\|_{B^{b+\varepsilon,\phi}_{r,\infty}(\mathbb{R}^{d})}&\lesssim
t^{-\frac{\alpha}{2\delta}\left((b-a+\varepsilon)\delta+\frac{d}{p}-\frac{d}{r}\right)}
\big\|g\big\|_{B^{a,\phi}_{p,\infty}(\mathbb{R}^{d})},\\
\big\|\mathcal{S}_{\alpha,\phi}(t)g\big\|_{B^{b-\varepsilon,\phi}_{r,\infty}(\mathbb{R}^{d})}&\lesssim
t^{-\frac{\alpha}{2\delta}\left((b-a-\varepsilon)\delta+\frac{d}{p}-\frac{d}{r}\right)}
\big\|g\big\|_{B^{a,\phi}_{p,\infty}(\mathbb{R}^{d})}.
\end{align*}
By applying complex interpolation,
$$
\bigg(B^{b+\varepsilon,\phi}_{r,\infty}(\mathbb{R}^{d}),B^{b-\varepsilon,\phi}_{r,\infty}(\mathbb{R}^{d})\bigg)_{\theta,1}=
B^{b,\phi}_{r,1}(\mathbb{R}^{d}),
$$
we obtain
$$
\big\|\mathcal{S}_{\alpha,\phi}(t)g\big\|_{B^{b,\phi}_{r,1}(\mathbb{R}^{d})}\lesssim t^{-\frac{\alpha}{2\delta}\left((b-a)\delta+\frac{d}{p}-\frac{d}{r}\right)}\big\|g\big\|_{B^{a,\phi}_{p,\infty}(\mathbb{R}^{d})}
$$
This completes the proof.
\end{proof}
\begin{lemma}\label{Sobolev estimates operator P}
Let $a,b\in\mathbb{R}$, $b\geq a$, $1\leq \kappa\leq\infty$, and $1<p\leq r<\infty$ satisfy
$$
\frac{1}{2\delta}\left((b-a)\delta+\frac{d}{p}-\frac{d}{r}\right)<2.
$$
Then the following statements hold:
\begin{enumerate}[\rm(i)]
  \item
  $$
  \big\|t^{\alpha-1}\mathcal{P}_{\alpha,\phi}(t)g\big\|_{B^{b,\phi}_{r,\kappa}(\mathbb{R}^{d})}\lesssim t^{\alpha-1-\frac{\alpha}{2\delta}\left((b-a)\delta+\frac{d}{p}-\frac{d}{r}\right)}
  \big\|g\big\|_{B^{a,\phi}_{p,\kappa}(\mathbb{R}^{d})},
  $$
  \item
  If $b>a$, then we also have
  $$
  \big\|t^{\alpha-1}\mathcal{P}_{\alpha,\phi}(t)g\big\|_{B^{b,\phi}_{r,1}(\mathbb{R}^{d})}\lesssim t^{\alpha-1-\frac{\alpha}{2\delta}\left((b-a)\delta+\frac{d}{p}-\frac{d}{r}\right)}
  \big\|g\big\|_{B^{a,\phi}_{p,\infty}(\mathbb{R}^{d})}
  $$
\end{enumerate}
\end{lemma}
\begin{proof}
The proof follows by repeating the argument of Lemma \ref{Sobolev estimates operator S} and by using the Lemma \ref{operator P}.
\end{proof}

\begin{remark}\label{homonuous case}
In Lemma {\rm\ref{Sobolev estimates operator S}}, Lemma {\rm\ref{Sobolev estimates operator P}}, if we replace $B^{b,\phi}_{p,\kappa}(\mathbb{R}^{d})$, $B^{a,\phi}_{p,\kappa}(\mathbb{R}^{d})$, $B^{b,\phi}_{p,1}(\mathbb{R}^{d})$, and $B^{a,\phi}_{p,\infty}(\mathbb{R}^{d})$ with $\dot{B}^{b,\phi}_{p,\kappa}(\mathbb{R}^{d})$, $\dot{B}^{a,\phi}_{p,\kappa}(\mathbb{R}^{d})$, $\dot{B}^{b,\phi}_{p,1}(\mathbb{R}^{d})$, and $\dot{B}^{a,\phi}_{p,\infty}(\mathbb{R}^{d})$, respectively, the results remain valid.
\end{remark}
\section{Main Results}

In this section, we construct the Main Theorem in this paper. First, we give the definition of the triple admissible family $(p, r, q)$.

\begin{definition}
The triple admissible family $(p, r, q)$ is defined as
\begin{align*}
\frac{1}{q} = \frac{1}{2\delta} \left( \frac{d}{p} - \frac{d}{r} \right), \text{ where }
1 < p \leq r <
\begin{cases}
\frac{dp}{d - 2\delta}, & d \geq 2\delta, \\
\infty, & d < 2\delta.
\end{cases}
\end{align*}
\end{definition}

For the nonlinear term $g(w)$, we give the following assumption, which follows from Su \cite{Su}.

\textbf{Assumption H2}
\quad $g \in C(\mathbb{C}, \mathbb{C})$, $g(0) = 0$, and there exists a constant $\kappa > 0$ such that
$$
\big|g(u) - g(w)\big| \lesssim \big(|u|^{\kappa} + |w|^{\kappa}\big) |u - w|,
$$
which implies that for any $1 \leq r \leq \infty$, we have
$$
\big\|g(u) - g(w)\big\|_{L^{\frac{r}{1+\kappa}}(\mathbb{R}^{d})} \lesssim \big(\big\|u\big\|^{\kappa}_{L^{r}(\mathbb{R}^{d})} + \big\|w\big\|_{L^{r}(\mathbb{R}^{d})}^{\kappa}\big) \big\|u - w\big\|_{L^{r}(\mathbb{R}^{d})}.
$$

Furthermore, let $(p, r, q)$ be the triple admissible family. We consider the following Banach space $\mathcal{X}^{\alpha}_{T}$ (we also denote it as $\mathcal{X}^{\alpha}$ when $T = \infty$):
$$
\mathcal{X}^{\alpha}_{T} = \big\{w : w \in C\big([0, T]; \dot{B}^{\gamma_{0}, \phi}_{p_{0}, \infty}\big) \cap C_{\alpha, q}\big((0, T]; L^{r}(\mathbb{R}^{d})\big)\big\}, \text{ where } \gamma_{0} = \frac{1}{\delta}\left(\frac{d}{p_{0}} - \frac{d}{p}\right), 1 < p \leq p_{0} \leq r < \infty,
$$
with the norm
$$
\big\|w\big\|_{\mathcal{X}^{\alpha}_{T}} = \sup_{t \in [0, T]}\big\|w(t)\big\|_{\dot{B}^{\gamma_{0}, \phi}_{p_{0}, \infty}} + \sup_{t \in [0, T]} t^{\frac{\alpha}{q}} \big\|w(t)\big\|_{L^{r}(\mathbb{R}^{d})}.
$$
\begin{theorem}\label{Local exist of mild solution}
If the \textbf{Assumption H2} is satisfied and let $(p, r, q)$ be an admissible triple family, satisfying
\begin{align*}
\begin{cases}
(1+\kappa) \vee p < r < p\big(1+\kappa\big) \\
1 \vee \frac{d\kappa}{2\delta} < p < p_{0} \leq r \\
\frac{d}{p} - \frac{2\delta}{1+\kappa} < \frac{d}{r} < \frac{2\delta}{\kappa}
\end{cases}
\end{align*}
Then for initial value $w_{0} \in \dot{B}^{\gamma_{0}, \phi}_{p_{0}, \infty}$, the E.q. \eqref{E.Q.T.F.S.E} possesses a local mild solution on $\mathcal{X}_{T^{*}}^{\alpha}$ for some $T^{*}>0$. Moreover, for any \( T' \in (0, T^{\ast}) \), there exists a neighborhood \( V \) of \( w_{0} \) in the space \( \dot{B}^{\gamma_{0}, \phi}_{p_{0}, \infty} \) such that the mapping \( \tilde{w}_{0} \mapsto \tilde{w} \) from \( V \) to \( \mathcal{X}_{T'}^{\alpha} \) is Lipschitz continuous.
\end{theorem}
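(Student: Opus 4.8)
The plan is to recast the mild-solution identity \eqref{mild solution} of Lemma \ref{find Mild solution} as a fixed-point problem for
$$\Phi(w)(t) = \mathcal{S}_{\alpha,\phi}(t)w_{0} + \frac{1}{i}\int_{0}^{t}(t-\tau)^{\alpha-1}\mathcal{P}_{\alpha,\phi}(t-\tau)g(w(\tau))\,d\tau,$$
and to run the Banach contraction principle on a closed ball $\mathcal{B}_{R}=\{w\in\mathcal{X}^{\alpha}_{T}:\|w\|_{\mathcal{X}^{\alpha}_{T}}\leq R\}$, with $R$ and $T^{*}$ fixed at the end. Both pieces of $\|\cdot\|_{\mathcal{X}^{\alpha}_{T}}$---the unweighted $\dot{B}^{\gamma_{0},\phi}_{p_{0},\infty}$ part and the $t^{\alpha/q}$-weighted $L^{r}$ part---must be controlled for the linear term and the Duhamel term separately.

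For the linear term I would invoke the homogeneous Sobolev estimates of Lemma \ref{Sobolev estimates operator S} (Remark \ref{homonuous case}). Taking $a=b=\gamma_{0}$ and $p=r=p_{0}$ gives a zero decay exponent, so $\sup_{t}\|\mathcal{S}_{\alpha,\phi}(t)w_{0}\|_{\dot{B}^{\gamma_{0},\phi}_{p_{0},\infty}}\lesssim\|w_{0}\|_{\dot{B}^{\gamma_{0},\phi}_{p_{0},\infty}}$; taking instead $a=\gamma_{0}$, $b=0$, source integrability $p_{0}$, and target $\dot{B}^{0,\phi}_{r,1}\hookrightarrow L^{r}$ (using part (ii), legitimate since $p<p_{0}$ forces $\gamma_{0}<0=b$), the defining relation $\gamma_{0}=\tfrac1\delta(d/p_{0}-d/p)$ collapses the decay exponent to $-\tfrac{\alpha}{2\delta}(d/p-d/r)=-\alpha/q$, whence $\sup_{t}t^{\alpha/q}\|\mathcal{S}_{\alpha,\phi}(t)w_{0}\|_{L^{r}}\lesssim\|w_{0}\|_{\dot{B}^{\gamma_{0},\phi}_{p_{0},\infty}}$; the hypothesis $d/p-d/r<2\delta/(1+\kappa)$ supplies the constraint $\tfrac1{2\delta}(d/p-d/r)<1$ required by Lemma \ref{Sobolev estimates operator S}. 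Together these give $\|\mathcal{S}_{\alpha,\phi}(t)w_{0}\|_{\mathcal{X}^{\alpha}_{T}}\lesssim\|w_{0}\|_{\dot{B}^{\gamma_{0},\phi}_{p_{0},\infty}}$.

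For the Duhamel term I would set $\tilde{p}=r/(1+\kappa)$, so that Assumption H2 yields $\|g(w(\tau))\|_{L^{\tilde p}}\lesssim\|w(\tau)\|^{1+\kappa}_{L^{r}}\lesssim\tau^{-\alpha(1+\kappa)/q}\|w\|^{1+\kappa}_{\mathcal{X}^{\alpha}_{T}}$, and feed $L^{\tilde p}\hookrightarrow\dot{B}^{0,\phi}_{\tilde p,\infty}$ into the homogeneous Sobolev estimates for $t^{\alpha-1}\mathcal{P}_{\alpha,\phi}$ (Lemma \ref{Sobolev estimates operator P}, Remark \ref{M.L.Mittag-Leffer1 estimate}). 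For the $L^{r}$ component the kernel is $(t-\tau)^{\alpha-1-\frac{\alpha d\kappa}{2\delta r}}$, integrable at $\tau=t$ exactly when $d/r<2\delta/\kappa$; for the $\dot{B}^{\gamma_{0},\phi}_{p_{0},\infty}$ component the kernel is $(t-\tau)^{\alpha-1-\frac{\alpha}{2\delta}(d(1+\kappa)/r-d/p)}$, whose exponent exceeds $-1$ because $d(1+\kappa)/r-d/p<2\delta$. In both cases $\alpha(1+\kappa)/q<1$ (equivalently $d/p-d/r<2\delta/(1+\kappa)$) tames the singularity at $\tau=0$, and the Beta integral leaves a power $t^{\alpha(1-\kappa d/(2\delta p))}$, nonnegative precisely because $p>d\kappa/(2\delta)$; after the $t^{\alpha/q}$ weight the same power reappears for the $L^{r}$ part. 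Hence this contribution is $\lesssim T^{\vartheta}R^{1+\kappa}$ with $\vartheta=\alpha(1-\kappa d/(2\delta p))>0$. One subtlety is that the raw $L^{p}$-$L^{r}$ estimate of Lemma \ref{operator P} carries the integrability ceiling $\tfrac{dp}{d-2\delta}$, which the jump $\tilde p\to r$ may violate; the smoothness-graded Sobolev estimates avoid this, since their admissible range is governed by the effective Bessel orders $\tfrac{d\kappa}{\delta r}$ (for the $L^{r}$ target) and $\tfrac1\delta(d(1+\kappa)/r-d/p)$ (for the $\dot{B}^{\gamma_{0},\phi}_{p_{0},\infty}$ target), both lying in $[0,2]$ under the hypotheses---nonnegative since $r<p(1+\kappa)$, bounded since $d/r<2\delta/\kappa$---so the estimate applies in the form established in the proof of Lemma \ref{Sobolev estimates operator P}.

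Finally, replacing the growth bound by the Lipschitz bound of Assumption H2 in the same two chains yields $\|\Phi(u)-\Phi(w)\|_{\mathcal{X}^{\alpha}_{T}}\lesssim T^{\vartheta}R^{\kappa}\|u-w\|_{\mathcal{X}^{\alpha}_{T}}$; choosing $R\sim\|w_{0}\|_{\dot{B}^{\gamma_{0},\phi}_{p_{0},\infty}}$ and then $T^{*}$ with $CT^{*\vartheta}R^{\kappa}\leq\tfrac12$ makes $\Phi$ a contractive self-map of $\mathcal{B}_{R}$, producing the unique mild solution; membership in $C([0,T^{*}];\dot{B}^{\gamma_{0},\phi}_{p_{0},\infty})\cap C_{\alpha,q}((0,T^{*}];L^{r})$ then follows from strong continuity of $\mathcal{S}_{\alpha,\phi}(t)$ and dominated convergence applied to the (now integrable) Duhamel integral. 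For the Lipschitz dependence I would subtract the fixed-point identities for $w_{0}$ and $\tilde{w}_{0}$, bound $\mathcal{S}_{\alpha,\phi}(t)(w_{0}-\tilde{w}_{0})$ by the linear estimate and the Duhamel difference by the contraction estimate, and absorb the resulting $\tfrac12\|w-\tilde{w}\|_{\mathcal{X}^{\alpha}_{T'}}$ on a small interval $[0,T']$, giving $\|w-\tilde{w}\|_{\mathcal{X}^{\alpha}_{T'}}\lesssim\|w_{0}-\tilde{w}_{0}\|_{\dot{B}^{\gamma_{0},\phi}_{p_{0},\infty}}$. I expect the main obstacle to be exactly the exponent-and-space bookkeeping indicated above: verifying that the three admissibility inequalities simultaneously force convergence of every Beta integral at both endpoints and a strictly positive power of $T$, while routing the nonlinearity through the smoothness-graded rather than the raw $L^{p}$-$L^{r}$ estimates so as to beat the integrability ceiling.
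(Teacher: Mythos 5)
Your proposal is correct and takes essentially the same route as the paper: the identical fixed-point operator on the identical ball in $\mathcal{X}^{\alpha}_{T}$, the linear term controlled via Lemma \ref{Sobolev estimates operator S} and Remark \ref{homonuous case} with the same parameter choices, the Duhamel term handled by placing $g(w)$ in $L^{r/(1+\kappa)}$ and applying the Besov-graded estimates for $t^{\alpha-1}\mathcal{P}_{\alpha,\phi}$ (your ``effective order'' bookkeeping is exactly the paper's route $L^{p}\hookrightarrow \dot{B}^{0,\phi}_{p,\infty}\hookrightarrow \dot{B}^{\gamma_{0},\phi}_{p_{0},\infty}$ via Lemma \ref{embedding theorem}), yielding the same Beta integrals, the same power $T^{\alpha(1-\kappa d/(2\delta p))}$, and the same choice of $R$ and $T^{*}$. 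The only difference is depth of detail: the paper writes out the time-continuity verification term by term (using the identity $\tfrac{d}{dt}\mathcal{S}_{\alpha,\phi}(t)w_{0}=-it^{\alpha-1}\phi(\Delta)\mathcal{P}_{\alpha,\phi}(t)w_{0}$ and Remark \ref{M.L.Mittag-Leffer1 estimate}), which you compress into ``strong continuity plus dominated convergence,'' but the mechanism is the same.
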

\begin{proof}
By Definition \ref{D.f.mild solution}, we define the mapping
\begin{align}\label{operator formula}
\Theta w(t, x) = \mathcal{S}_{\alpha, \phi}(t)w_{0}(x) + \frac{1}{i}
\int_{0}^{t}(t - \tau)^{\alpha - 1}\mathcal{P}_{\alpha, \phi}(t - \tau)g(w(\tau, x)) \, d\tau, \quad t \in [0, T].
\end{align}
We aim to prove that there exists $T^{*} > 0$ such that the mapping $\Theta$ has a fixed point in $\mathcal{X}_{T^{*}}^{\alpha}$.

First, we verify that for any $T > 0$, the operator $\Theta w \in C\big([0, T]; \dot{B}^{\gamma_{0}, \phi}_{p_{0}, \infty}\big) \cap C_{\alpha, q}\big((0, T]; L^{r}(\mathbb{R}^{d})\big)$. For any $0 \leq t_{1} < t_{2} \leq T$, we obtain
\begin{align*}
\Theta w(t_{2}) - \Theta w(t_{1}) &= \mathcal{S}_{\alpha, \phi}(t_{2})w_{0} - \mathcal{S}_{\alpha, \phi}(t_{1})w_{0} \\
&\quad + \frac{1}{i}\int_{t_{1}}^{t_{2}}(t_{2} - \tau)^{\alpha - 1}\mathcal{P}_{\alpha, \phi}(t_{2} - \tau)g(w(\tau)) \, d\tau \\
&\quad + \frac{1}{i}\int_{0}^{t_{1}}(t_{2} - \tau)^{\alpha - 1}\mathcal{P}_{\alpha, \phi}(t_{2} - \tau)g(w(\tau)) \\
&\quad - (t_{1} - \tau)^{\alpha - 1}\mathcal{P}_{\alpha, \phi}(t_{1} - \tau)g(w(\tau)) \, d\tau.
\end{align*}
By Lemma \ref{Sobolev estimates operator S} and Remark \ref{homonuous case}, for $w_{0} \in \dot{B}^{\gamma_{0}, \phi}_{p_{0}, \infty}$, it is clear that $\mathcal{S}_{\alpha, \phi}(t)w_{0} \in \dot{B}^{\gamma_{0}, \phi}_{p_{0}, \infty}$. Hence, we have
$$
\big\|\mathcal{S}_{\alpha, \phi}(t_{2})w_{0} - \mathcal{S}_{\alpha, \phi}(t_{1})w_{0}\big\|_{\dot{B}^{\gamma_{0}, \phi}_{p_{0}, \infty}} \rightarrow 0, \quad \text{as } t_{2} \rightarrow t_{1}.
$$
by using the Lebesgue dominated convergence theorem.

By Lemma \ref{embedding theorem}, we have
$$
L^{p}(\mathbb{R}^{d}) \hookrightarrow \dot{B}^{0, \phi}_{p, \infty}(\mathbb{R}^{d}) \hookrightarrow \dot{B}^{\gamma_{0}, \phi}_{p_{0}, \infty}(\mathbb{R}^{d}),
$$
and by Lemma \ref{Sobolev estimates operator P} and Remark \ref{homonuous case}, we have
\begin{align*}
&\bigg\|\int_{t_{1}}^{t_{2}}(t_{2} - \tau)^{\alpha - 1}\mathcal{P}_{\alpha, \phi}(t_{2} - \tau)g(w(\tau)) \, d\tau\bigg\|_{\dot{B}^{\gamma_{0}, \phi}_{p_{0}, \infty}} \\
&\lesssim \int_{t_{1}}^{t_{2}}(t_{2} - \tau)^{\alpha - 1 - \frac{\alpha d}{2\delta}\left(\frac{1 + \kappa}{r} - \frac{1}{p}\right)}\big\|w(\tau)\big\|_{L^{r}}^{\kappa + 1} \, d\tau \\
&\lesssim \int_{t_{1}}^{t_{2}}(t_{2} - \tau)^{\alpha - 1 - \frac{\alpha \kappa d}{2\delta p}}\tau^{-\frac{\alpha(\kappa + 1)}{q}}\left(\tau^{\frac{\alpha}{q}}\big\|w(\tau)\big\|_{L^{r}}\right)^{\kappa + 1} \, d\tau \\
&\lesssim (t_{2} - t_{1})^{\alpha - \frac{\alpha \kappa d}{2\delta p}}\big\|w\big\|_{\mathcal{X}^{\alpha}_{T}} \rightarrow 0, \quad \text{as } t_{2} \rightarrow t_{1}.
\end{align*}
Hence, by the Lebesgue dominated convergence theorem,
$$
\bigg\|\int_{t_{1}}^{t_{2}}(t_{2} - \tau)^{\alpha - 1}\mathcal{P}_{\alpha, \phi}(t_{2} - \tau)g(w(\tau)) \, d\tau\bigg\|_{\dot{B}^{\gamma_{0}, \phi}_{p_{0}, \infty}} \rightarrow 0, \quad \text{as } t_{2} \rightarrow t_{1}.
$$
Similarly, we note that
\begin{align*}
&\bigg\|\int_{0}^{t_{1}}(t_{2} - \tau)^{\alpha - 1}\mathcal{P}_{\alpha, \phi}(t_{2} - \tau)g(w(\tau))- (t_{1} - \tau)^{\alpha - 1}\mathcal{P}_{\alpha, \phi}(t_{1} - \tau)g(w(\tau)) \, d\tau\bigg\|_{\dot{B}^{\gamma_{0}, \phi}_{p_{0}, \infty}} \\
&\lesssim \int_{0}^{t_{1}}(t_{1} - \tau)^{\alpha - 1 - \frac{\alpha d}{2\delta}\left(\frac{1 + \kappa}{r} - \frac{1}{p}\right)}\tau^{-\frac{\alpha(\kappa + 1)}{q}}\left(\tau^{\frac{\alpha}{q}}\big\|w(\tau)\big\|_{L^{r}}\right)^{\kappa + 1} \, d\tau \\
&\lesssim B\left(\alpha - \frac{\alpha d}{2\delta}\left(\frac{1 + \kappa}{r} - \frac{1}{p}\right), 1 - \frac{\alpha(\kappa + 1)}{q}\right)\big\|w\big\|_{\mathcal{X}^{\alpha}_{T}},
\end{align*}
and we also get
$$
\bigg\|\int_{0}^{t_{1}}(t_{2} - \tau)^{\alpha - 1}\mathcal{P}_{\alpha, \phi}(t_{2} - \tau)g(w(\tau)) \\
- (t_{1} - \tau)^{\alpha - 1}\mathcal{P}_{\alpha, \phi}(t_{1} - \tau)g(w(\tau)) \, d\tau\bigg\|_{\dot{B}^{\gamma_{0}, \phi}_{p_{0}, \infty}} \rightarrow 0, \quad \text{as } t_{2} \rightarrow t_{1}.
$$
by the Lebesgue dominated convergence theorem.
Therefore, we obtain that $\Theta w \in C\big([0, T]; \dot{B}^{\gamma_{0}, \phi}_{p_{0}, \infty}\big)$.

Next, we verify that $\Theta w \in C_{\alpha, q}\big((0, T]; L^{r}(\mathbb{R}^{d})\big)$. For any $0 < t_{1} \leq t_{2} \leq T$, we have that
\begin{align*}
&\big\|t_{2}^{\frac{\alpha}{q}}\mathcal{S}_{\alpha,\phi}(t_{2})w_{0}
-t_{1}^{\frac{\alpha}{q}}\mathcal{S}_{\alpha,\phi}(t_{1})w_{0}\big\|_{L^{r}(\mathbb{R}^{d})}\\
&\lesssim (t_{2}^{\frac{\alpha}{q}}-t_{1}^{\frac{\alpha}{q}})\big\|\mathcal{S}_{\alpha,\phi}(t_{2})w_{0}\big\|_{L^{r}(\mathbb{R}^{d})}+
t_{1}^{\frac{\alpha}{q}}
\big\|\mathcal{S}_{\alpha,\phi}(t_{2})w_{0}-\mathcal{S}_{\alpha,\phi}(t_{1})w_{0}\big\|_{L^{r}(\mathbb{R}^{d})}.
\end{align*}
By Lemma \ref{Sobolev estimates operator S}, and noting that $\dot{B}^{0,\phi}_{r,1}(\mathbb{R}^{d})\hookrightarrow L^{r}(\mathbb{R}^{d})$, we obtain that
\begin{align*}
(t_{2}^{\frac{\alpha}{q}}-t_{1}^{\frac{\alpha}{q}})\big\|\mathcal{S}_{\alpha,\phi}(t_{2})w_{0}\big\|_{L^{r}(\mathbb{R}^{d})}
&\lesssim(t_{2}^{\frac{\alpha}{q}}-t_{1}^{\frac{\alpha}{q}})
\big\|\mathcal{S}_{\alpha,\phi}(t_{2})w_{0}\big\|_{\dot{B}^{0,\phi}_{r,1}(\mathbb{R}^{d})}\\
&\lesssim \big(1-(\frac{t_{1}}{t_{2}})^{\frac{\alpha}{q}}\big)
\big\|w_{0}\big\|_{\dot{B}^{\gamma_{0},\phi}_{p_{0},\infty}(\mathbb{R}^{d})}\\
&\rightarrow 0,\text{ as }t_{2}\rightarrow t_{1}.
\end{align*}
On the other hand, note that
\begin{align*}
\mathcal{S}_{\alpha,\phi}(t_{2})w_{0}-\mathcal{S}_{\alpha,\phi}(t_{1})w_{0}&=
\int_{t_{1}}^{t_{2}}\frac{d}{dt}\mathcal{S}_{\alpha,\phi}(t)w_{0}dt\\
&=-i\int_{t_{1}}^{t_{2}}t^{\alpha-1}\phi(\Delta)\mathcal{P}_{\alpha,\phi}(t)w_{0}dt,
\end{align*}
By Remark \ref{homonuous case}, Lemma \ref{Sobolev estimates operator P} and noting that
$$
\frac{1}{2\delta}\big((2-\gamma_{0})\delta+\frac{d}{p}-\frac{d}{r}\big)
=1+\frac{d}{2\delta}\big(\frac{1}{p_{0}}-\frac{1}{r}\big)<2,
$$
we get that
\begin{align*}
&t_{1}^{\frac{\alpha}{q}}
\big\|\mathcal{S}_{\alpha,\phi}(t_{2})w_{0}-\mathcal{S}_{\alpha,\phi}(t_{1})w_{0}\big\|_{L^{r}(\mathbb{R}^{d})}\\
&\lesssim t_{1}^{\frac{\alpha}{q}}\int_{t_{1}}^{t_{2}}t^{\alpha-1}\big\|\phi(\Delta)
\mathcal{P}_{\alpha,\phi}(t)w_{0}\big\|_{\dot{B}^{0,\phi}_{r,1}(\mathbb{R}^{d})}\,dt\\
&\lesssim t_{1}^{\frac{\alpha}{q}}\int_{t_{1}}^{t_{2}}\big\|t^{\alpha-1}
\mathcal{P}_{\alpha,\phi}(t)w_{0}\big\|_{\dot{B}^{2,\phi}_{r,1}(\mathbb{R}^{d})}\,dt\\
&\lesssim t_{1}^{\frac{\alpha}{q}}\int_{t_{1}}^{t_{2}}t^{\alpha-1-\alpha-\frac{\alpha}{2\delta}(\frac{d}{p_{0}}-\frac{d}{r})}
\big\|w_{0}\big\|_{\dot{B}^{\gamma_{0},\phi}_{p_{0},\infty}(\mathbb{R}^{d})}dt\\
&\lesssim \big(t_{1}^{\frac{\alpha}{2\delta}(\frac{d}{p}-\frac{d}{p_{0}})}
-\big(\frac{t_{1}}{t_{2}}\big)^{\frac{\alpha}{q}}t_{2}^{\frac{\alpha}{2\delta}(\frac{d}{p}-\frac{d}{p_{0}})}\big)
\big\|w_{0}\big\|_{\dot{B}^{\gamma_{0},\phi}_{p_{0},\infty}(\mathbb{R}^{d})}\rightarrow
0, \text{ as }t_{2}\rightarrow t_{1},
\end{align*}
Moreover, noting that
\begin{align*}
&t^{\frac{\alpha}{q}}_{2}\bigg\|\int_{t_{1}}^{t_{2}}(t_{2}-\tau)^{\alpha-1}
\mathcal{P}_{\alpha,\phi}(t_{2}-\tau)g(w(\tau))\,d\tau\bigg\|
_{L^{r}(\mathbb{R}^{d})}\\
&\lesssim t_{2}^{\frac{\alpha}{q}}\int_{t_{1}}^{t_{2}}(t_{2}-\tau)^{\alpha-1-\frac{\alpha d}{2\delta}(\frac{\kappa+1}{r}-\frac{1}{r})}\big\|w(\tau)\big\|_{L^{r}}^{\kappa+1}\,d\tau\\
&\lesssim t_{2}^{\frac{\alpha}{q}}\int_{t_{1}}^{t_{2}}(t_{2}-\tau)^{\alpha-1-\frac{\alpha d}{2\delta}(\frac{\kappa+1}{r}-\frac{1}{r})}
\tau^{-\frac{\alpha (\kappa+1)}{q}}\big(\tau^{\frac{\alpha}{q}}\big\|w(\tau)\big\|_{L^{r}}\big)^{\kappa+1}\,d\tau\\
&\lesssim t_{2}^{\alpha-\frac{\alpha\kappa d}{2\delta p}}
\int_{\frac{t_{1}}{t_{2}}}^{1}\big(1-\tau\big)^{\alpha-1-\frac{\alpha \kappa d}{2\delta r}}
\tau^{-\frac{\alpha (\kappa+1)}{q}}\,d\tau\big\|w\big\|_{\mathcal{X}^{\alpha}_{T}}\rightarrow 0,\text{ as }
t_{2}\rightarrow t_{1},
\end{align*}
and similarly, we have that
\begin{align*}
&\big(t_{2}^{\frac{\alpha}{q}}-t_{1}^{\frac{\alpha}{q}}\big)
\bigg\|\int_{0}^{t_{1}}(t_{2}-\tau)^{\alpha-1}\mathcal{P}_{\alpha,\phi}(t_{2}-\tau)g(w(\tau))
-(t_{1}-\tau)^{\alpha-1}\mathcal{P}_{\alpha,\phi}(t_{1}-\tau)g(w(\tau))\,d\tau\bigg\|_{L^{r}(\mathbb{R}^{d})}\\
&\lesssim\big(t_{2}^{\frac{\alpha}{q}}-t_{1}^{\frac{\alpha}{q}}\big)\int_{0}^{t_{1}}(t_{1}-\tau)^{\alpha-1-\frac{\alpha d}{2\delta}(\frac{\kappa+1}{r}-\frac{1}{r})}\big\|w(\tau)\big\|_{L^{r}}^{\kappa+1}\,d\tau\\
&\lesssim\big(t_{2}^{\frac{\alpha}{q}}-t_{1}^{\frac{\alpha}{q}}\big)t_{1}^{\alpha-\frac{\alpha\kappa d}{2\delta p}-\frac{\alpha}{q}}B\big(\alpha-\frac{\alpha\kappa d}{2\delta r},1-\frac{\alpha(\kappa+1)}{q}\big)\big\|w\big\|_{\mathcal{X}^{\alpha}_{T}}\\
&\lesssim \big(\big(\frac{t_{2}}{t_{1}}\big)^{\frac{\alpha}{q}}-1\big)t_{1}^{\alpha-\frac{\alpha\kappa d}{2\delta p}}
\big\|w\big\|_{\mathcal{X}^{\alpha}_{T}}\rightarrow 0,\text{ as }
t_{2}\rightarrow t_{1}.
\end{align*}
Now we have verified that $\Theta w\in C\big([0,T];\dot{B}^{\gamma_{0},\phi}_{p_{0},\infty}\big)\cap C_{\alpha,q}\big((0,T];L^{r}(\mathbb{R}^{d})\big)$.

From the above process, we also easily get
\begin{align*}
\big\|\Theta w\big\|_{\mathcal{X}^{\alpha}_{T}} &\lesssim \sup_{t\in [0,T]}\big\|\Theta w(t)\big\|_{\dot{B}^{\gamma_{0},\phi}_{p_{0},\infty}} + \sup_{t\in [0,T]} t^{\frac{\alpha}{q}} \big\|\Theta w(t)\big\|_{L^{r}(\mathbb{R}^{d})},
\end{align*}
and
\begin{align*}
\big\|\Theta w(t)\big\|_{\dot{B}^{\gamma_{0},\phi}_{p_{0},\infty}} &\lesssim \big\|\mathcal{S}_{\alpha,\phi}(t)w_{0}\big\|_{\dot{B}^{\gamma_{0},\phi}_{p_{0},\infty}} + \bigg\|\frac{1}{i}\int_{0}^{t}(t-\tau)^{\alpha-1}\mathcal{P}_{\alpha,\phi}(t_{2}-\tau)g(w(\tau))\,d\tau
\bigg\|_{\dot{B}^{\gamma_{0},\phi}_{p_{0},\infty}} \\
&\lesssim \big\|w_{0}\big\|_{\dot{B}^{\gamma_{0},\phi}_{p_{0},\infty}} + \int_{0}^{t}(t-\tau)^{\alpha-1-\frac{\alpha d}{2\delta}\left(\frac{1+\kappa}{r}-\frac{1}{p}\right)} \tau^{-\frac{\alpha(\kappa+1)}{q}} \left(\tau^{\frac{\alpha}{q}} \big\|w(\tau)\big\|_{L^{r}}\right)^{\kappa+1}\,d\tau \\
&\lesssim \big\|w_{0}\big\|_{\dot{B}^{\gamma_{0},\phi}_{p_{0},\infty}} + T^{\alpha-\frac{\alpha\kappa d}{2\delta p}} \big\|w\big\|^{\kappa+1}_{\mathcal{X}^{\alpha}_{T}},
\end{align*}
\begin{align*}
t^{\frac{\alpha}{q}} \big\|\Theta w(t)\big\|_{L^{r}} &\lesssim t^{\frac{\alpha}{q}} \big\|\mathcal{S}_{\alpha,\phi}(t)w_{0}\big\|_{L^{r}} + t^{\frac{\alpha}{q}} \int_{0}^{t}(t-\tau)^{\alpha-1} \bigg\|\mathcal{P}_{\alpha,\phi}(t_{2}-\tau)g(w(\tau))\bigg\|_{L^{r}}\,d\tau \\
&\lesssim t^{\frac{\alpha}{q}} \big\|\mathcal{S}_{\alpha,\phi}(t)w_{0}\big\|_{\dot{B}^{0,\phi}_{r,1}} + t^{\frac{\alpha}{q}} \int_{0}^{t}(t-\tau)^{\alpha-1-\frac{\alpha d}{2\delta}\left(\frac{\kappa+1}{r}-\frac{1}{r}\right)} \tau^{-\frac{\alpha (\kappa+1)}{q}} \left(\tau^{\frac{\alpha}{q}} \big\|w(\tau)\big\|_{L^{r}}\right)^{\kappa+1}\,d\tau \\
&\lesssim \big\|w_{0}\big\|_{\dot{B}^{\gamma_{0},\phi}_{p_{0},\infty}} + T^{\alpha-\frac{\alpha\kappa d}{2\delta p}} \big\|w\big\|^{\kappa+1}_{\mathcal{X}^{\alpha}_{T}}.
\end{align*}
Since $w_{0} \in \dot{B}^{\gamma_{0},\phi}_{p_{0},\infty}$, there exists a constant $R > 0$ such that $\big\|w_{0}\big\|_{\dot{B}^{\gamma_{0},\phi}_{p_{0},\infty}} \leq \frac{R}{2C}$. We can choose $T^{*} > 0$ such that
$$
C(T^{*})^{\alpha-\frac{\alpha\kappa d}{2\delta p}} R^{\kappa} < \frac{1}{2}.
$$
Consider the closed ball $\mathcal{B}_{R}$ defined as
$$
\mathcal{B}_{R} = \big\{w \in \mathcal{X}_{T^{*}}^{\alpha} : \big\|w\big\|_{\mathcal{X}_{T^{*}}^{\alpha}} \leq R\big\}.
$$
For any $w \in \mathcal{B}_{R}$, we have
\begin{align*}
\big\|\Theta w\big\|_{\mathcal{X}^{\alpha}_{T^{*}}} &\lesssim \big\|w_{0}\big\|_{\dot{B}^{\gamma_{0},\phi}_{p_{0},\infty}} + (T^{*})^{\alpha-\frac{\alpha\kappa d}{2\delta p}} \big\|w\big\|^{\kappa+1}_{\mathcal{X}^{\alpha}_{T}} \\
&\leq C \frac{R}{2C} + \frac{R}{2} \leq R,
\end{align*}
which shows that the operator $\Theta$ maps $\mathcal{B}_{R}$ into itself.

Moreover, for any $u, w \in \mathcal{B}_{R}$, and noting \textbf{Assumption H2}, we have
\begin{align*}
\big\|\Theta u - \Theta w\big\|_{\mathcal{X}^{\alpha}_{T^{*}}} &\lesssim \sup_{t\in [0,T^{*}]}\bigg\|\int_{0}^{t}(t-\tau)^{\alpha-1}\mathcal{P}_{\alpha,\phi}(t_{2}-\tau)(g(u(\tau)) - g(w(\tau)))\,d\tau\bigg\|_{\dot{B}^{\gamma_{0},\phi}_{p_{0},\infty}} \\
&\quad + \sup_{t\in [0,T^{*}]}\bigg\|\int_{0}^{t}(t-\tau)^{\alpha-1}\mathcal{P}_{\alpha,\phi}(t_{2}-\tau)(g(u(\tau)) - g(w(\tau)))\,d\tau\bigg\|_{L^{r}(\mathbb{R}^{d})} \\
&\lesssim (T^{*})^{\alpha-\frac{\alpha\kappa d}{2\delta p}} \left(\big\|u\big\|^{\kappa}_{\mathcal{X}^{\alpha}_{T^{*}}} + \big\|w\big\|^{\kappa}_{\mathcal{X}^{\alpha}_{T^{*}}}\right) \big\|u - w\big\|_{\mathcal{X}^{\alpha}_{T^{*}}} \\
&< \big\|u - w\big\|_{\mathcal{X}^{\alpha}_{T^{*}}}.
\end{align*}
Therefore, by the fixed point theorem, the operator $\Theta$ has a unique fixed point $w$, which is the unique mild solution of the E.q. \eqref{E.Q.T.F.S.E}.
\end{proof}
Next, we establish the global well-posedness of mild solutions with respect to E.q. \eqref{E.Q.T.F.S.E}.
\begin{theorem}\label{global well-posedbess mild solution}
If the \textbf{Assumption H2} is satisfied and let \((p, r, q)\) be an admissible triplet satisfying:
\begin{align*}
\begin{cases}
(1+\kappa) \vee p < r < p(1+\kappa), \\
1 < \frac{d\kappa}{2\delta} = p < p_{0} \leq r, \\
\frac{d}{p} - \frac{2\delta}{1+\kappa} < \frac{d}{r} < \frac{2\delta}{\kappa}.
\end{cases}
\end{align*}
Then, for \(w_{0} \in \dot{B}^{\gamma_{0},\phi}_{p_{0},\infty}\) with sufficiently small norm, E.q. \eqref{E.Q.T.F.S.E} admits a unique global mild solution \(w\) in \(\mathcal{X}^{\alpha}\). Moreover, there exists a neighborhood \(V\) of \(w_{0}\) in \(\dot{B}^{\gamma_{0}, \phi}_{p_{0}, \infty}\) such that the mapping \(\tilde{w}_{0} \mapsto \tilde{w}\) from \(V\) to \(\mathcal{X}^{\alpha}\) is Lipschitz continuous.
\end{theorem}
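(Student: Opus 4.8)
The plan is to run the same contraction-mapping scheme on the operator $\Theta$ defined in \eqref{operator formula}, but now on the full space $\mathcal{X}^{\alpha} = \mathcal{X}^{\alpha}_{\infty}$, with the smallness extracted from the initial datum rather than from a short time interval. The decisive structural feature is that under the critical relation $p = \frac{d\kappa}{2\delta}$ one has $\alpha - \frac{\alpha\kappa d}{2\delta p} = 0$, so the time prefactor $T^{\alpha - \frac{\alpha\kappa d}{2\delta p}}$ that furnished the smallness in Theorem \ref{Local exist of mild solution} degenerates to $1$; the nonlinear estimate becomes scale-invariant and therefore holds uniformly on $[0,\infty)$. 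Consequently, localizing in time is of no help, and one must instead close the argument by requiring $\|w_{0}\|_{\dot{B}^{\gamma_{0},\phi}_{p_{0},\infty}}$ to be small.

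First I would reproduce the a priori bounds from the proof of Theorem \ref{Local exist of mild solution} essentially verbatim, invoking Lemma \ref{Sobolev estimates operator S}, Lemma \ref{Sobolev estimates operator P}, Remark \ref{M.L.Mittag-Leffer1 estimate} and Remark \ref{homonuous case}, together with the embeddings $L^{p} \hookrightarrow \dot{B}^{0,\phi}_{p,\infty} \hookrightarrow \dot{B}^{\gamma_{0},\phi}_{p_{0},\infty}$ and $\dot{B}^{0,\phi}_{r,1} \hookrightarrow L^{r}$ from Lemma \ref{embedding theorem}. After the substitution $\tau = ts$ the Duhamel integrals factor into a power of $t$ times a Beta integral of the form $B\big(\alpha - \tfrac{\alpha\kappa d}{2\delta r},\, 1 - \tfrac{\alpha(\kappa+1)}{q}\big)$ (or its $\dot{B}^{\gamma_{0},\phi}_{p_{0},\infty}$ analogue); the admissibility conditions — in particular $\frac{d}{r} < \frac{2\delta}{\kappa}$ and $\frac{d}{p} - \frac{2\delta}{1+\kappa} < \frac{d}{r}$ — guarantee that the exponents of $1-s$ and of $s$ both exceed $-1$, so these integrals converge, while the critical relation forces the power of $t$ to be exactly $t^{0}$. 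This yields the time-uniform estimates
\begin{align*}
\big\|\Theta w\big\|_{\mathcal{X}^{\alpha}} &\lesssim \big\|w_{0}\big\|_{\dot{B}^{\gamma_{0},\phi}_{p_{0},\infty}} + \big\|w\big\|^{\kappa+1}_{\mathcal{X}^{\alpha}}, \\
\big\|\Theta u - \Theta w\big\|_{\mathcal{X}^{\alpha}} &\lesssim \big(\big\|u\big\|^{\kappa}_{\mathcal{X}^{\alpha}} + \big\|w\big\|^{\kappa}_{\mathcal{X}^{\alpha}}\big)\big\|u - w\big\|_{\mathcal{X}^{\alpha}},
\end{align*}
where the second bound uses \textbf{Assumption H2}.

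Next I would close the fixed point. Fixing $C$ to be the implied constant above, I would choose $R>0$ and $\varepsilon>0$ with $C\varepsilon \le R/2$ and $CR^{\kappa} \le 1/2$; then for $\|w_{0}\|_{\dot{B}^{\gamma_{0},\phi}_{p_{0},\infty}} \le \varepsilon$ the map $\Theta$ sends the closed ball $\mathcal{B}_{R} = \{w \in \mathcal{X}^{\alpha} : \|w\|_{\mathcal{X}^{\alpha}} \le R\}$ into itself and obeys $\|\Theta u - \Theta w\|_{\mathcal{X}^{\alpha}} \le \tfrac{1}{2}\|u - w\|_{\mathcal{X}^{\alpha}}$, so the Banach fixed point theorem produces a unique global mild solution $w \in \mathcal{X}^{\alpha}$. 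The membership $\Theta w \in C\big([0,\infty); \dot{B}^{\gamma_{0},\phi}_{p_{0},\infty}\big) \cap C_{\alpha,q}\big((0,\infty); L^{r}\big)$ is obtained exactly as in Theorem \ref{Local exist of mild solution}, by splitting the Duhamel term at the intermediate time $t_{1}$ and applying the Lebesgue dominated convergence theorem, the only change being that the finite endpoint $T$ is replaced by $\infty$ while the convergent Beta integrals supply the required integrable domination. Finally, the Lipschitz dependence $\tilde{w}_{0} \mapsto \tilde{w}$ follows by applying the difference estimate to two solutions and absorbing the contraction term, giving $\|\tilde{w} - w\|_{\mathcal{X}^{\alpha}} \lesssim \|\tilde{w}_{0} - w_{0}\|_{\dot{B}^{\gamma_{0},\phi}_{p_{0},\infty}}$ on a small neighborhood $V$ of $w_{0}$.

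The main obstacle is precisely the criticality: since no positive power of $T$ is available, every estimate must be performed on the infinite time interval and shown to be exactly scale-invariant, so the convergence of the global-in-time Beta integrals — together with the exact cancellation that produces $t^{0}$ under $p = \frac{d\kappa}{2\delta}$ — is what the whole argument hinges on. Once these are confirmed, the smallness of the initial data plays the role that smallness of $T$ played in the local theorem, and the contraction closes in the same manner.
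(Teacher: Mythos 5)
Your proposal is correct and takes essentially the same approach as the paper: its proof likewise reruns the contraction argument of Theorem \ref{Local exist of mild solution} on $\mathcal{X}^{\alpha}$, exploiting the critical relation $p=\frac{d\kappa}{2\delta}$ so that the time prefactor $T^{\alpha-\frac{\alpha\kappa d}{2\delta p}}$ degenerates to $1$ and the required smallness comes from $\|w_{0}\|_{\dot{B}^{\gamma_{0},\phi}_{p_{0},\infty}}$ (choosing $R$ with $\|w_{0}\|_{\dot{B}^{\gamma_{0},\phi}_{p_{0},\infty}}<\frac{R}{2C}$ and $CR^{\kappa}<\frac{1}{2}$) rather than from a short time interval. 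Your write-up merely makes explicit the scale-invariance and Beta-integral convergence checks that the paper compresses into the identity $\frac{\kappa d}{2\delta r}+\frac{\kappa+1}{q}=1$ and the phrase ``following a procedure entirely similar to Theorem \ref{Local exist of mild solution}.''
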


\begin{proof}
The proof follows a structure entirely analogous to Theorem \ref{Local exist of mild solution}, with appropriate modifications.

By Definition \ref{D.f.mild solution}, we need to verify that the operator \(\Theta\), defined as
\[
\Theta w(t, x) = \mathcal{S}_{\alpha, \phi}(t)w_{0}(x) + \frac{1}{i} \int_{0}^{t} (t - \tau)^{\alpha - 1} \mathcal{P}_{\alpha, \phi}(t - \tau)g(w(\tau, x)) \, d\tau, \quad t \in [0, \infty),
\]
has a unique fixed point.

Noting that \(w_{0} \in \dot{B}^{\gamma_{0},\phi}_{p_{0},\infty}\) with sufficiently small norm, there exists a small constant \(R > 0\) such that \(\|w_{0}\|_{\dot{B}^{\gamma_{0},\phi}_{p_{0},\infty}} < \frac{R}{2C}\) and \(CR^{\kappa} < \frac{1}{2}\).

Consider the closed ball \(\mathcal{B}_R\) defined by
\[
\mathcal{B}_R = \left\{ w \in \mathcal{X}^{\alpha} : \|w\|_{\mathcal{X}^{\alpha}} \leq  R \right\}.
\]
Since \(p = \frac{d\kappa}{2\delta}\), we have
\[
\frac{\kappa d}{2\delta r} + \frac{\kappa + 1}{q} = 1.
\]
Following a procedure entirely similar to Theorem \ref{Local exist of mild solution}, we deduce that \(\Theta w \in C([0, \infty); \dot{B}^{\gamma_{0}, \phi}_{p_{0}, \infty}) \cap C_{\alpha, q}((0, \infty); L^{r}(\mathbb{R}^{d}))\), and
\begin{align*}
\|\Theta w\|_{\mathcal{X}^{\alpha}}&\leq  \sup_{t\geq 0}\|\Theta w(t)\|_{\dot{B}^{\gamma_{0},\phi}_{p_{0},\infty}}+\sup_{t>0}t^{\frac{\alpha}{q}}\|\Theta w\|_{L^{r}(\mathbb{R}^{d})}\leq R
\end{align*}
Additionally,
\[
\|\Theta u - \Theta w\|_{\mathcal{X}^{\alpha}} < \|u - w\|_{\mathcal{X}^{\alpha}}.
\]
Therefore, by the Banach fixed-point theorem, the proof is complete.
\end{proof}
\begin{corollary}
In Theorem {\rm\ref{Local exist of mild solution}} and Theorem {\rm\ref{global well-posedbess mild solution}}, the restriction $r < p(1+\kappa)$ is unnecessary.
\end{corollary}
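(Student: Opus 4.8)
The plan is to re-examine the proofs of Theorems \ref{Local exist of mild solution} and \ref{global well-posedbess mild solution} and isolate exactly where the hypothesis $r<p(1+\kappa)$ is invoked. Tracing the argument, it enters in only one place: the estimate of the Duhamel term in the $\dot{B}^{\gamma_{0},\phi}_{p_{0},\infty}$-norm. There the original proof controls this norm by the $L^{p}$-norm through the embedding $L^{p}\hookrightarrow\dot{B}^{0,\phi}_{p,\infty}\hookrightarrow\dot{B}^{\gamma_{0},\phi}_{p_{0},\infty}$ of Lemma \ref{embedding theorem}, and then applies the $L^{r/(1+\kappa)}$--$L^{p}$ bound for $t^{\alpha-1}\mathcal{P}_{\alpha,\phi}(t)$ from Lemma \ref{operator P}; the latter requires the source exponent to be no larger than the target, i.e. $\frac{r}{1+\kappa}\le p$, which is precisely $r\le p(1+\kappa)$. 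Every other estimate---the $L^{r}$-bound of the Duhamel term, the self-mapping and contraction estimates, and the continuity-in-time arguments---never uses this restriction. So it suffices to supply an alternative route for the single $\dot{B}^{\gamma_{0},\phi}_{p_{0},\infty}$-estimate.

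First I would replace the detour through $L^{p}$ by a direct application of the Sobolev estimate for $t^{\alpha-1}\mathcal{P}_{\alpha,\phi}(t)$ (Lemma \ref{Sobolev estimates operator P} together with its homogeneous version in Remark \ref{homonuous case}, or the wider-range variant in Remark \ref{M.L.Mittag-Leffer1 estimate}). Using $L^{r/(1+\kappa)}\hookrightarrow\dot{B}^{0,\phi}_{r/(1+\kappa),\infty}$, I would estimate $\mathcal{P}_{\alpha,\phi}(t-\tau)g(w(\tau))$ directly from $\dot{B}^{0,\phi}_{r/(1+\kappa),\infty}$ into $\dot{B}^{\gamma_{0},\phi}_{p_{0},\infty}$. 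A short computation shows that with source smoothness $0$ and integrability $r/(1+\kappa)$ and target smoothness $\gamma_{0}=\frac{1}{\delta}(\frac{d}{p_{0}}-\frac{d}{p})$ and integrability $p_{0}$, the scaling index of this map equals
\[
\frac{1}{2\delta}\left(\gamma_{0}\delta+\frac{d(1+\kappa)}{r}-\frac{d}{p_{0}}\right)=\frac{d}{2\delta}\left(\frac{1+\kappa}{r}-\frac{1}{p}\right),
\]
so that, together with the Duhamel kernel $(t-\tau)^{\alpha-1}$, the integrand carries exactly the exponent $(t-\tau)^{\alpha-1-\frac{\alpha d}{2\delta}(\frac{1+\kappa}{r}-\frac{1}{p})}$ appearing in the original proof. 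Hence, once this step is justified, the remaining Beta-function bookkeeping and all subsequent inequalities carry over verbatim.

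Next I would verify the hypotheses of the Sobolev estimate in this new form. The regularity constraint to be checked is $\frac{d}{2\delta}(\frac{1+\kappa}{r}-\frac{1}{p})<1$ (or $<2$ via Remark \ref{M.L.Mittag-Leffer1 estimate}); since $\frac{d}{r}\le\frac{d}{p}$ because $r\ge p$, and $\frac{d\kappa}{r}<2\delta$ by the admissibility hypothesis $\frac{d}{r}<\frac{2\delta}{\kappa}$, we obtain $\frac{d(1+\kappa)}{r}=\frac{d}{r}+\frac{d\kappa}{r}<\frac{d}{p}+2\delta$, so the constraint holds regardless of the sign of $\frac{1+\kappa}{r}-\frac{1}{p}$. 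The only genuinely new requirement is that the source integrability not exceed the target, i.e. $\frac{r}{1+\kappa}\le p_{0}$. Because $p_{0}$ is a free parameter subject only to $p<p_{0}\le r$ and $\frac{r}{1+\kappa}\le r$, I would simply take $p_{0}\in[\,\frac{r}{1+\kappa}\vee p,\,r\,]$ with $p_{0}>p$; this interval is nonempty since $\frac{r}{1+\kappa}<r$, and when $r<p(1+\kappa)$ it reduces to the original range. Moreover, when $r\ge p(1+\kappa)$ the exponent $\frac{1+\kappa}{r}-\frac{1}{p}\le 0$, so the factor $(t-\tau)^{\alpha-1-\frac{\alpha d}{2\delta}(\frac{1+\kappa}{r}-\frac{1}{p})}$ is even more integrable near $\tau=t$, while the $\tau\to 0^{+}$ behaviour is still governed by $\tau^{-\alpha(\kappa+1)/q}$ with $q>1+\kappa$, so both endpoints of the Beta integral remain convergent.

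The main obstacle I anticipate is the bookkeeping around $p_{0}$: one must confirm that dropping $r<p(1+\kappa)$ while insisting on $\frac{r}{1+\kappa}\le p_{0}$ still leaves a legitimate choice of $p_{0}$ compatible with all remaining constraints (in particular $p<p_{0}\le r$ and $\gamma_{0}=\frac{1}{\delta}(\frac{d}{p_{0}}-\frac{d}{p})\ge 0$), and that this choice does not disturb the self-mapping exponent $\alpha-\frac{\alpha\kappa d}{2\delta p}$, which depends only on $p,\kappa,d,\delta$ and not on $p_{0}$. Once this is confirmed, the fixed-point and Lipschitz-continuity arguments of Theorems \ref{Local exist of mild solution} and \ref{global well-posedbess mild solution} apply without change, showing that the restriction $r<p(1+\kappa)$ is indeed superfluous.
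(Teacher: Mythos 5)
Your diagnosis of \emph{where} the restriction enters is accurate (it is used only to run the nonlinear term through $L^{r/(1+\kappa)}\to L^{p}\hookrightarrow \dot{B}^{\gamma_{0},\phi}_{p_{0},\infty}$, which needs $\tfrac{r}{1+\kappa}\le p$), and your scaling bookkeeping is done correctly. But the replacement estimate you propose is not just unjustified by the cited lemmas --- it is false exactly in the regime $r>p(1+\kappa)$ that the corollary is about. Note first that with the paper's convention $p<p_{0}$, the smoothness $\gamma_{0}=\tfrac{1}{\delta}\bigl(\tfrac{d}{p_{0}}-\tfrac{d}{p}\bigr)$ is \emph{negative}, so applying Lemma \ref{Sobolev estimates operator P} (or Remark \ref{M.L.Mittag-Leffer1 estimate}, Remark \ref{homonuous case}) with source smoothness $0$ and target smoothness $\gamma_{0}$ violates the hypothesis $b\ge a$. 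That hypothesis, together with ``source integrability $\le$ target integrability'', is precisely what forces the scaling index $\frac{1}{2\delta}\bigl((b-a)\delta+\frac{d}{p_{\mathrm{src}}}-\frac{d}{p_{\mathrm{tgt}}}\bigr)$ to be nonnegative, i.e.\ forces the operator to \emph{lose}, never gain, powers of $t$. Your index equals $\frac{d}{2\delta}\bigl(\frac{1+\kappa}{r}-\frac{1}{p}\bigr)$, which is negative when $r>p(1+\kappa)$ and is independent of $p_{0}$, so no choice of $p_{0}$ can repair it. An estimate of the form
\[
\bigl\|\mathcal{P}_{\alpha,\phi}(t)h\bigr\|_{\dot{B}^{\gamma_{0},\phi}_{p_{0},\infty}}\lesssim t^{\varepsilon}\,\bigl\|h\bigr\|_{\dot{B}^{0,\phi}_{r/(1+\kappa),\infty}},\qquad \varepsilon>0,
\]
cannot hold: since $E_{\alpha,\alpha}(0)=1/\Gamma(\alpha)\neq 0$, for a nonzero Schwartz $h$ with $\widehat{h}$ supported in an annulus one has $\mathcal{P}_{\alpha,\phi}(t)h\to\frac{1}{\Gamma(\alpha)}h$ as $t\to 0^{+}$, so the left side stays bounded away from zero while the right side tends to zero. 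Equivalently, $\dot{B}^{0,\phi}_{r/(1+\kappa),\infty}$ lies on the scaling line $\delta s-\tfrac{d}{p}=-\tfrac{d(1+\kappa)}{r}$ while the target lies on the line $-\tfrac{d}{p}$; for $r>p(1+\kappa)$ the source line is strictly above the target line, and neither the embeddings of Lemma \ref{embedding theorem} nor the operator estimates can move you \emph{down} a scaling line --- the low-frequency part of $g(w)$ obstructs it.

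This is why the paper does not attempt to re-run the fixed point with the large $r$: the information $w(\tau)\in L^{r}$ alone is genuinely too weak to place the Duhamel term in $\dot{B}^{\gamma_{0},\phi}_{p_{0},\infty}$. The paper's proof instead applies Theorem \ref{Local exist of mild solution} with a \emph{reduced} admissible triple $(p,\tilde{r},\tilde{q})$, $\tilde{r}=p(1+\kappa)-\varsigma$, obtaining a solution in $C\bigl([0,T^{*}];\dot{B}^{\gamma_{0},\phi}_{p_{0},\infty}\bigr)\cap C_{\alpha,\tilde{q}}\bigl((0,T^{*}];L^{\tilde{r}}\bigr)$; the essential gain is that this retains $g(w(\tau))\in L^{\tilde{r}/(1+\kappa)}$ with $\tfrac{\tilde{r}}{1+\kappa}<p$, for which the original route $L^{\tilde{r}/(1+\kappa)}\to L^{p}\hookrightarrow\dot{B}^{\gamma_{0},\phi}_{p_{0},\infty}$ is available, and only afterwards recovers the weighted $L^{r}$ bound (with the original $r$) a posteriori from the integral equation, using the operator estimate $L^{\tilde{r}/(1+\kappa)}\to L^{r}$, whose index is nonnegative. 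A secondary flaw in your write-up, which would matter even if your estimate were valid: $p_{0}$ is not a free parameter --- it determines both the initial-data space $\dot{B}^{\gamma_{0},\phi}_{p_{0},\infty}$ and the solution space $\mathcal{X}^{\alpha}_{T}$ --- so imposing $p_{0}\ge\tfrac{r}{1+\kappa}$ does not remove the restriction $r<p(1+\kappa)$ but trades it for a new restriction on the class of initial data, which is a different (weaker) statement than the corollary.
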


\begin{proof}
We only explain the case for Theorem \ref{Local exist of mild solution}, as the proof for Theorem \ref{global well-posedbess mild solution} is entirely analogous. Indeed,
if $r \geq p(1+\kappa)$, then we can choose a sufficiently small $\varsigma > 0$ such that $(p, \tilde{r}, \tilde{q})$ forms an admissible triple, where $\tilde{r} = p(1+\kappa) - \varsigma$, and
$$
\frac{d}{2\delta}\left(\frac{1+\kappa}{\tilde{r}} - \frac{1}{r}\right) < 1.
$$
By Theorem \ref{Local exist of mild solution}, the equation \eqref{E.Q.T.F.S.E} possesses a unique mild solution $w$ in
$C\big([0, T^{*}]; \dot{B}^{\gamma_{0}, \phi}_{p_{0}, \infty}\big) \cap C_{\alpha, \tilde{q}}\big((0, T^{*}]; L^{\tilde{r}}(\mathbb{R}^{d})\big)$ for some $T^{*} > 0$. Moreover, we obtain that
\begin{align*}
\big\|w\big\|_{L^{r}(\mathbb{R}^{d})} &\lesssim \big\|\mathcal{S}_{\alpha,\phi}(t)w_{0}\big\|_{L^{r}} + \left\|\int_{0}^{t}(t-\tau)^{\alpha-1} \mathcal{P}_{\alpha,\phi}(t-\tau)g(w(\tau,x))\,d\tau\right\|_{L^{r}} \\
&\lesssim t^{-\frac{\alpha}{q}}\big\|w_{0}\big\|_{\dot{B}^{\gamma_{0},\phi}_{p_{0},\infty}} + \int_{0}^{t}(t-\tau)^{\alpha-1-\frac{\alpha d}{2\delta}\left(\frac{1+\kappa}{\tilde{r}}-\frac{1}{r}\right)}\tau^{-\frac{\alpha(1+\kappa)}{\tilde{q}}} \big(\tau^{\frac{\alpha}{q}}\|w(\tau)\|_{L^{\tilde{r}}}\big)^{1+\kappa}\,d\tau \\
&\lesssim t^{-\frac{\alpha}{q}}\big\|w_{0}\big\|_{\dot{B}^{\gamma_{0},\phi}_{p_{0},\infty}} + t^{-\frac{\alpha}{q}} (T^{*})^{\alpha-\frac{\alpha\kappa d}{2\delta p}}\big\|w\big\|_{C_{\alpha, \tilde{q}}\big((0, T^{*}]; L^{\tilde{r}}(\mathbb{R}^{d})\big)},
\end{align*}
Furthermore, the continuity of $w \in C_{\alpha, q}\big((0, T^{*}]; L^{r}(\mathbb{R}^{d})\big)$ can be proved similarly. Hence, we conclude $w \in \mathcal{X}_{T^{*}}^{\alpha}$, which completes the proof.
\end{proof}
 In the following, we prove the asymptotic behavior of the E.q. \eqref{E.Q.T.F.S.E} as \( t \rightarrow \infty \).

\begin{theorem}\label{asymptotic behavior of mild solutions}
Let \( u \) and \( w \) be two global mild solutions of E.q. \eqref{E.Q.T.F.S.E} obtained from Theorem \ref{global well-posedbess mild solution}, corresponding to initial values \( u_{0} \) and \( w_{0} \), respectively. Then we have that
\begin{align*}
\lim_{t\rightarrow\infty}\big\|u-w\big\|_{\dot{B}^{\gamma_{0},\phi}_{p_{0},\infty}}+\lim_{t\rightarrow\infty}t^{\frac{\alpha}{q}}
\big\|u(t)-w(t)\big\|_{L^{r}}=0
\end{align*}
if and only if
\begin{align*}
\lim_{t\rightarrow\infty}\big\|\mathcal{S}_{\alpha,\phi}(t)(u_{0}-w_{0})\big\|_{\dot{B}^{\gamma_{0},\phi}_{p_{0},\infty}}
+\lim_{t\rightarrow\infty}t^{\frac{\alpha}{q}}\big\|\mathcal{S}_{\alpha,\phi}(t)(u_{0}-w_{0})\big\|_{L^{r}}=0
\end{align*}
\end{theorem}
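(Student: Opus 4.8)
The plan is to subtract the two Duhamel representations. Writing both $u$ and $w$ through the mild-solution formula \eqref{mild solution} and taking the difference, one obtains
\begin{align*}
u(t)-w(t)=\mathcal{S}_{\alpha,\phi}(t)(u_{0}-w_{0})+D(t),\quad D(t):=\frac{1}{i}\int_{0}^{t}(t-\tau)^{\alpha-1}\mathcal{P}_{\alpha,\phi}(t-\tau)\big(g(u(\tau))-g(w(\tau))\big)\,d\tau.
\end{align*}
Since the two stated conditions differ only by the term $D(t)$, the whole statement reduces to showing that the Duhamel remainder is asymptotically negligible, in the precise sense that its contribution to both norms is controlled by the limit superior of the $L^{r}$-part of the difference. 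Accordingly I set $\eta(\tau):=\tau^{\frac{\alpha}{q}}\|u(\tau)-w(\tau)\|_{L^{r}}$, which is bounded by $M:=\|u\|_{\mathcal{X}^{\alpha}}+\|w\|_{\mathcal{X}^{\alpha}}$ (recall $u,w\in\mathcal{X}^{\alpha}$), and $L_{\eta}:=\limsup_{\tau\to\infty}\eta(\tau)$.

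Next I would estimate $\|D(t)\|_{\dot{B}^{\gamma_{0},\phi}_{p_{0},\infty}}$ and $t^{\frac{\alpha}{q}}\|D(t)\|_{L^{r}}$ exactly as in the proof of Theorem \ref{global well-posedbess mild solution}, using Assumption \textbf{H2}, Lemma \ref{Sobolev estimates operator P}, Remark \ref{M.L.Mittag-Leffer1 estimate} and Remark \ref{homonuous case}, but retaining the factor $\eta(\tau)$ inside the time integral instead of bounding it by $M$. Since \textbf{H2} yields $\|g(u(\tau))-g(w(\tau))\|_{L^{r/(1+\kappa)}}\lesssim(\|u\|_{\mathcal{X}^{\alpha}}^{\kappa}+\|w\|_{\mathcal{X}^{\alpha}}^{\kappa})\tau^{-\frac{\alpha(\kappa+1)}{q}}\eta(\tau)$, both quantities are governed by $\eta$ alone (the nonlinearity only sees the $L^{r}$ difference). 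This gives bounds of the form
\begin{align*}
\|D(t)\|_{\dot{B}^{\gamma_{0},\phi}_{p_{0},\infty}}&\lesssim\big(\|u\|_{\mathcal{X}^{\alpha}}^{\kappa}+\|w\|_{\mathcal{X}^{\alpha}}^{\kappa}\big)\int_{0}^{t}(t-\tau)^{a'-1}\tau^{b-1}\eta(\tau)\,d\tau,\\
t^{\frac{\alpha}{q}}\|D(t)\|_{L^{r}}&\lesssim\big(\|u\|_{\mathcal{X}^{\alpha}}^{\kappa}+\|w\|_{\mathcal{X}^{\alpha}}^{\kappa}\big)t^{\frac{\alpha}{q}}\int_{0}^{t}(t-\tau)^{a-1}\tau^{b-1}\eta(\tau)\,d\tau,
\end{align*}
with $a=\alpha-\frac{\alpha d\kappa}{2\delta r}$, $a'=\alpha-\frac{\alpha d}{2\delta}\big(\frac{1+\kappa}{r}-\frac{1}{p}\big)$ and $b=1-\frac{\alpha(\kappa+1)}{q}$, all positive under the admissibility hypotheses (these are exactly the exponents whose Beta integrals already appear in Theorem \ref{global well-posedbess mild solution}). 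The decisive step is the rescaling $\tau=ts$: because the triple is critical, i.e. $p=\frac{d\kappa}{2\delta}$, the powers of $t$ cancel precisely, leaving
\begin{align*}
\|D(t)\|_{\dot{B}^{\gamma_{0},\phi}_{p_{0},\infty}}\lesssim\big(\|u\|_{\mathcal{X}^{\alpha}}^{\kappa}+\|w\|_{\mathcal{X}^{\alpha}}^{\kappa}\big)\int_{0}^{1}(1-s)^{a'-1}s^{b-1}\eta(ts)\,ds,
\end{align*}
and likewise for the $L^{r}$ term with $a'$ replaced by $a$. Since $\eta(ts)\le M$ and $(1-s)^{a-1}s^{b-1}\in L^{1}(0,1)$, the reverse Fatou lemma (together with $\limsup_{t\to\infty}\eta(ts)\le L_{\eta}$ for each fixed $s\in(0,1)$, as $ts\to\infty$) gives
\begin{align*}
\limsup_{t\to\infty}\|D(t)\|_{\dot{B}^{\gamma_{0},\phi}_{p_{0},\infty}}+\limsup_{t\to\infty}t^{\frac{\alpha}{q}}\|D(t)\|_{L^{r}}\le C_{0}\big(\|u\|_{\mathcal{X}^{\alpha}}^{\kappa}+\|w\|_{\mathcal{X}^{\alpha}}^{\kappa}\big)L_{\eta}.
\end{align*}

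Finally I would close the argument by the triangle inequality applied to $u-w=\mathcal{S}_{\alpha,\phi}(t)(u_{0}-w_{0})+D(t)$ in both directions and taking $\limsup_{t\to\infty}$. From $\eta(t)\le t^{\frac{\alpha}{q}}\|\mathcal{S}_{\alpha,\phi}(t)(u_{0}-w_{0})\|_{L^{r}}+t^{\frac{\alpha}{q}}\|D(t)\|_{L^{r}}$ one gets $L_{\eta}\le \limsup_{t\to\infty}t^{\frac{\alpha}{q}}\|\mathcal{S}_{\alpha,\phi}(t)(u_{0}-w_{0})\|_{L^{r}}+C_{0}(\|u\|_{\mathcal{X}^{\alpha}}^{\kappa}+\|w\|_{\mathcal{X}^{\alpha}}^{\kappa})L_{\eta}$; because the global solutions of Theorem \ref{global well-posedbess mild solution} have small norm, $C_{0}(\|u\|_{\mathcal{X}^{\alpha}}^{\kappa}+\|w\|_{\mathcal{X}^{\alpha}}^{\kappa})<1$, so the $L_{\eta}$ term is absorbed on the left. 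Hence the right-hand condition forces $L_{\eta}=0$, and then the analogous Besov inequality forces $\limsup_{t\to\infty}\|u-w\|_{\dot{B}^{\gamma_{0},\phi}_{p_{0},\infty}}=0$, giving the left-hand condition; the converse implication follows symmetrically by writing $\mathcal{S}_{\alpha,\phi}(t)(u_{0}-w_{0})=(u-w)-D(t)$ and using $L_{\eta}=0$ to annihilate the remainder. The main obstacle is the middle step: at this critical scaling the naive splitting $\int_{0}^{t/2}+\int_{t/2}^{t}$ does \emph{not} produce decay of the small-$\tau$ part, so one must instead exploit the exact scale invariance (the cancellation of the $t$-powers) to reduce to the fixed kernel $(1-s)^{a-1}s^{b-1}$ and then pass the $\limsup$ through the integral via reverse Fatou, which requires verifying $a,a',b>0$ so that this kernel is integrable and serves as the dominating function.
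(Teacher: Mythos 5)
Your proposal is correct and takes essentially the same route as the paper's proof: subtracting the Duhamel representations, using Assumption H2 together with the $\mathcal{P}_{\alpha,\phi}$ Sobolev estimates, rescaling $\tau = ts$ so that criticality $p=\frac{d\kappa}{2\delta}$ makes the $t$-powers cancel and leaves a fixed Beta-type kernel on $[0,1]$, passing the $\limsup$ through the integral (reverse Fatou with the bounded dominating function), absorbing via the smallness $CR^{\kappa}<1$, and handling the converse by the same triangle inequality. The only cosmetic difference is that you track the weighted $L^{r}$ quantity $\eta(t)$ alone and deduce the Besov decay afterwards, whereas the paper works with the combined functional $J(t)$ containing both norms---an immaterial variant.
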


\begin{proof}
\(\Leftarrow\)
We consider the norm of \( \|I(u,w)\|_{\mathcal{X}^{\alpha}} \), where
\[
I(u,w)(t)=\frac{1}{i}\int_{0}^{t}(t-\tau)^{\alpha-1}\mathcal{P}_{\alpha,\phi}(t-\tau)\big(g(u(\tau))-g(w(\tau))\big)d\tau.
\]
Note that Theorem \ref{global well-posedbess mild solution} holds, so we have \( \big\|u\big\|_{\mathcal{X}^{\alpha}}\leq R \) and \( \big\|w\big\|_{\mathcal{X}^{\alpha}}\leq R \), where \( R > 0 \) is a sufficiently small constant.
We obtain that
\begin{align*}
\big\|I(u,w)\big\|_{\dot{B}^{\gamma_{0},\phi}_{p_{0},\infty}} &\leq \big\|I(u,w)\big\|_{L^{p}(\mathbb{R}^{d})} \\
&\lesssim \int_{0}^{t}(t-\tau)^{\alpha-1-\frac{\alpha d}{2\delta}(\frac{1+\kappa}{r}-\frac{1}{p})}\big(\|u\|_{L^{r}}^{\kappa}+\|w\|_{L^{r}}^{\kappa}\big)\|u(\tau)-w(\tau)
\|_{L^{r}(\mathbb{R}^{d})}\,d\tau \\
&\lesssim R^{\kappa}\int_{0}^{t}(t-\tau)^{\alpha-1-\frac{\alpha d}{2\delta}(\frac{1+\kappa}{r}-\frac{1}{p})}\tau^{-\frac{\alpha\kappa}{q}}\big\|u(\tau)-w(\tau)\big\|_{L^{r}}\,d\tau,
\end{align*}
and
\begin{align*}
\big\|I(u,w)\big\|_{L^{r}(\mathbb{R}^{d})} \lesssim R^{\kappa}\int_{0}^{t}(t-\tau)^{\alpha-1-\frac{\alpha\kappa d}{2\delta r}}\tau^{-\frac{\alpha\kappa}{q}}\big\|u(\tau)-w(\tau)\big\|_{L^{r}}\,d\tau.
\end{align*}
We denote that
\[
J(t)=\big\|u(t)-w(t)\big\|_{\dot{B}^{\gamma_{0},\phi}_{p_{0},\infty}}+t^{\frac{\alpha}{q}}\big\|u(t)-w(t)\big\|_{L^{r}(\mathbb{R}^{d})},
\]
then we obtain that
\begin{align*}
&\big\|I(u,w)(t)\big\|_{\dot{B}^{\gamma_{0},\phi}_{p_{0},\infty}}+t^{\frac{\alpha}{q}}\big\|I(u,w)(t)\big\|_{L^{r}(\mathbb{R}^{d})} \\
&\lesssim R^{\kappa}\int_{0}^{t}(t-\tau)^{\alpha-1-\frac{\alpha d}{2\delta}(\frac{1+\kappa}{r}-\frac{1}{p})}\tau^{-\frac{\alpha(\kappa+1)}{q}}J(\tau)\,d\tau \\
&\quad +R^{\kappa}t^{\frac{\alpha}{q}}\int_{0}^{t}(t-\tau)^{\alpha-1-\frac{\alpha \kappa d}{2\delta r}}\tau^{-\frac{\alpha(\kappa+1)}{q}}J(\tau)\,d\tau \\
&\lesssim R^{\kappa}\int_{0}^{1}(1-\tau)^{\alpha-1-\frac{\alpha d}{2\delta}(\frac{1+\kappa}{r}-\frac{1}{p})}\tau^{-\frac{\alpha(\kappa+1)}{q}}J(t \tau)\,d\tau \\
&\quad +R^{\kappa}\int_{0}^{1}(1-\tau)^{\alpha-1-\frac{\alpha \kappa d}{2\delta r}}\tau^{-\frac{\alpha(\kappa+1)}{q}}J(t \tau)\,d\tau,
\end{align*}
hence we obtain that
\begin{align*}
\limsup_{t\rightarrow\infty}J(t) &\lesssim \limsup_{t\rightarrow\infty} \bigg[\big\|\mathcal{S}_{\alpha,\phi}(t)(u_{0}-w_{0})\big\|_{\dot{B}^{\gamma_{0},\phi}_{p_{0},\infty}}+t^{\frac{\alpha}{q}}
\big\|\mathcal{S}_{\alpha,\phi}(t)(u_{0}-w_{0})\big\|_{L^{r}}\bigg] \\
&\quad +\limsup_{t\rightarrow\infty} \bigg[\big\|I(u,w)(t)\big\|_{\dot{B}^{\gamma_{0},\phi}_{p_{0},\infty}}+t^{\frac{\alpha}{q}}\big\|I(u,w)(t)\big\|_{L^{r}(\mathbb{R}^{d})}\bigg] \\
&\lesssim R^{\kappa}\int_{0}^{1}(1-\tau)^{\alpha-1-\frac{\alpha d}{2\delta}(\frac{1+\kappa}{r}-\frac{1}{p})}\tau^{-\frac{\alpha(\kappa+1)}{q}}\limsup_{t\rightarrow\infty}J(t \tau)\,d\tau \\
&\quad +R^{\kappa}\int_{0}^{1}(1-\tau)^{\alpha-1-\frac{\alpha \kappa d}{2\delta r}}\tau^{-\frac{\alpha(\kappa+1)}{q}}\limsup_{t\rightarrow\infty}J(t \tau)\,d\tau \\
&\leq CR^{\kappa}\limsup_{t\rightarrow\infty}J(t),
\end{align*}
the constant \( R \) is small enough to ensure \( CR^{\kappa} < 1 \), hence this implies that \( \lim_{t\rightarrow\infty}J(t) = 0 \).

\(\Rightarrow\)
Note that
\[
\mathcal{S}_{\alpha,\phi}(t)\big(u_{0}-w_{0}\big)=u(t)-w(t)-I(u,w)(t),
\]
hence we obtain that
\begin{align*}
\lim_{t\rightarrow\infty}\big\|\mathcal{S}_{\alpha,\phi}(t)(u_{0}-w_{0})\big\|_{\dot{B}^{\gamma_{0},\phi}_{p_{0},\infty}}
+\lim_{t\rightarrow\infty}t^{\frac{\alpha}{q}}\big\|\mathcal{S}_{\alpha,\phi}(t)(u_{0}-w_{0})\big\|_{L^{r}}\leq (CR^{\kappa}+1)\limsup_{t\rightarrow\infty}J(t)=0.
\end{align*}
This completes the proof.
\end{proof}

Moreover, we also can construct the following local well-posedness of the mild solution to the E.q. \eqref{E.Q.T.F.S.E}.

\begin{theorem} \label{Local Well-posedness 1}

Let \((p, r, q)\) be a triple of admissible parameters and satisfy $p>d/2\delta$. If \textbf{Assumption H2} holds, and \(\kappa\) satisfies
\[
\frac{d(2\kappa+1)}{\delta p(\kappa+1)}<2,
\]
for \(w_{0} \in H^{s, \phi}_{p}(\mathbb{R}^{d})\), there exists \(T^{*} > 0\) such that the E.q. \eqref{E.Q.T.F.S.E} is locally well-posed in \(\mathcal{Y}^{\alpha}_{T^{*}}\), where
\[
\mathcal{Y}^{\alpha}_{T^{*}} = C\left([0, T^{*}]; H^{s, \phi}_{p}(\mathbb{R}^{d})\right) \cap C_{\alpha, q}\left((0, T^{*}]; H^{s, \phi}_{r}(\mathbb{R}^{d})\right),
\]
and \(s\) satisfies
\[
\max\left\{0, \frac{(\kappa+1)d}{\kappa\delta p}-\frac{2}{\kappa}\right\} < s \leq \frac{d\kappa}{\delta p(\kappa+1)}.
\]
Moreover, for any \( T' \in (0, T^{\ast}) \), there exists a neighborhood \( V \) of \( w_{0} \) in the space \( H^{s, \phi}_{p} \) such that the mapping \( \tilde{w}_{0} \mapsto \tilde{w} \) from \( V \) to \( \mathcal{Y}_{T'}^{\alpha} \) is Lipschitz continuous.
\end{theorem}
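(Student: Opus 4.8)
The plan is to run a Banach contraction argument for the map
\[
\Theta w(t) = \mathcal{S}_{\alpha,\phi}(t)w_{0} + \frac{1}{i}\int_{0}^{t}(t-\tau)^{\alpha-1}\mathcal{P}_{\alpha,\phi}(t-\tau)g(w(\tau))\,d\tau
\]
on a small closed ball of $\mathcal{Y}^{\alpha}_{T^{*}}$, equipped with the norm $\|w\|_{\mathcal{Y}^{\alpha}_{T^{*}}} = \sup_{t\in[0,T^{*}]}\|w(t)\|_{H^{s,\phi}_{p}} + \sup_{t\in(0,T^{*}]}t^{\alpha/q}\|w(t)\|_{H^{s,\phi}_{r}}$, in exact parallel to the proof of Theorem \ref{Local exist of mild solution}. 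The first step is to transfer every linear estimate into the Bessel-potential scale. Since $\mathcal{S}_{\alpha,\phi}(t)$, $\mathcal{P}_{\alpha,\phi}(t)$ and $(I+\phi(\Delta))^{s/2}$ are all Fourier multipliers they commute, so, using $H^{s,\phi}_{p}\sim F^{s,\phi}_{p,2}$ together with the isomorphism property of the $\phi$-Bessel potential (Remark \ref{some remark of bernstein function}), I would write
\[
\|\mathcal{S}_{\alpha,\phi}(t)w_{0}\|_{H^{s,\phi}_{r}} = \|\mathcal{S}_{\alpha,\phi}(t)(I+\phi(\Delta))^{s/2}w_{0}\|_{L^{r}} \lesssim t^{-\alpha/q}\|w_{0}\|_{H^{s,\phi}_{p}}
\]
by Lemma \ref{operator S}, and likewise $\|\mathcal{S}_{\alpha,\phi}(t)w_{0}\|_{H^{s,\phi}_{p}}\lesssim\|w_{0}\|_{H^{s,\phi}_{p}}$; the analogous bounds for $t^{\alpha-1}\mathcal{P}_{\alpha,\phi}(t)$ come from Lemma \ref{operator P} and Remark \ref{M.L.Mittag-Leffer1 estimate}. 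This reduces every linear contribution to the $L^{p}$--$L^{r}$ estimates already in hand.

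The heart of the argument is the nonlinear estimate, and this is where I expect the main obstacle. For the self-mapping bound I need a Moser-type (fractional Leibniz/composition) inequality
\[
\|g(w)\|_{H^{s,\phi}_{\rho}} \lesssim \|w\|_{L^{a}}^{\kappa}\,\|w\|_{H^{s,\phi}_{b}}, \qquad \tfrac{1}{\rho}=\tfrac{\kappa}{a}+\tfrac{1}{b},
\]
with $a,b$ chosen via the $\phi$-Sobolev embedding $H^{s,\phi}_{b}\hookrightarrow L^{a}$ (a consequence of Lemma \ref{embedding theorem}) so that the $\kappa+1$ factors reproduce $\|w\|_{\mathcal{Y}^{\alpha}_{T^{*}}}^{\kappa+1}$ with the correct power of $\tau$, together with the difference version $\|g(u)-g(w)\|_{H^{s,\phi}_{\rho}}\lesssim(\|u\|^{\kappa}+\|w\|^{\kappa})\|u-w\|$ for the contraction. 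The difficulty is that Assumption H2 supplies only first-order (Lipschitz-type) control of $g$, so the composition inequality at \emph{fractional} smoothness $s$ is delicate; I would establish it through a $\phi$-adapted Littlewood--Paley paraproduct decomposition and a Christ--Weinstein fractional chain rule, which is available precisely because $p>d/2\delta$ and $s\le \frac{d\kappa}{\delta p(\kappa+1)}$ force $0<s<2$, keeping $g$ in the regularity class the chain rule demands.

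With these two ingredients the rest is bookkeeping in the spirit of Theorem \ref{Local exist of mild solution}. Substituting the Moser estimate into the Duhamel term and applying the commuted operator bounds, each contribution is controlled by a Beta-type integral $\int_{0}^{t}(t-\tau)^{\alpha-1-c}\tau^{-\alpha(\kappa+1)/q}\,d\tau$; the admissibility relation $\frac1q=\frac{1}{2\delta}(\frac dp-\frac dr)$ together with $\frac dr<\frac{2\delta}{\kappa}$ keeps the $(t-\tau)$-exponent above $-1$, while the lower restriction $s>\frac{(\kappa+1)d}{\kappa\delta p}-\frac2\kappa$ is what forces the surviving power of $t$ to be strictly positive, so that the nonlinear terms carry a factor $T^{\theta}$ with $\theta>0$ and become a contraction once $T^{*}$ is small. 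I would also record that the two bounds on $s$ are compatible exactly under the standing hypothesis, since a direct computation shows $\frac{(\kappa+1)d}{\kappa\delta p}-\frac2\kappa<\frac{d\kappa}{\delta p(\kappa+1)}$ is equivalent to $\frac{d(2\kappa+1)}{\delta p(\kappa+1)}<2$; thus the admissible window for $s$ is nonempty.

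Finally, the continuity statements $\Theta w\in C([0,T^{*}];H^{s,\phi}_{p})\cap C_{\alpha,q}((0,T^{*}];H^{s,\phi}_{r})$ are obtained by splitting $\Theta w(t_{2})-\Theta w(t_{1})$ into the three pieces used in Theorem \ref{Local exist of mild solution} and passing to the limit by dominated convergence, now with the Bessel-potential norms replacing the Besov norms. The contraction principle then yields a unique fixed point $w\in\mathcal{Y}^{\alpha}_{T^{*}}$, and the Lipschitz dependence $\tilde w_{0}\mapsto\tilde w$ follows by estimating $\|\tilde w-w\|_{\mathcal{Y}^{\alpha}_{T'}}$ through the same linear bound on $\mathcal{S}_{\alpha,\phi}(t)(\tilde w_{0}-w_{0})$ and absorbing the nonlinear difference into the left-hand side. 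The only genuinely new input beyond Theorem \ref{Local exist of mild solution} is the fractional composition estimate of the second paragraph.
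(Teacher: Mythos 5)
There is a genuine gap at the step you yourself flag as the heart of the matter. Your plan is to estimate the nonlinearity at the \emph{positive} fractional smoothness $s$, via a Moser-type bound $\|g(w)\|_{H^{s,\phi}_{\rho}}\lesssim\|w\|^{\kappa}_{L^{a}}\|w\|_{H^{s,\phi}_{b}}$ plus its difference version, obtained from a paraproduct decomposition and a Christ--Weinstein fractional chain rule. Assumption H2 cannot support this. First, H2 only gives the H\"older-type bound $|g(u)-g(w)|\lesssim(|u|^{\kappa}+|w|^{\kappa})|u-w|$; $g$ need not even be $C^{1}$, while the chain rule at smoothness $s$ requires quantitative control of $g'$, and for $s>1$ essentially control of $g''$. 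Note that under the stated hypotheses $s$ may well exceed $1$: the upper bound is $s\le\frac{d\kappa}{\delta p(\kappa+1)}$ and with $\frac{d}{\delta p}$ close to $2$ this allows $s$ up to nearly $\frac{2\kappa}{\kappa+1}$, so your remark that $0<s<2$ ``keeps $g$ in the regularity class the chain rule demands'' does not close the issue. Second, and worse, the \emph{difference} estimate $\|g(u)-g(w)\|_{H^{s,\phi}_{\rho}}\lesssim(\|u\|^{\kappa}+\|w\|^{\kappa})\|u-w\|$ at positive fractional smoothness is a well-known obstruction for merely H2-type nonlinearities; it is precisely what one cannot prove in general, and no contraction can be run without it in your scheme.

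The paper's proof avoids this entirely, and the idea it uses is the one your proposal is missing: the hypothesis $s\le\frac{d\kappa}{\delta p(\kappa+1)}$ is there exactly so that $s^{*}:=(\kappa+1)s-\frac{\kappa d}{\delta p}\le 0$. The nonlinearity is then measured at the \emph{non-positive} regularity $s^{*}$: for the self-map bound the paper quotes Ribaud's composition theorem, $\|g(w)\|_{H^{s^{*},\phi}_{p}}\lesssim\|w\|^{\kappa+1}_{H^{s,\phi}_{p}}$, which holds under H2 alone precisely because the target regularity is non-positive (it reduces to Sobolev embeddings $H^{s,\phi}_{p}\hookrightarrow L^{a(\kappa+1)}$ and $L^{a}\hookrightarrow H^{s^{*},\phi}_{p}$); for the contraction, the difference $g(u)-g(w)$ is estimated purely in Lebesgue norms using H2 and then pushed into $H^{s^{*},\phi}_{p}$ by the same negative-order embedding, so no fractional chain rule for differences is ever needed. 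The regularity gap $s-s^{*}=\kappa\big(\frac{d}{\delta p}-s\big)$ is then recovered from the smoothing estimates for $\mathcal{S}_{\alpha,\phi}$ and $\mathcal{P}_{\alpha,\phi}$, which is where the time factor $T^{\alpha-\frac{\alpha\kappa}{2}(\frac{d}{\delta p}-s)}$ and the lower bound $s>\frac{(\kappa+1)d}{\kappa\delta p}-\frac{2}{\kappa}$ (equivalently, integrability of the Beta-type kernel) come from. Your linear-operator reductions, the Beta-integral bookkeeping, and your verification that the two constraints on $s$ are compatible iff $\frac{d(2\kappa+1)}{\delta p(\kappa+1)}<2$ are all correct; but the central nonlinear estimate as you propose it would fail, and replacing it by the $s^{*}\le 0$ device is not a cosmetic change --- it is the reason the theorem's hypotheses have the form they do.
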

\begin{proof}

We define the operator \(\Theta\) as in Theorem \ref{Local exist of mild solution}, i.e., for any \(T > 0\),
\[
\Theta w(t, x) = \mathcal{S}_{\alpha, \phi}(t)w_{0}(x) + \frac{1}{i} \int_{0}^{t} (t - \tau)^{\alpha - 1} \mathcal{P}_{\alpha, \phi}(t - \tau) g(w(\tau, x)) \, d\tau, \quad t \in [0, T].
\]
Let \(s^{*} = (\kappa + 1)s - \frac{\kappa d}{\delta p}\). By Ribaud \cite[Theorem 1.2]{Ribaud}, we have
\[
g(w) \in H^{s^{*}, \phi}_{p}(\mathbb{R}^{d}), \quad \text{and} \quad \|g(w)\|_{H^{s^{*}, \phi}_{p}} \leq C \|w\|_{H^{s, \phi}_{p}}^{\kappa + 1}.
\]
By Lemma \ref{operator S}, we obtain that
\[
\|\mathcal{S}_{\alpha, \phi}(t)w_{0}\|_{H^{s, \phi}_{p}} \lesssim \|w_{0}\|_{H^{s, \phi}_{p}}, \quad \text{and} \quad t^{\frac{\alpha}{q}} \|\mathcal{S}_{\alpha, \phi}(t)w_{0}\|_{H^{s, \phi}_{r}} \lesssim \|w_{0}\|_{H^{s, \phi}_{p}},
\]
and
\[
\left\| \int_{0}^{t} (t - \tau)^{\alpha - 1} \mathcal{P}_{\alpha, \phi}(t - \tau) g(w(\tau, x)) \, d\tau \right\|_{H^{s, \phi}_{p}} \lesssim \int_{0}^{t} (t - \tau)^{\alpha - 1 - \frac{\alpha}{2\delta}(\delta(s - s^{*}))} \|g(w)(\tau)\|_{H^{s^{*}, \phi}_{p}} \, d\tau
\]
\[
\lesssim \int_{0}^{t} (t - \tau)^{\alpha - 1 - \frac{\alpha}{2\delta}(\delta(s - s^{*}))} \|w(\tau)\|_{H^{s, \phi}_{p}}^{\kappa + 1} \, d\tau \lesssim T^{\alpha - \frac{\alpha \kappa}{2} \left(\frac{d}{\delta p} - s\right)} \|w\|_{\mathcal{Y}_{T}^{\alpha}}^{\kappa + 1},
\]
\[
\left\| \int_{0}^{t} (t - \tau)^{\alpha - 1} \mathcal{P}_{\alpha, \phi}(t - \tau) g(w(\tau, x)) \, d\tau \right\|_{H^{s, \phi}_{r}} \lesssim \int_{0}^{t} (t - \tau)^{\alpha - 1 - \frac{\alpha}{2\delta}(\delta(s - s^{*}) - (\frac{d}{p} - \frac{d}{r}))} \|g(w)(\tau)\|_{H^{s^{*}, \phi}_{p}} \, d\tau
\]
\[
\lesssim \int_{0}^{t} (t - \tau)^{\alpha - 1 - \frac{\alpha}{2\delta}(\delta(s - s^{*}) - (\frac{d}{p} - \frac{d}{r}))} \|w(\tau)\|_{H^{s, \phi}_{p}}^{\kappa + 1} \, d\tau \lesssim t^{-\frac{\alpha}{q}} T^{\alpha - \frac{\alpha \kappa}{2} \left(\frac{d}{\delta p} - s\right)} \|w\|_{\mathcal{Y}_{T}^{\alpha}}^{\kappa + 1}.
\]
This implies that, similar to Theorem \ref{Local exist of mild solution}, we can get
\[
\|\Theta w\|_{\mathcal{Y}_{T}^{\alpha}} \lesssim \|w_{0}\|_{H^{s, \phi}_{p}} + T^{\alpha - \frac{\alpha \kappa}{2} \left(\frac{d}{\delta p} - s\right)} \|w\|_{\mathcal{Y}_{T}^{\alpha}}^{\kappa + 1}.
\]
Moreover, from Lemma \ref{embedding theorem}, we have that
$$
L^{a}(\mathbb{R}^{d})\hookrightarrow H^{s^{*},\phi}_{p}(\mathbb{R}^{d}),\text{ }\frac{1}{(\kappa+1)a}=\frac{1}{p}-\frac{\delta s}{d},
$$
hence we obtain that
\begin{align*}
&\bigg\|\int_{0}^{t}(t-\tau)^{\alpha-1}\mathcal{P}_{\alpha,\phi}(t-\tau)\big(g(u(\tau))-g(w(\tau))\big)\,d\tau
\bigg\|_{H^{s,\phi}_{p}(\mathbb{R}^{d})}\\
&\lesssim \int_{0}^{t}(t-\tau)^{\alpha-1-\frac{\alpha(s-s^{*})}{2}}\big\|g(u(\tau))-g(w(\tau))\big\|_{H^{s^{*},\phi}_{p}}\,d\tau\\
&\lesssim\int_{0}^{t}(t-\tau)^{\alpha-1-\frac{\alpha(s-s^{*})}{2}}\big\|g(u(\tau))-g(w(\tau))\big\|_{L^{a}}\,d\tau\\
&\lesssim\int_{0}^{t}(t-\tau)^{\alpha-1-\frac{\alpha(s-s^{*})}{2}}\big((\|u(\tau)\|_{L^{a(\kappa+1)}}^{\kappa}
+\|w(\tau)\|_{L^{a(\kappa+1)}}^{\kappa})\|u(\tau)-w(\tau)\|_{L^{a(\kappa+1)}}\big)\,d\tau\\
&\lesssim\int_{0}^{t}(t-\tau)^{\alpha-1-\frac{\alpha(s-s^{*})}{2}}\big((\|u(\tau)\|_{H^{s,\phi}_{p}}^{\kappa}
+\|w(\tau)\|_{H^{s,\phi}_{p}}^{\kappa})\|u(\tau)-w(\tau)\|_{H^{s,\phi}_{p}}\big)\,d\tau\\
&\lesssim T^{\alpha - \frac{\alpha \kappa}{2} \left(\frac{d}{\delta p} - s\right)}\left( \|u\|^{\kappa}_{\mathcal{Y}_{T}^{\alpha}} + \|w\|^{\kappa}_{\mathcal{Y}_{T}^{\alpha}} \right) \|u - w\|_{\mathcal{Y}_{T}^{\alpha}},
\end{align*}
and similar
\begin{align*}
&\bigg\|\int_{0}^{t}(t-\tau)^{\alpha-1}\mathcal{P}_{\alpha,\phi}(t-\tau)\big(g(u(\tau))-g(w(\tau))\big)\,d\tau
\bigg\|_{H^{s,\phi}_{r}(\mathbb{R}^{d})}\\
&\lesssim\int_{0}^{t}(t-\tau)^{\alpha - 1 - \frac{\alpha}{2\delta}(\delta(s - s^{*}) - (\frac{d}{p} - \frac{d}{r}))} \big\|g(u(\tau))-g(w(\tau))\big\|_{H^{s^{*},\phi}_{p}}\,d\tau\\
&\lesssim\int_{0}^{t}(t-\tau)^{\alpha - 1 - \frac{\alpha}{2\delta}(\delta(s - s^{*}) - (\frac{d}{p} - \frac{d}{r}))} \big((\|u(\tau)\|_{H^{s,\phi}_{p}}^{\kappa}
+\|w(\tau)\|_{H^{s,\phi}_{p}}^{\kappa})\|u(\tau)-w(\tau)\|_{H^{s,\phi}_{p}}\big)\,d\tau\\
&\lesssim t^{-\frac{\alpha}{q}}T^{\alpha - \frac{\alpha \kappa}{2} \left(\frac{d}{\delta p} - s\right)}\left( \|u\|^{\kappa}_{\mathcal{Y}_{T}^{\alpha}} + \|w\|^{\kappa}_{\mathcal{Y}_{T}^{\alpha}} \right) \|u - w\|_{\mathcal{Y}_{T}^{\alpha}},
\end{align*}
this implies that
\[
\|\Theta u - \Theta w\|_{\mathcal{Y}_{T}^{\alpha}} \lesssim T^{\alpha - \frac{\alpha \kappa}{2} \left(\frac{d}{\delta p} - s\right)} \left( \|u\|^{\kappa}_{\mathcal{Y}_{T}^{\alpha}} + \|w\|^{\kappa}_{\mathcal{Y}_{T}^{\alpha}} \right) \|u - w\|_{\mathcal{Y}_{T}^{\alpha}}.
\]
The verification that \(\Theta w(\cdot) \in C\left([0, T]; H^{s, \phi}_{p}(\mathbb{R}^{d})\right) \cap C_{\alpha, q}\left((0, T]; H^{s, \phi}_{r}(\mathbb{R}^{d})\right)\) is similar to Theorem \ref{Local exist of mild solution}, and we omit the details here.

Note that \(w_{0} \in H^{s, \phi}_{p}(\mathbb{R}^{d})\), so there exists a constant \(C\) such that \(\|w_{0}\|_{H^{s, \phi}_{p}} \leq \frac{R}{2C}\). Consider the closed ball \(\mathcal{B}_{R}\) in \(\mathcal{Y}_{T}^{\alpha}\), i.e.,
\[
\left\{ w \in \mathcal{Y}_{T}^{\alpha} : \|w\|_{\mathcal{Y}_{T}^{\alpha}} \lesssim R \right\}.
\]
Similar to Theorem \ref{Local exist of mild solution}, there exists a constant \(T^{*} > 0\) such that the E.q. \eqref{E.Q.T.F.S.E} has a unique mild solution in \(\mathcal{Y}^{\alpha}_{T^{*}}\).
This completes the proof.
\end{proof}
\begin{theorem}\label{Local Well-posedness 2}
Let $1 < p < \infty$, and suppose that \textbf{Assumption H2} holds. If the constant $\kappa$ satisfies
$$
\frac{d\kappa}{\delta p(\kappa+1)} \leq s < \min\left\{\frac{d}{p\delta}, 2\right\},
$$
for $w_{0} \in H^{s,\phi}_{p}(\mathbb{R}^{d})$, then there exists a constant $T^{*} > 0$ such that the E.q. \eqref{E.Q.T.F.S.E} has a unique mild solution in $C\left([0, T^{*}]; H^{s,\phi}_{p}(\mathbb{R}^{d})\right)$. Moreover, for any \( T' \in (0, T^{\ast}) \), there exists a neighborhood \( V \) of \( w_{0} \) in the space \( H^{s, \phi}_{p} \) such that the mapping \( \tilde{w}_{0} \mapsto \tilde{w} \) from \( V \) to $C\left([0, T']; H^{s,\phi}_{p}(\mathbb{R}^{d})\right)$ is Lipschitz continuous.
\end{theorem}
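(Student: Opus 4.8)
The plan is to run a Banach fixed-point argument for the solution map $\Theta$ defined as in \eqref{operator formula}, but now directly inside the single space $\mathcal{Z}=C([0,T];H^{s,\phi}_{p}(\mathbb{R}^{d}))$ rather than the product-type space of Theorem \ref{Local Well-posedness 1}. The governing algebraic quantities are the auxiliary index $s^{*}:=(\kappa+1)s-\frac{\kappa d}{\delta p}$ and the regularity gain $\nu:=s-s^{*}=\kappa\big(\frac{d}{\delta p}-s\big)$. The hypothesis $s\geq\frac{d\kappa}{\delta p(\kappa+1)}$ forces $s^{*}\geq 0$ (with $s^{*}=0$ exactly at the left endpoint), while $s<\frac{d}{p\delta}$ forces $\nu\geq 0$; moreover $\nu$ is decreasing in $s$ with maximal value $\nu=\frac{d\kappa}{\delta p(\kappa+1)}<\min\{\frac{d}{p\delta},2\}$ attained at $s=\frac{d\kappa}{\delta p(\kappa+1)}$, so $\nu\in[0,2)$ throughout the admissible range. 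These two facts are exactly what make the argument close.

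First I would record the linear bound. Since $(I+\phi(\Delta))^{s/2}$ commutes with the Fourier multiplier $\mathcal{S}_{\alpha,\phi}(t)$ and, by Lemma \ref{M.L.Mittag-Leffer} with $\sigma=0$, $E_{\alpha,1}(-it^{\alpha}\phi(|\xi|^{2}))\in\mathcal{U}_{p}$ uniformly in $t$, one gets $\|\mathcal{S}_{\alpha,\phi}(t)w_{0}\|_{H^{s,\phi}_{p}}\lesssim\|w_{0}\|_{H^{s,\phi}_{p}}$. For the Duhamel term, invoke Ribaud \cite{Ribaud} (as in Theorem \ref{Local Well-posedness 1}) to obtain $g(w)\in H^{s^{*},\phi}_{p}$ with $\|g(w)\|_{H^{s^{*},\phi}_{p}}\lesssim\|w\|_{H^{s,\phi}_{p}}^{\kappa+1}$, and then feed this into Lemma \ref{operator P} in the diagonal case $r=p$, $\sigma=\nu$. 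Writing $(I+\phi(\Delta))^{s/2}t^{\alpha-1}\mathcal{P}_{\alpha,\phi}(t)g(w)=(I+\phi(\Delta))^{\nu/2}t^{\alpha-1}\mathcal{P}_{\alpha,\phi}(t)\big[(I+\phi(\Delta))^{s^{*}/2}g(w)\big]$ and using $\nu\in[0,2)$ gives $\|t^{\alpha-1}\mathcal{P}_{\alpha,\phi}(t)g(w)\|_{H^{s,\phi}_{p}}\lesssim t^{\alpha-1-\frac{\alpha\nu}{2}}\|g(w)\|_{H^{s^{*},\phi}_{p}}$. Because $\alpha-1-\frac{\alpha\nu}{2}>-1$, the time integral converges and produces the self-map estimate $\|\Theta w\|_{\mathcal{Z}}\lesssim\|w_{0}\|_{H^{s,\phi}_{p}}+T^{\alpha-\frac{\alpha\kappa}{2}(\frac{d}{\delta p}-s)}\|w\|_{\mathcal{Z}}^{\kappa+1}$ with a strictly positive power of $T$.

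The contraction estimate proceeds identically, but now requires the \emph{locally Lipschitz} property $\|g(u)-g(w)\|_{H^{s^{*},\phi}_{p}}\lesssim\big(\|u\|_{H^{s,\phi}_{p}}^{\kappa}+\|w\|_{H^{s,\phi}_{p}}^{\kappa}\big)\|u-w\|_{H^{s,\phi}_{p}}$, which I would again take from Ribaud \cite{Ribaud}; composing with Lemma \ref{operator P} as above yields $\|\Theta u-\Theta w\|_{\mathcal{Z}}\lesssim T^{\alpha-\frac{\alpha\kappa}{2}(\frac{d}{\delta p}-s)}\big(\|u\|_{\mathcal{Z}}^{\kappa}+\|w\|_{\mathcal{Z}}^{\kappa}\big)\|u-w\|_{\mathcal{Z}}$. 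Then one chooses $R$ with $\|w_{0}\|_{H^{s,\phi}_{p}}\leq R/(2C)$ and $T^{*}$ small so that $C(T^{*})^{\alpha-\frac{\alpha\kappa}{2}(\frac{d}{\delta p}-s)}R^{\kappa}<\tfrac12$, making $\Theta$ a self-contraction on the ball $\mathcal{B}_{R}\subset\mathcal{Z}$; its unique fixed point is the mild solution, and the two displayed estimates give the Lipschitz dependence $\tilde w_{0}\mapsto\tilde w$ on a neighbourhood $V$. The continuity $\Theta w\in C([0,T];H^{s,\phi}_{p})$ is verified exactly as in Theorem \ref{Local exist of mild solution}, by splitting $\Theta w(t_{2})-\Theta w(t_{1})$ into the linear difference and two Duhamel pieces and applying dominated convergence together with the same kernel bounds, so I would only sketch it.

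The main obstacle is the nonlinear difference estimate. In Theorem \ref{Local Well-posedness 1} one had $s^{*}\leq 0$, so $g(u)-g(w)$ could be controlled through the embedding $L^{a}\hookrightarrow H^{s^{*},\phi}_{p}$ of Lemma \ref{embedding theorem} combined with the purely pointwise bound in Assumption H2; here $s^{*}\geq 0$, and for $s$ strictly above the left endpoint ($s^{*}>0$) this crude embedding is unavailable in the needed direction. The positive-order Sobolev difference estimate for the nonlinearity must instead be supplied by Ribaud's framework, which demands more structure on $g$ than the bare Lipschitz bound of Assumption H2 (at the endpoint $s=\frac{d\kappa}{\delta p(\kappa+1)}$, where $s^{*}=0$, the estimate does reduce to controlling $g$ in $L^{p}$ via $H^{s,\phi}_{p}\hookrightarrow L^{(\kappa+1)p}$ and Assumption H2, which is the consistency check with the $s^{*}=0$ overlap). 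The secondary point to verify carefully is that $\nu=\kappa\big(\frac{d}{\delta p}-s\big)$ lies in $[0,2)$ so that Lemma \ref{operator P} applies and the time integral yields a positive power of $T$; this is precisely what the range $\frac{d\kappa}{\delta p(\kappa+1)}\leq s<\min\{\frac{d}{p\delta},2\}$ is designed to guarantee.
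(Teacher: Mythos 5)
Your linear estimate, your use of Ribaud's bound $\|g(w)\|_{H^{s^{*},\phi}_{p}}\lesssim\|w\|^{\kappa+1}_{H^{s,\phi}_{p}}$ for the self-map bound, and your exponent bookkeeping (the gain $\nu=s-s^{*}\in[0,2)$, the strictly positive power of $T$) all agree with the paper. The divergence --- and it is a genuine gap --- is in the contraction step. For $s^{*}>0$ you need the positive-order difference estimate $\|g(u)-g(w)\|_{H^{s^{*},\phi}_{p}}\lesssim\big(\|u\|^{\kappa}_{H^{s,\phi}_{p}}+\|w\|^{\kappa}_{H^{s,\phi}_{p}}\big)\|u-w\|_{H^{s,\phi}_{p}}$, and, as you yourself concede, this does not follow from Assumption H2: H2 is a purely pointwise Lipschitz-type bound, whereas difference estimates in Sobolev spaces of positive order require differentiability (e.g.\ H\"older continuity of $g'$); that structure is a \emph{hypothesis} of Ribaud-type composition theorems, not a consequence of them. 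So under the theorem's stated assumptions your fixed-point argument does not close --- you have correctly identified the obstacle but not removed it, which is precisely what the proof has to do.

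The paper removes it with no extra hypothesis on $g$ by sacrificing all spatial regularity on the difference: it estimates $g(u)-g(w)$ only in $L^{p}\sim H^{0,\phi}_{p}$, using the pointwise bound of H2, H\"older's inequality, and the embedding $H^{s,\phi}_{p}\hookrightarrow L^{(\kappa+1)p}$ (this is exactly where the lower bound $s\geq\frac{d\kappa}{\delta p(\kappa+1)}$ enters), and then lets the solution operator regain the full $s$ derivatives: by Lemma \ref{Sobolev estimates operator P} in the diagonal case $r=p$, $b-a=s$, one has
\begin{equation*}
\big\|(t-\tau)^{\alpha-1}\mathcal{P}_{\alpha,\phi}(t-\tau)\big(g(u(\tau))-g(w(\tau))\big)\big\|_{H^{s,\phi}_{p}}
\lesssim (t-\tau)^{\alpha-1-\frac{\alpha s}{2}}\big\|g(u(\tau))-g(w(\tau))\big\|_{L^{p}},
\end{equation*}
and the weight is admissible and time-integrable precisely because $s<2$. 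That is the true role of the hypothesis $s<\min\{\frac{d}{p\delta},2\}$, which you instead attributed to ensuring $\nu<2$ (automatic anyway, since $\nu\leq s$). Your own endpoint remark at $s^{*}=0$ is exactly this mechanism; what you missed is that it works on the whole range $\frac{d\kappa}{\delta p(\kappa+1)}\leq s<2$, not only at the endpoint, because one may always drop to $L^{p}$ and climb back up as long as fewer than two derivatives are needed.
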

\begin{proof}

Similar to Theorem \ref{Local Well-posedness 1}, we define the operator $\Theta$ as in Theorem \ref{operator formula} for any $T > 0$, and we consider the following Banach space:
$$
\left\{ w \in C\left([0, T]; H^{s, \phi}_{p}(\mathbb{R}^{d})\right) : \ \|w\|_{L^{\infty}_{T}H^{s,\phi}_{p}} \lesssim R \right\}.
$$

Let \( s^{*} = (\kappa + 1)s - \frac{\kappa d}{\delta p} \). By Ribaud \cite[Theorem 1.2]{Ribaud}, we have
$$
g(w) \in H^{s^{*}, \phi}_{p}(\mathbb{R}^{d}), \quad \text{and} \quad \|g(w)\|_{H^{s^{*}, \phi}_{p}} \leq C \|w\|_{H^{s, \phi}_{p}}^{\kappa + 1}.
$$

By Lemma \ref{operator S}, we obtain that
$$
\|\mathcal{S}_{\alpha, \phi}(t)w_{0}\|_{H^{s, \phi}_{p}} \lesssim \|w_{0}\|_{H^{s, \phi}_{p}}
$$
and
\begin{align*}
&\bigg\| \int_{0}^{t} (t - \tau)^{\alpha - 1} \mathcal{P}_{\alpha, \phi}(t - \tau) g(w(\tau)) \, d\tau \bigg\|_{L^{\infty}_{T}H^{s,\phi}_{p}} \\
&\lesssim \int_{0}^{t} (t - \tau)^{\alpha - 1 - \frac{\alpha}{2\delta}(\delta(s - s^{*}))} \|g(w(\tau))\|_{H^{s^{*}, \phi}_{p}} \, d\tau \\
&\lesssim T^{\alpha - \frac{\alpha}{2}\left(\frac{\kappa d}{\delta p} - \kappa s\right)} \|w\|_{L^{\infty}_{T}H^{s,\phi}_{p}}^{\kappa + 1}.
\end{align*}

Note that \( H^{s, \phi}_{p}(\mathbb{R}^{d}) \hookrightarrow H^{\frac{d\kappa}{\delta p(\delta + 1)}, \phi}_{p}(\mathbb{R}^{d}) \hookrightarrow L^{(\kappa + 1)p}(\mathbb{R}^{d}) \), and combining H\"{o}lder's inequality, we also have that
\begin{align*}
&\bigg\|\int_{0}^{t}(t-\tau)^{\alpha-1}\mathcal{P}_{\alpha,\phi}(t-\tau)\big(g(u(\tau))-g(w(\tau))\big)\,d\tau
\bigg\|_{H^{s,\phi}_{p}(\mathbb{R}^{d})}\\
&\lesssim \int_{0}^{t}(t-\tau)^{\alpha-1-\frac{\alpha s}{2}}\big((\|u(\tau)\|^{\kappa}_{L^{(\kappa+1)p}}+\|w(\tau)\|^{\kappa}_{L^{(\kappa+1)p}})
(\|u(\tau)-w(\tau)\|_{L^{(\kappa+1)p}})\big)\,d\tau\\
&\lesssim \int_{0}^{t}(t-\tau)^{\alpha-1-\frac{\alpha s}{2}}\big(\|u(\tau)\|^{\kappa}_{H^{s,\phi}_{p}}
+\|w(\tau)\|^{\kappa}_{H^{s,\phi}_{p}}\big)
\big(\|u(\tau)-w(\tau)\|_{H^{s,\phi}_{p}}\big)\,d\tau\\
&\lesssim T^{\alpha-\frac{\alpha s}{2}} \big(\|u(\tau)-w(\tau)\|_{L^{\infty}_{T}H^{s,\phi}_{p}}\big)\big(\|u(\tau)\|^{\kappa}_{L^{\infty}_{T}H^{s,\phi}_{p}}
+\|w(\tau)\|^{\kappa}_{L^{\infty}_{T}H^{s,\phi}_{p}}\big)
.
\end{align*}
Similar the Theorem \ref{Local Well-posedness 1}, the E.q. \eqref{E.Q.T.F.S.E} possesses a unique mild solution on $C\big([0,T^{*}];H^{s,\phi}_{p}(\mathbb{R}^{d})\big)$ for some $T^{*} > 0$.
This completes the proof.
\end{proof}

\section{Conclusion}
Based on Kim's study \cite[Adv. Math.]{Kim2} of the operator $\phi(-\Delta)$, this paper investigates the time-space fractional Schr\"{o}dinger E.q. \eqref{E.Q.T.F.S.E} driven by $\phi(-\Delta)$-type operators. This is a generalization of the equation \eqref{E.Q.T.F.S.E.1} driven by the fractional Laplacian operator $(-\Delta)^{\frac{\beta}{2}}$ studied by Su \cite[J. Math. Anal. Appl.]{Su} and others. Since the Bernstein function $\phi$ does not possess homogeneity, the method in \cite{Su} fails for E.q. \eqref{E.Q.T.F.S.E}. To overcome this difficulty, we first establish a fractional Gagliardo-Nirenberg inequality in the generalized Triebel-Lizorkin space $F^{s,\phi}_{p,q}$, and combine it with H\"{o}rmander multiplier theory, asymptotic properties of the Mittag-Leffler function, and properties of Bernstein functions. We then establish the $L^{p}-L^{r}$ estimate for the operators $\mathcal{S}_{\alpha,\phi}(t)$ and $\mathcal{P}_{\alpha,\phi}(t)$. In particular, when $\phi(-\Delta)=(-\Delta)^{\frac{\beta}{2}}$, it recovers the $L^{p}-L^{r}$ estimate established in \cite{Su}. By using real interpolation, Sobolev embedding, and other harmonic analysis methods, we prove the local/global well-posedness of mild solutions to  \eqref{E.Q.T.F.S.E} in some Banach space. This result complements Su's \cite{Su} result, and the method is entirely different.

Moreover, if the linear part considers the model proposed by Naber \cite{Naber}, then $i$ is replaced by $i^{\alpha}$. In this case, for $z=(-it)^{\alpha}\phi(|\xi|^{2})$, we observe that $|\arg(z)|=\alpha\pi/2$. From Proposition \ref{X.Z.of Mitag function}, the solution operators $\mathcal{S}_{\alpha,\phi}(t)$ and $\mathcal{P}_{\alpha,\phi}(t)$ will individually contain the unitary operator terms $\exp(-it(\phi(-\Delta))^{\frac{1}{\alpha}})$ and $(\phi(-\Delta))^{\frac{1-\alpha}{\alpha}}\exp(-it(\phi(-\Delta))^{\frac{1}{\alpha}})$. In this case, since the symbol does not exhibit decay, H\"{o}rmander multiplier theory will completely fail, and new methods need to be developed. In particular, when $\phi(-\Delta)=(-\Delta)^{\frac{\beta}{2}}$, we get the operator $\exp(-itD^{\delta})$ and $\exp(-itD^{\delta})$ and $D^{\delta-\beta}\exp(-itD^{\delta})$, where $\delta=\beta/\alpha$. Inspired by \cite{Kenig, Grande, Lee}, we also study the following well-posedness problem for the time-space fractional Schr\"{o}dinger equation \cite{YangZhou2026}:
\begin{align}
\begin{cases}
    i^{\alpha}\partial_{t}^{\alpha}w(t,x) = (-\Delta)^{\frac{\beta}{2}}w(t,x) + g(w(t,x)) & \text{in } (0,\infty) \times \mathbb{R}^{d}, \\
    w(0,x) = w_{0}(x) & \text{in } \mathbb{R}^{d},
\end{cases}
\end{align}

\noindent{\bf Declaration of competing interest}\\
The authors declare that they have no competing interests.\\
\noindent{\bf Data availability}\\
No data was used for the research described in the article.\\
\noindent{\bf Acknowledgements}\\
This work was supported by National Natural Science Foundation of China (12471172).

\end{document}